\newtheorem{theorem}{Theorem}
\theoremstyle{plain}
\newtheorem{axiom}{Axiom}
\newtheorem{conjecture}{Conjecture}
\newtheorem{definition}{Definition}
\newtheorem{example}{Example}
\newtheorem{exercise}{Exercise}
\newtheorem{proposition}{Proposition}
\newtheorem{remark}{Remark}
\numberwithin{equation}{section}
\def\diagram{\m@th\leftwidth=\z@ \rightwidth=\z@ \topheight=\z@
\botheight=\z@ \setbox\@picbox\hbox\bgroup}
\def\enddiagram{\egroup\wd\@picbox\rightwidth\unitlength
\ht\@picbox\topheight\unitlength \dp\@picbox\botheight\unitlength
\hskip\leftwidth\unitlength\box\@picbox}
\def\bfig{\begin{diagram}}
\def\efig{\end{diagram}}
\def\ratchet#1#2{\ifnum#1<#2 \global #1=#2 \fi}
\def\putbox(#1,#2)#3{%
\horsize{\wideness}{#3} \divide\wideness by 2
{\advance\wideness by #1 \ratchet{\rightwidth}{\wideness}}
{\advance\wideness by -#1 \ratchet{\leftwidth}{\wideness}}
\vertsize{\highness}{#3} \divide\highness by 2
{\advance\highness by #2 \ratchet{\topheight}{\highness}}
{\advance\highness by -#2 \ratchet{\botheight}{\highness}}
\put(#1,#2){\makebox(0,0){$#3$}}}
\def\putlbox(#1,#2)#3{%
\horsize{\wideness}{#3}
{\advance\wideness by #1 \ratchet{\rightwidth}{\wideness}}
{\ratchet{\leftwidth}{-#1}}
\vertsize{\highness}{#3} \divide\highness by 2
{\advance\highness by #2 \ratchet{\topheight}{\highness}}
{\advance\highness by -#2 \ratchet{\botheight}{\highness}}
\put(#1,#2){\makebox(0,0)[l]{$#3$}}}
\def\putrbox(#1,#2)#3{%
\horsize{\wideness}{#3}
{\ratchet{\rightwidth}{#1}}
{\advance\wideness by -#1 \ratchet{\leftwidth}{\wideness}}
\vertsize{\highness}{#3} \divide\highness by 2
{\advance\highness by #2 \ratchet{\topheight}{\highness}}
{\advance\highness by -#2 \ratchet{\botheight}{\highness}}
\put(#1,#2){\makebox(0,0)[r]{$#3$}}}
\def\adjust[#1]{} 
\newdimen\tempdimen
\newdimen\xlen
\newdimen\ylen
\newsavebox{\tempboxa}%
\newsavebox{\tempboxb}%
\newsavebox{\tempboxc}%
\newdimen\w@dth
\def\setw@dth#1#2{\setbox\z@\hbox{\m@th$#1$}\w@dth=\wd\z@
\setbox\@ne\hbox{\m@th$#2$}\ifnum\w@dth<\wd\@ne \w@dth=\wd\@ne \fi
\advance\w@dth by 1.2em}
\def\t@^#1_#2{\allowbreak\def\n@one{#1}\def\n@two{#2}\mathrel
{\setw@dth{#1}{#2}
\mathop{\hbox to \w@dth{\rightarrowfill}}\limits
\ifx\n@one\empty\else ^{\box\z@}\fi
\ifx\n@two\empty\else _{\box\@ne}\fi}}
\def\t@@^#1{\@ifnextchar_{\t@^{#1}}{\t@^{#1}_{}}}
\def\to{\@ifnextchar^{\t@@}{\t@@^{}}}
\def\t@left^#1_#2{\def\n@one{#1}\def\n@two{#2}\mathrel{\setw@dth{#1}{#2}
\mathop{\hbox to \w@dth{\leftarrowfill}}\limits
\ifx\n@one\empty\else ^{\box\z@}\fi
\ifx\n@two\empty\else _{\box\@ne}\fi}}
\def\t@@left^#1{\@ifnextchar_{\t@left^{#1}}{\t@left^{#1}_{}}}
\def\toleft{\@ifnextchar^{\t@@left}{\t@@left^{}}}
\def\two@^#1_#2{\allowbreak
\def\n@one{#1}\def\n@two{#2}\mathrel{\setw@dth{#1}{#2}
\mathop{\vcenter{\lineskip\z@\baselineskip\z@
                 \hbox to \w@dth{\rightarrowfill}%
                 \hbox to \w@dth{\rightarrowfill}}%
       }\limits
\ifx\n@one\empty\else ^{\box\z@}\fi
\ifx\n@two\empty\else _{\box\@ne}\fi}}
\def\tw@@^#1{\@ifnextchar _{\two@^{#1}}{\two@^{#1}_{}}}
\def\two{\@ifnextchar ^{\tw@@}{\tw@@^{}}}
\def\tofr@^#1_#2{\def\n@one{#1}\def\n@two{#2}\mathrel{\setw@dth{#1}{#2}
\mathop{\vcenter{\hbox to \w@dth{\rightarrowfill}\kern-1.7ex
                 \hbox to \w@dth{\leftarrowfill}}%
       }\limits
\ifx\n@one\empty\else ^{\box\z@}\fi
\ifx\n@two\empty\else _{\box\@ne}\fi}}
\def\t@fr@^#1{\@ifnextchar_ {\tofr@^{#1}}{\tofr@^{#1}_{}}}
\def\tofro{\@ifnextchar^ {\t@fr@}{\t@fr@^{}}}
\def\mon{\mathop{\m@th\hbox to
      14.6\P@{\lasyb\char'51\hskip-2.1\P@$\arrext$\hss
$\mathord\rightarrow$}}\limits} 
\def\leftmono{\mathrel{\m@th\hbox to
14.6\P@{$\mathord\leftarrow$\hss$\arrext$\hskip-2.1\P@\lasyb\char'50%
}}\limits} 
\mathchardef\arrext="0200       
\def\settypes(#1,#2,#3){\arrowtypea#1 \arrowtypeb#2 \arrowtypec#3}
\def\settoheight#1#2{\setbox\@tempboxa\hbox{#2}#1\ht\@tempboxa\relax}%
\def\settodepth#1#2{\setbox\@tempboxa\hbox{#2}#1\dp\@tempboxa\relax}%
\def\settokens`#1`#2`#3`#4`{%
     \def\tokena{#1}\def\tokenb{#2}\def\tokenc{#3}\def\tokend{#4}}
\def\setsqparms[#1`#2`#3`#4;#5`#6]{%
\arrowtypea #1
\arrowtypeb #2
\arrowtypec #3
\arrowtyped #4
\width #5
\height #6
}
\def\setpos(#1,#2){\xpos=#1 \ypos#2}
\def\settriparms[#1`#2`#3;#4]{\settripairparms[#1`#2`#3`1`1;#4]}%
\def\settripairparms[#1`#2`#3`#4`#5;#6]{%
\arrowtypea #1
\arrowtypeb #2
\arrowtypec #3
\arrowtyped #4
\arrowtypee #5
\width #6
\height #6
}
\def\resetparms{\settripairparms[1`1`1`1`1;500]\width 500}
\def\mvector(#1,#2)#3{
\put(0,0){\vector(#1,#2){#3}}%
\put(0,0){\vector(#1,#2){26}}%
}
\def\evector(#1,#2)#3{{
\arrowlength #3
\put(0,0){\vector(#1,#2){\arrowlength}}%
\advance \arrowlength by-30
\put(0,0){\vector(#1,#2){\arrowlength}}%
}}
\def\horsize#1#2{%
\settowidth{\tempdimen}{$#2$}%
#1=\tempdimen
\divide #1 by\unitlength
}
\def\vertsize#1#2{%
\settoheight{\tempdimen}{$#2$}%
#1=\tempdimen
\settodepth{\tempdimen}{$#2$}%
\advance #1 by\tempdimen
\divide #1 by\unitlength
}
\def\putvector(#1,#2)(#3,#4)#5#6{{%
\ifnum3<\arrowtype
\putdashvector(#1,#2)(#3,#4)#5\arrowtype
\else
\ifnum\arrowtype<-3
\putdashvector(#1,#2)(#3,#4)#5\arrowtype
\else
\xpos=#1
\ypos=#2
\run=#3
\rise=#4
\arrowlength=#5
\ifnum \arrowtype<0
    \ifnum \run=0
        \advance \ypos by-\arrowlength
    \else
        \tempcounta \arrowlength
        \multiply \tempcounta by\rise
        \divide \tempcounta by\run
        \ifnum\run>0
            \advance \xpos by\arrowlength
            \advance \ypos by\tempcounta
        \else
            \advance \xpos by-\arrowlength
            \advance \ypos by-\tempcounta
        \fi
    \fi
    \multiply \arrowtype by-1
    \multiply \rise by-1
    \multiply \run by-1
\fi
\ifcase \arrowtype
\or \put(\xpos,\ypos){\vector(\run,\rise){\arrowlength}}%
\or \put(\xpos,\ypos){\mvector(\run,\rise)\arrowlength}%
\or \put(\xpos,\ypos){\evector(\run,\rise){\arrowlength}}%
\fi\fi\fi
}}
\def\putsplitvector(#1,#2)#3#4{
\xpos #1
\ypos #2
\arrowtype #4
\halflength #3
\arrowlength #3
\gap 140
\advance \halflength by-\gap
\divide \halflength by2
\ifnum\arrowtype>0
   \ifcase \arrowtype
   \or \put(\xpos,\ypos){\line(0,-1){\halflength}}%
       \advance\ypos by-\halflength
       \advance\ypos by-\gap
       \put(\xpos,\ypos){\vector(0,-1){\halflength}}%
   \or \put(\xpos,\ypos){\line(0,-1)\halflength}%
       \put(\xpos,\ypos){\vector(0,-1)3}%
       \advance\ypos by-\halflength
       \advance\ypos by-\gap
       \put(\xpos,\ypos){\vector(0,-1){\halflength}}%
   \or \put(\xpos,\ypos){\line(0,-1)\halflength}%
       \advance\ypos by-\halflength
       \advance\ypos by-\gap
       \put(\xpos,\ypos){\evector(0,-1){\halflength}}%
   \fi
\else \arrowtype=-\arrowtype
   \ifcase\arrowtype
   \or \advance \ypos by-\arrowlength
       \put(\xpos,\ypos){\line(0,1){\halflength}}%
       \advance\ypos by\halflength
       \advance\ypos by\gap
       \put(\xpos,\ypos){\vector(0,1){\halflength}}%
   \or \advance \ypos by-\arrowlength
       \put(\xpos,\ypos){\line(0,1)\halflength}%
       \put(\xpos,\ypos){\vector(0,1)3}%
       \advance\ypos by\halflength
       \advance\ypos by\gap
       \put(\xpos,\ypos){\vector(0,1){\halflength}}%
   \or \advance \ypos by-\arrowlength
       \put(\xpos,\ypos){\line(0,1)\halflength}%
       \advance\ypos by\halflength
       \advance\ypos by\gap
       \put(\xpos,\ypos){\evector(0,1){\halflength}}%
   \fi
\fi
}
\def\putmorphism(#1)(#2,#3)[#4`#5`#6]#7#8#9{{%
\run #2
\rise #3
\ifnum\rise=0
  \puthmorphism(#1)[#4`#5`#6]{#7}{#8}#9%
\else\ifnum\run=0
  \putvmorphism(#1)[#4`#5`#6]{#7}{#8}#9%
\else
\setpos(#1)%
\arrowlength #7
\arrowtype #8
\ifnum\run=0
\else\ifnum\rise=0
\else
\ifnum\run>0
    \coefa=1
\else
   \coefa=-1
\fi
\ifnum\arrowtype>0
   \coefb=0
   \coefc=-1
\else
   \coefb=\coefa
   \coefc=1
   \arrowtype=-\arrowtype
\fi
\width=2
\multiply \width by\run
\divide \width by\rise
\ifnum \width<0  \width=-\width\fi
\advance\width by60
\if l#9 \width=-\width\fi
\putbox(\xpos,\ypos){#4}
{\multiply \coefa by\arrowlength
\advance\xpos by\coefa
\multiply \coefa by\rise
\divide \coefa by\run
\advance \ypos by\coefa
\putbox(\xpos,\ypos){#5} }%
{\multiply \coefa by\arrowlength
\divide \coefa by2
\advance \xpos by\coefa
\advance \xpos by\width
\multiply \coefa by\rise
\divide \coefa by\run
\advance \ypos by\coefa
\if l#9%
   \putrbox(\xpos,\ypos){#6}%
\else\if r#9%
   \putlbox(\xpos,\ypos){#6}%
\fi\fi }%
{\multiply \rise by-\coefc
\multiply \run by-\coefc
\multiply \coefb by\arrowlength
\advance \xpos by\coefb
\multiply \coefb by\rise
\divide \coefb by\run
\advance \ypos by\coefb
\multiply \coefc by70
\advance \ypos by\coefc
\multiply \coefc by\run
\divide \coefc by\rise
\advance \xpos by\coefc
\multiply \coefa by140
\multiply \coefa by\run
\divide \coefa by\rise
\advance \arrowlength by\coefa
\ifcase\arrowtype
\or \put(\xpos,\ypos){\vector(\run,\rise){\arrowlength}}%
\or \put(\xpos,\ypos){\mvector(\run,\rise){\arrowlength}}%
\or \put(\xpos,\ypos){\evector(\run,\rise){\arrowlength}}%
\fi}\fi\fi\fi\fi}}
\def\howmanydashes{
\numbdashes=\arrowlength \lengthdash=40
\divide\numbdashes by \lengthdash
\lengthdash=\arrowlength
\divide\lengthdash by \numbdashes
\increment=\lengthdash
\multiply\lengthdash by 3
\divide\lengthdash by 5
}
\def\putdashvector(#1)(#2,#3)#4#5{%
\ifnum#3=0 \putdashhvector(#1){#4}#5
\else
\ifnum#2=0
\putdashvvector(#1){#4}#5\fi\fi}
\def\putdashhvector(#1,#2)#3#4{{%
\arrowlength=#3 \howmanydashes
\multiput(#1,#2)(\increment,0){\numbdashes}%
{\vrule height .4pt width \lengthdash\unitlength}
\arrowtype=#4 \xpos=#1
\ifnum\arrowtype<0 \advance\arrowtype by 7 \fi
\ifcase\arrowtype
\or \advance\xpos by 10
    \put(\xpos,#2){\vector(-1,0){\lengthdash}}
    \advance\xpos by 40
    \put(\xpos,#2){\vector(-1,0){\lengthdash}}
\or \advance \xpos by 10
    \put(\xpos,#2){\vector(-1,0){\lengthdash}}
    \advance\xpos by  \arrowlength
    \advance\xpos by  -50
    \put(\xpos,#2){\vector(-1,0){\lengthdash}}
\or \advance\xpos by 10
    \put(\xpos,#2){\vector(-1,0){\lengthdash}}
\or \advance\xpos by \arrowlength
    \advance\xpos by -\lengthdash
    \put(\xpos,#2){\vector(1,0){\lengthdash}}
\or {\advance\xpos by 10
    \put(\xpos,#2){\vector(1,0){\lengthdash}}}
    \advance\xpos by \arrowlength
    \advance\xpos by -\lengthdash
    \put(\xpos,#2){\vector(1,0){\lengthdash}}
\or \advance\xpos by \arrowlength
    \advance\xpos by -\lengthdash
    \put(\xpos,#2){\vector(1,0){\lengthdash}}
    \advance\xpos by -40
    \put(\xpos,#2){\vector(1,0){\lengthdash}}
   \fi
}}
\def\putdashvvector(#1,#2)#3#4{{%
\arrowlength=#3 \howmanydashes
\ypos=#2 \advance\ypos by -\arrowlength
\multiput(#1,#2)(0,\increment){\numbdashes}%
    {\vrule width .4pt height \lengthdash\unitlength}
\arrowtype=#4 \ypos=#2
\ifnum\arrowtype<0 \advance\arrowtype by 7 \fi
\ifcase\arrowtype
\or \advance\ypos by \arrowlength \advance\ypos by -40
    \put(#1,\ypos){\vector(0,1){\lengthdash}}
    \advance\ypos by -40
    \put(#1,\ypos){\vector(0,1){\lengthdash}}
\or \advance\ypos by 10
    \put(#1,\ypos){\vector(0,1){\lengthdash}}
    \advance\ypos by \arrowlength \advance\ypos by -40
    \put(#1,\ypos){\vector(0,1){\lengthdash}}
\or \advance\ypos by \arrowlength \advance\ypos by -40
    \put(#1,\ypos){\vector(0,1){\lengthdash}}
\or \advance\ypos by 10
    \put(#1,\ypos){\vector(0,-1){\lengthdash}}
\or \advance\ypos by 10
    \put(#1,\ypos){\vector(0,-1){\lengthdash}}
    \advance\ypos by \arrowlength \advance\ypos by -40
    \put(#1,\ypos){\vector(0,-1){\lengthdash}}
\or \advance\ypos by 10
    \put(#1,\ypos){\vector(0,-1){\lengthdash}}
    \advance\ypos by 40
    \put(#1,\ypos){\vector(0,-1){\lengthdash}}
\fi
}}
\def\puthmorphism(#1,#2)[#3`#4`#5]#6#7#8{{%
\xpos #1
\ypos #2
\width #6
\arrowlength #6
\arrowtype=#7
\putbox(\xpos,\ypos){#3\vphantom{#4}}%
{\advance \xpos by\arrowlength
\putbox(\xpos,\ypos){\vphantom{#3}#4}}%
\horsize{\tempcounta}{#3}%
\horsize{\tempcountb}{#4}%
\divide \tempcounta by2
\divide \tempcountb by2
\advance \tempcounta by30
\advance \tempcountb by30
\advance \xpos by\tempcounta
\advance \arrowlength by-\tempcounta
\advance \arrowlength by-\tempcountb
\putvector(\xpos,\ypos)(1,0)\arrowlength\arrowtype
\divide \arrowlength by2
\advance \xpos by\arrowlength
\vertsize{\tempcounta}{#5}%
\divide\tempcounta by2
\advance \tempcounta by20
\if a#8 %
   \advance \ypos by\tempcounta
   \putbox(\xpos,\ypos){#5}%
\else
   \advance \ypos by-\tempcounta
   \putbox(\xpos,\ypos){#5}%
\fi}}
\def\putvmorphism(#1,#2)[#3`#4`#5]#6#7#8{{%
\xpos #1
\ypos #2
\arrowlength #6
\arrowtype #7
\settowidth{\xlen}{$#5$}%
\putbox(\xpos,\ypos){#3}%
{\advance \ypos by-\arrowlength
\putbox(\xpos,\ypos){#4}}%
{\advance\arrowlength by-140
\advance \ypos by-70
\ifdim\xlen>0pt
   \if m#8%
      \putsplitvector(\xpos,\ypos)\arrowlength\arrowtype
   \else
   \putvector(\xpos,\ypos)(0,-1)\arrowlength\arrowtype
   \fi
\else
   \putvector(\xpos,\ypos)(0,-1)\arrowlength\arrowtype
\fi}%
\ifdim\xlen>0pt
   \divide \arrowlength by2
   \advance\ypos by-\arrowlength
   \if l#8%
      \advance \xpos by-40
      \putrbox(\xpos,\ypos){#5}%
   \else\if r#8%
      \advance \xpos by40
      \putlbox(\xpos,\ypos){#5}%
   \else
      \putbox(\xpos,\ypos){#5}%
   \fi\fi
\fi
}}
\def\putsquarep<#1>(#2)[#3;#4`#5`#6`#7]{{%
\setsqparms[#1]%
\setpos(#2)%
\settokens`#3`%
\puthmorphism(\xpos,\ypos)[\tokenc`\tokend`{#7}]{\width}{\arrowtyped}b%
\advance\ypos by \height
\puthmorphism(\xpos,\ypos)[\tokena`\tokenb`{#4}]{\width}{\arrowtypea}a%
\putvmorphism(\xpos,\ypos)[``{#5}]{\height}{\arrowtypeb}l%
\advance\xpos by \width
\putvmorphism(\xpos,\ypos)[``{#6}]{\height}{\arrowtypec}r%
}}
\def\putsquare{\@ifnextchar <{\putsquarep}{\putsquarep%
   <\arrowtypea`\arrowtypeb`\arrowtypec`\arrowtyped;\width`\height>}}
\def\square{\@ifnextchar< {\squarep}{\squarep
   <\arrowtypea`\arrowtypeb`\arrowtypec`\arrowtyped;\width`\height>}}
\def\squarep<#1>[#2`#3`#4`#5;#6`#7`#8`#9]{{
\setsqparms[#1]
\diagram
\putsquarep<\arrowtypea`\arrowtypeb`\arrowtypec`
\arrowtyped;\width`\height>
(0,0)[#2`#3`#4`{#5};#6`#7`#8`{#9}]
\enddiagram
}}                                                 
\def\putptrianglep<#1>(#2,#3)[#4`#5`#6;#7`#8`#9]{{%
\settriparms[#1]%
\xpos=#2 \ypos=#3
\advance\ypos by \height
\puthmorphism(\xpos,\ypos)[#4`#5`{#7}]{\height}{\arrowtypea}a%
\putvmorphism(\xpos,\ypos)[`#6`{#8}]{\height}{\arrowtypeb}l%
\advance\xpos by\height
\putmorphism(\xpos,\ypos)(-1,-1)[``{#9}]{\height}{\arrowtypec}r%
}}
\def\putptriangle{\@ifnextchar <{\putptrianglep}{\putptrianglep
   <\arrowtypea`\arrowtypeb`\arrowtypec;\height>}}
\def\ptriangle{\@ifnextchar <{\ptrianglep}{\ptrianglep
   <\arrowtypea`\arrowtypeb`\arrowtypec;\height>}}
\def\ptrianglep<#1>[#2`#3`#4;#5`#6`#7]{{
\settriparms[#1]
\diagram
\putptrianglep<\arrowtypea`\arrowtypeb`
\arrowtypec;\height>
(0,0)[#2`#3`#4;#5`#6`{#7}]
\enddiagram
}}                                            
\def\putqtrianglep<#1>(#2,#3)[#4`#5`#6;#7`#8`#9]{{%
\settriparms[#1]%
\xpos=#2 \ypos=#3
\advance\ypos by\height
\puthmorphism(\xpos,\ypos)[#4`#5`{#7}]{\height}{\arrowtypea}a%
\putmorphism(\xpos,\ypos)(1,-1)[``{#8}]{\height}{\arrowtypeb}l%
\advance\xpos by\height
\putvmorphism(\xpos,\ypos)[`#6`{#9}]{\height}{\arrowtypec}r%
}}
\def\putqtriangle{\@ifnextchar <{\putqtrianglep}{\putqtrianglep
   <\arrowtypea`\arrowtypeb`\arrowtypec;\height>}}
\def\qtriangle{\@ifnextchar <{\qtrianglep}{\qtrianglep
   <\arrowtypea`\arrowtypeb`\arrowtypec;\height>}}
\def\qtrianglep<#1>[#2`#3`#4;#5`#6`#7]{{
\settriparms[#1]
\width=\height                                
\diagram
\putqtrianglep<\arrowtypea`\arrowtypeb`
\arrowtypec;\height>
(0,0)[#2`#3`#4;#5`#6`{#7}]
\enddiagram
}}
\def\putdtrianglep<#1>(#2,#3)[#4`#5`#6;#7`#8`#9]{{%
\settriparms[#1]%
\xpos=#2 \ypos=#3
\puthmorphism(\xpos,\ypos)[#5`#6`{#9}]{\height}{\arrowtypec}b%
\advance\xpos by \height \advance\ypos by\height
\putmorphism(\xpos,\ypos)(-1,-1)[``{#7}]{\height}{\arrowtypea}l%
\putvmorphism(\xpos,\ypos)[#4``{#8}]{\height}{\arrowtypeb}r%
}}
\def\putdtriangle{\@ifnextchar <{\putdtrianglep}{\putdtrianglep
   <\arrowtypea`\arrowtypeb`\arrowtypec;\height>}}
\def\dtriangle{\@ifnextchar <{\dtrianglep}{\dtrianglep
   <\arrowtypea`\arrowtypeb`\arrowtypec;\height>}}
\def\dtrianglep<#1>[#2`#3`#4;#5`#6`#7]{{
\settriparms[#1]
\width=\height                                
\diagram
\putdtrianglep<\arrowtypea`\arrowtypeb`
\arrowtypec;\height>
(0,0)[#2`#3`#4;#5`#6`{#7}]
\enddiagram
}}
\def\putbtrianglep<#1>(#2,#3)[#4`#5`#6;#7`#8`#9]{{%
\settriparms[#1]%
\xpos=#2 \ypos=#3
\puthmorphism(\xpos,\ypos)[#5`#6`{#9}]{\height}{\arrowtypec}b%
\advance\ypos by\height
\putmorphism(\xpos,\ypos)(1,-1)[``{#8}]{\height}{\arrowtypeb}r%
\putvmorphism(\xpos,\ypos)[#4``{#7}]{\height}{\arrowtypea}l%
}}
\def\putbtriangle{\@ifnextchar <{\putbtrianglep}{\putbtrianglep
   <\arrowtypea`\arrowtypeb`\arrowtypec;\height>}}
\def\btriangle{\@ifnextchar <{\btrianglep}{\btrianglep
   <\arrowtypea`\arrowtypeb`\arrowtypec;\height>}}
\def\btrianglep<#1>[#2`#3`#4;#5`#6`#7]{{
\settriparms[#1]
\width=\height                               
\diagram
\putbtrianglep<\arrowtypea`\arrowtypeb`
\arrowtypec;\height>
(0,0)[#2`#3`#4;#5`#6`{#7}]
\enddiagram
}}
\def\putAtrianglep<#1>(#2,#3)[#4`#5`#6;#7`#8`#9]{{%
\settriparms[#1]%
\xpos=#2 \ypos=#3
{\multiply \height by2
\puthmorphism(\xpos,\ypos)[#5`#6`{#9}]{\height}{\arrowtypec}b}%
\advance\xpos by\height \advance\ypos by\height
\putmorphism(\xpos,\ypos)(-1,-1)[#4``{#7}]{\height}{\arrowtypea}l%
\putmorphism(\xpos,\ypos)(1,-1)[``{#8}]{\height}{\arrowtypeb}r%
}}
\def\putAtriangle{\@ifnextchar <{\putAtrianglep}{\putAtrianglep
   <\arrowtypea`\arrowtypeb`\arrowtypec;\height>}}
\def\Atriangle{\@ifnextchar <{\Atrianglep}{\Atrianglep
   <\arrowtypea`\arrowtypeb`\arrowtypec;\height>}}
\def\Atrianglep<#1>[#2`#3`#4;#5`#6`#7]{{
\settriparms[#1]
\width=\height                                     
\diagram
\putAtrianglep<\arrowtypea`\arrowtypeb`
\arrowtypec;\height>
(0,0)[#2`#3`#4;#5`#6`{#7}]
\enddiagram
}}
\def\putAtrianglepairp<#1>(#2)[#3;#4`#5`#6`#7`#8]{{%
\settripairparms[#1]%
\setpos(#2)%
\settokens`#3`%
\puthmorphism(\xpos,\ypos)[\tokenb`\tokenc`{#7}]{\height}{\arrowtyped}b%
\advance\xpos by\height
\puthmorphism(\xpos,\ypos)[\phantom{\tokenc}`\tokend`{#8}]%
{\height}{\arrowtypee}b%
\advance\ypos by\height
\putmorphism(\xpos,\ypos)(-1,-1)[\tokena``{#4}]{\height}{\arrowtypea}l%
\putvmorphism(\xpos,\ypos)[``{#5}]{\height}{\arrowtypeb}m%
\putmorphism(\xpos,\ypos)(1,-1)[``{#6}]{\height}{\arrowtypec}r%
}}
\def\putAtrianglepair{\@ifnextchar <{\putAtrianglepairp}{\putAtrianglepairp%
   <\arrowtypea`\arrowtypeb`\arrowtypec`\arrowtyped`\arrowtypee;\height>}}
\def\Atrianglepair{\@ifnextchar <{\Atrianglepairp}{\Atrianglepairp%
   <\arrowtypea`\arrowtypeb`\arrowtypec`\arrowtyped`\arrowtypee;\height>}}
\def\Atrianglepairp<#1>[#2;#3`#4`#5`#6`#7]{{
\settripairparms[#1]
\settokens`#2`
\width=\height                                
\diagram
\putAtrianglepairp                            
<\arrowtypea`\arrowtypeb`\arrowtypec`
\arrowtyped`\arrowtypee;\height>
(0,0)[{#2};#3`#4`#5`#6`{#7}]
\enddiagram
}}
\def\putVtrianglep<#1>(#2,#3)[#4`#5`#6;#7`#8`#9]{{%
\settriparms[#1]%
\xpos=#2 \ypos=#3
\advance\ypos by\height
{\multiply\height by2
\puthmorphism(\xpos,\ypos)[#4`#5`{#7}]{\height}{\arrowtypea}a}%
\putmorphism(\xpos,\ypos)(1,-1)[`#6`{#8}]{\height}{\arrowtypeb}l%
\advance\xpos by\height
\advance\xpos by\height
\putmorphism(\xpos,\ypos)(-1,-1)[``{#9}]{\height}{\arrowtypec}r%
}}
\def\putVtriangle{\@ifnextchar <{\putVtrianglep}{\putVtrianglep
   <\arrowtypea`\arrowtypeb`\arrowtypec;\height>}}
\def\Vtriangle{\@ifnextchar <{\Vtrianglep}{\Vtrianglep
   <\arrowtypea`\arrowtypeb`\arrowtypec;\height>}}
\def\Vtrianglep<#1>[#2`#3`#4;#5`#6`#7]{{
\settriparms[#1]
\width=\height                                 
\diagram
\putVtrianglep<\arrowtypea`\arrowtypeb`
\arrowtypec;\height>
(0,0)[#2`#3`#4;#5`#6`{#7}]
\enddiagram
}}
\def\putVtrianglepairp<#1>(#2)[#3;#4`#5`#6`#7`#8]{{
\settripairparms[#1]%
\setpos(#2)%
\settokens`#3`%
\advance\ypos by\height
\putmorphism(\xpos,\ypos)(1,-1)[`\tokend`{#6}]{\height}{\arrowtypec}l%
\puthmorphism(\xpos,\ypos)[\tokena`\tokenb`{#4}]{\height}{\arrowtypea}a%
\advance\xpos by\height
\puthmorphism(\xpos,\ypos)[\phantom{\tokenb}`\tokenc`{#5}]%
{\height}{\arrowtypeb}a%
\putvmorphism(\xpos,\ypos)[``{#7}]{\height}{\arrowtyped}m%
\advance\xpos by\height
\putmorphism(\xpos,\ypos)(-1,-1)[``{#8}]{\height}{\arrowtypee}r%
}}
\def\putVtrianglepair{\@ifnextchar <{\putVtrianglepairp}{\putVtrianglepairp%
    <\arrowtypea`\arrowtypeb`\arrowtypec`\arrowtyped`\arrowtypee;\height>}}
\def\Vtrianglepair{\@ifnextchar <{\Vtrianglepairp}{\Vtrianglepairp%
    <\arrowtypea`\arrowtypeb`\arrowtypec`\arrowtyped`\arrowtypee;\height>}}
\def\Vtrianglepairp<#1>[#2;#3`#4`#5`#6`#7]{{
\settripairparms[#1]
\settokens`#2`
\diagram
\putVtrianglepairp                             
<\arrowtypea`\arrowtypeb`\arrowtypec`
\arrowtyped`\arrowtypee;\height>
(0,0)[{#2};#3`#4`#5`#6`{#7}]
\enddiagram
}}
\def\putCtrianglep<#1>(#2,#3)[#4`#5`#6;#7`#8`#9]{{%
\settriparms[#1]%
\xpos=#2 \ypos=#3
\advance\ypos by\height
\putmorphism(\xpos,\ypos)(1,-1)[``{#9}]{\height}{\arrowtypec}l%
\advance\xpos by\height
\advance\ypos by\height
\putmorphism(\xpos,\ypos)(-1,-1)[#4`#5`{#7}]{\height}{\arrowtypea}l%
{\multiply\height by 2
\putvmorphism(\xpos,\ypos)[`#6`{#8}]{\height}{\arrowtypeb}r}%
}}
\def\putCtriangle{\@ifnextchar <{\putCtrianglep}{\putCtrianglep
    <\arrowtypea`\arrowtypeb`\arrowtypec;\height>}}
\def\Ctriangle{\@ifnextchar <{\Ctrianglep}{\Ctrianglep
    <\arrowtypea`\arrowtypeb`\arrowtypec;\height>}}
\def\Ctrianglep<#1>[#2`#3`#4;#5`#6`#7]{{
\settriparms[#1]
\width=\height                               
\diagram
\putCtrianglep<\arrowtypea`\arrowtypeb`
\arrowtypec;\height>
(0,0)[#2`#3`#4;#5`#6`{#7}]
\enddiagram
}}                                           
\def\putDtrianglep<#1>(#2,#3)[#4`#5`#6;#7`#8`#9]{{%
\settriparms[#1]%
\xpos=#2 \ypos=#3
\advance\xpos by\height \advance\ypos by\height
\putmorphism(\xpos,\ypos)(-1,-1)[``{#9}]{\height}{\arrowtypec}r%
\advance\xpos by-\height \advance\ypos by\height
\putmorphism(\xpos,\ypos)(1,-1)[`#5`{#8}]{\height}{\arrowtypeb}r%
{\multiply\height by 2
\putvmorphism(\xpos,\ypos)[#4`#6`{#7}]{\height}{\arrowtypea}l}%
}}
\def\putDtriangle{\@ifnextchar <{\putDtrianglep}{\putDtrianglep
    <\arrowtypea`\arrowtypeb`\arrowtypec;\height>}}
\def\Dtriangle{\@ifnextchar <{\Dtrianglep}{\Dtrianglep
   <\arrowtypea`\arrowtypeb`\arrowtypec;\height>}}
\def\Dtrianglep<#1>[#2`#3`#4;#5`#6`#7]{{
\settriparms[#1]
\width=\height                              
\diagram
\putDtrianglep<\arrowtypea`\arrowtypeb`
\arrowtypec;\height>
(0,0)[#2`#3`#4;#5`#6`{#7}]
\enddiagram
}}                                          
\def\setrecparms[#1`#2]{\width=#1 \height=#2}%
\def\recursep<#1`#2>[#3;#4`#5`#6`#7`#8]{{\m@th
\width=#1 \height=#2
\settokens`#3`
\settowidth{\tempdimen}{$\tokena$}
\ifdim\tempdimen=0pt
  \savebox{\tempboxa}{\hbox{$\tokenb$}}%
  \savebox{\tempboxb}{\hbox{$\tokend$}}%
  \savebox{\tempboxc}{\hbox{$#6$}}%
\else
  \savebox{\tempboxa}{\hbox{$\hbox{$\tokena$}\times\hbox{$\tokenb$}$}}%
  \savebox{\tempboxb}{\hbox{$\hbox{$\tokena$}\times\hbox{$\tokend$}$}}%
  \savebox{\tempboxc}{\hbox{$\hbox{$\tokena$}\times\hbox{$#6$}$}}%
\fi
\ypos=\height
\divide\ypos by 2
\xpos=\ypos
\advance\xpos by \width
\bfig
\putCtrianglep<-1`1`1;\ypos>(0,0)[`\tokenc`;#5`#6`{#7}]%
\puthmorphism(\ypos,0)[\tokend`\usebox{\tempboxb}`{#8}]{\width}{-1}b%
\puthmorphism(\ypos,\height)[\tokenb`\usebox{\tempboxa}`{#4}]{\width}{-1}a%
\advance\ypos by \width
\putvmorphism(\ypos,\height)[``\usebox{\tempboxc}]{\height}1r%
\efig
}}
\def\recurse{\@ifnextchar <{\recursep}{\recursep<\width`\height>}}
\def\puttwohmorphisms(#1,#2)[#3`#4;#5`#6]#7#8#9{{%
%
\puthmorphism(#1,#2)[#3`#4`]{#7}0a
\ypos=#2
\advance\ypos by 20
\puthmorphism(#1,\ypos)[\phantom{#3}`\phantom{#4}`#5]{#7}{#8}a
\advance\ypos by -40
\puthmorphism(#1,\ypos)[\phantom{#3}`\phantom{#4}`#6]{#7}{#9}b
}}
\def\puttwovmorphisms(#1,#2)[#3`#4;#5`#6]#7#8#9{{%
%
%
\putvmorphism(#1,#2)[#3`#4`]{#7}0a
\xpos=#1
\advance\xpos by -20
\putvmorphism(\xpos,#2)[\phantom{#3}`\phantom{#4}`#5]{#7}{#8}l
\advance\xpos by 40
\putvmorphism(\xpos,#2)[\phantom{#3}`\phantom{#4}`#6]{#7}{#9}r
}}
\def\puthcoequalizer(#1)[#2`#3`#4;#5`#6`#7]#8#9{{%
%
\setpos(#1)%
\puttwohmorphisms(\xpos,\ypos)[#2`#3;#5`#6]{#8}11%
\advance\xpos by #8
\puthmorphism(\xpos,\ypos)[\phantom{#3}`#4`#7]{#8}1{#9}
}}
\def\putvcoequalizer(#1)[#2`#3`#4;#5`#6`#7]#8#9{{%
%
%
\setpos(#1)%
\puttwovmorphisms(\xpos,\ypos)[#2`#3;#5`#6]{#8}11%
\advance\ypos by -#8
\putvmorphism(\xpos,\ypos)[\phantom{#3}`#4`#7]{#8}1{#9}
}}
\def\putthreehmorphisms(#1)[#2`#3;#4`#5`#6]#7(#8)#9{{%
\setpos(#1) \settypes(#8)
\if a#9 %
     \vertsize{\tempcounta}{#5}%
     \vertsize{\tempcountb}{#6}%
     \ifnum \tempcounta<\tempcountb \tempcounta=\tempcountb \fi
\else
     \vertsize{\tempcounta}{#4}%
     \vertsize{\tempcountb}{#5}%
     \ifnum \tempcounta<\tempcountb \tempcounta=\tempcountb \fi
\fi
\advance \tempcounta by 60
\puthmorphism(\xpos,\ypos)[#2`#3`#5]{#7}{\arrowtypeb}{#9}
\advance\ypos by \tempcounta
\puthmorphism(\xpos,\ypos)[\phantom{#2}`\phantom{#3}`#4]{#7}{\arrowtypea}{#9}
\advance\ypos by -\tempcounta \advance\ypos by -\tempcounta
\puthmorphism(\xpos,\ypos)[\phantom{#2}`\phantom{#3}`#6]{#7}{\arrowtypec}{#9}
}}
\def\setarrowtoks[#1`#2`#3`#4`#5`#6]{%
\def\toka{#1}
\def\tokb{#2}
\def\tokc{#3}
\def\tokd{#4}
\def\toke{#5}
\def\tokf{#6}
}
\def\hex{\@ifnextchar <{\hexp}{\hexp<1000`400>}}
\def\hexp<#1`#2>[#3`#4`#5`#6`#7`#8;#9]{%
\setarrowtoks[#9]
\yext=#2 \advance \yext by #2
\xext=#1 \advance\xext by \yext
\bfig
\putCtriangle<-1`0`1;#2>(0,0)[`#5`;\tokb``\tokd]
\xext=#1 \yext=#2 \advance \yext by #2
\putsquare<1`0`0`1;\xext`\yext>(#2,0)[#3`#4`#7`#8;\toka```\tokf]
\advance \xext by #2
\putDtriangle<0`1`-1;#2>(\xext,0)[`#6`;`\tokc`\toke]
\efig
}
\let\pdfoutput=\undefined\fi
\chardef\@x10\chardef\@xv60
\def\tcitime{
\def\@time{%
  \@minute\time\@hour\@minute\divide\@hour\@xv
  \ifnum\@hour<\@x 0\fi\the\@hour:%
  \multiply\@hour\@xv\advance\@minute-\@hour
  \ifnum\@minute<\@x 0\fi\the\@minute
  }}%
\def\x@hyperref#1#2#3{%
   \catcode`\~ = 12
   \catcode`\$ = 12
   \catcode`\_ = 12
   \catcode`\# = 12
   \catcode`\& = 12
   \catcode`\% = 12
   \y@hyperref{#1}{#2}{#3}%
}
\def\y@hyperref#1#2#3#4{%
   #2\ref{#4}#3
   \catcode`\~ = 13
   \catcode`\$ = 3
   \catcode`\_ = 8
   \catcode`\# = 6
   \catcode`\& = 4
   \catcode`\% = 14
}
\def\QCTOpt[#1]#2{%
  \def\QCTOptB{#1}
  \def\QCTOptA{#2}
}
\def\QCTNOpt#1{%
  \def\QCTOptA{#1}
  \let\QCTOptB\empty
}
\def\Qct{%
  \@ifnextchar[{%
    \QCTOpt}{\QCTNOpt}
}
\def\QCBOpt[#1]#2{%
  \def\QCBOptB{#1}%
  \def\QCBOptA{#2}%
}
\def\QCBNOpt#1{%
  \def\QCBOptA{#1}%
  \let\QCBOptB\empty
}
\def\Qcb{%
  \@ifnextchar[{%
    \QCBOpt}{\QCBNOpt}%
}
\def\PrepCapArgs{%
  \ifx\QCBOptA\empty
    \ifx\QCTOptA\empty
      {}%
    \else
      \ifx\QCTOptB\empty
        {\QCTOptA}%
      \else
        [\QCTOptB]{\QCTOptA}%
      \fi
    \fi
  \else
    \ifx\QCBOptA\empty
      {}%
    \else
      \ifx\QCBOptB\empty
        {\QCBOptA}%
      \else
        [\QCBOptB]{\QCBOptA}%
      \fi
    \fi
  \fi
}
\def\GRAPHICSPS#1{%
 \ifcase\GRAPHICSTYPE
   \special{ps: #1}%
 \or
   \special{language "PS", include "#1"}%
 \fi
}%
\def\graffile#1#2#3#4{%
    \bgroup
	   \@inlabelfalse
       \leavevmode
       \@ifundefined{bbl@deactivate}{\def~{\string~}}{\activesoff}%
        \raise -#4 \BOXTHEFRAME{%
           \hbox to #2{\raise #3\hbox to #2{\null #1\hfil}}}%
    \egroup
}%
\def\draftbox#1#2#3#4{%
 \leavevmode\raise -#4 \hbox{%
  \frame{\rlap{\protect\tiny #1}\hbox to #2%
   {\vrule height#3 width\z@ depth\z@\hfil}%
  }%
 }%
}%
\let\nographics=\@msidraft
\newif\ifwasdraft
\def\GRAPHIC#1#2#3#4#5{%
   \ifnum\@msidraft=\@ne\draftbox{#2}{#3}{#4}{#5}%
   \else\graffile{#1}{#3}{#4}{#5}%
   \fi
}
\def\addtoLaTeXparams#1{%
    \edef\LaTeXparams{\LaTeXparams #1}}%
\newif\ifBoxFrame \BoxFramefalse
\newif\ifOverFrame \OverFramefalse
\newif\ifUnderFrame \UnderFramefalse
\def\BOXTHEFRAME#1{%
   \hbox{%
      \ifBoxFrame
         \frame{#1}%
      \else
         {#1}%
      \fi
   }%
}
\def\doFRAMEparams#1{\BoxFramefalse\OverFramefalse\UnderFramefalse\readFRAMEparams#1\end}%
\def\readFRAMEparams#1{%
 \ifx#1\end%
  \let\next=\relax
  \else
  \ifx#1i\dispkind=\z@\fi
  \ifx#1d\dispkind=\@ne\fi
  \ifx#1f\dispkind=\tw@\fi
  \ifx#1t\addtoLaTeXparams{t}\fi
  \ifx#1b\addtoLaTeXparams{b}\fi
  \ifx#1p\addtoLaTeXparams{p}\fi
  \ifx#1h\addtoLaTeXparams{h}\fi
  \ifx#1X\BoxFrametrue\fi
  \ifx#1O\OverFrametrue\fi
  \ifx#1U\UnderFrametrue\fi
  \ifx#1w
    \ifnum\@msidraft=1\wasdrafttrue\else\wasdraftfalse\fi
    \@msidraft=\@ne
  \fi
  \let\next=\readFRAMEparams
  \fi
 \next
 }%
\def\IFRAME#1#2#3#4#5#6{%
      \bgroup
      \let\QCTOptA\empty
      \let\QCTOptB\empty
      \let\QCBOptA\empty
      \let\QCBOptB\empty
      #6%
      \parindent=0pt
      \leftskip=0pt
      \rightskip=0pt
      \setbox0=\hbox{\QCBOptA}%
      \@tempdima=#1\relax
      \ifOverFrame
          \typeout{This is not implemented yet}%
          \show\HELP
      \else
         \ifdim\wd0>\@tempdima
            \advance\@tempdima by \@tempdima
            \ifdim\wd0 >\@tempdima
               \setbox1 =\vbox{%
                  \unskip\hbox to \@tempdima{\hfill\GRAPHIC{#5}{#4}{#1}{#2}{#3}\hfill}%
                  \unskip\hbox to \@tempdima{\parbox[b]{\@tempdima}{\QCBOptA}}%
               }%
               \wd1=\@tempdima
            \else
               \textwidth=\wd0
               \setbox1 =\vbox{%
                 \noindent\hbox to \wd0{\hfill\GRAPHIC{#5}{#4}{#1}{#2}{#3}\hfill}\\%
                 \noindent\hbox{\QCBOptA}%
               }%
               \wd1=\wd0
            \fi
         \else
            \ifdim\wd0>0pt
              \hsize=\@tempdima
              \setbox1=\vbox{%
                \unskip\GRAPHIC{#5}{#4}{#1}{#2}{0pt}%
                \break
                \unskip\hbox to \@tempdima{\hfill \QCBOptA\hfill}%
              }%
              \wd1=\@tempdima
           \else
              \hsize=\@tempdima
              \setbox1=\vbox{%
                \unskip\GRAPHIC{#5}{#4}{#1}{#2}{0pt}%
              }%
              \wd1=\@tempdima
           \fi
         \fi
         \@tempdimb=\ht1
         \advance\@tempdimb by -#2
         \advance\@tempdimb by #3
         \leavevmode
         \raise -\@tempdimb \hbox{\box1}%
      \fi
      \egroup%
}%
\def\DFRAME#1#2#3#4#5{%
  \vspace\topsep
  \hfil\break
  \bgroup
     \leftskip\@flushglue
	 \rightskip\@flushglue
	 \parindent\z@
	 \parfillskip\z@skip
     \let\QCTOptA\empty
     \let\QCTOptB\empty
     \let\QCBOptA\empty
     \let\QCBOptB\empty
	 \vbox\bgroup
        \ifOverFrame 
           #5\QCTOptA\par
        \fi
        \GRAPHIC{#4}{#3}{#1}{#2}{\z@}%
        \ifUnderFrame 
           \break#5\QCBOptA
        \fi
	 \egroup
  \egroup
  \vspace\topsep
  \break
}%
\def\FFRAME#1#2#3#4#5#6#7{%
  \@ifundefined{floatstyle}
    {
     \begin{figure}[#1]%
    }
    {
	 \ifx#1h
      \begin{figure}[H]%
	 \else
      \begin{figure}[#1]%
	 \fi
	}
  \let\QCTOptA\empty
  \let\QCTOptB\empty
  \let\QCBOptA\empty
  \let\QCBOptB\empty
  \ifOverFrame
    #4
    \ifx\QCTOptA\empty
    \else
      \ifx\QCTOptB\empty
        \caption{\QCTOptA}%
      \else
        \caption[\QCTOptB]{\QCTOptA}%
      \fi
    \fi
    \ifUnderFrame\else
      \label{#5}%
    \fi
  \else
    \UnderFrametrue%
  \fi
  \begin{center}\GRAPHIC{#7}{#6}{#2}{#3}{\z@}\end{center}%
  \ifUnderFrame
    #4
    \ifx\QCBOptA\empty
      \caption{}%
    \else
      \ifx\QCBOptB\empty
        \caption{\QCBOptA}%
      \else
        \caption[\QCBOptB]{\QCBOptA}%
      \fi
    \fi
    \label{#5}%
  \fi
  \end{figure}%
 }%
\def\makeactives{
  \catcode`\"=\active
  \catcode`\;=\active
  \catcode`\:=\active
  \catcode`\'=\active
  \catcode`\~=\active
}
   \gdef\activesoff{%
      \def"{\string"}%
      \def;{\string;}%
      \def:{\string:}%
      \def'{\string'}%
      \def~{\string~}%
    }
\def\FRAME#1#2#3#4#5#6#7#8{%
 \bgroup
 \ifnum\@msidraft=\@ne
   \wasdrafttrue
 \else
   \wasdraftfalse%
 \fi
 \def\LaTeXparams{}%
 \dispkind=\z@
 \def\LaTeXparams{}%
 \doFRAMEparams{#1}%
 \ifnum\dispkind=\z@\IFRAME{#2}{#3}{#4}{#7}{#8}{#5}\else
  \ifnum\dispkind=\@ne\DFRAME{#2}{#3}{#7}{#8}{#5}\else
   \ifnum\dispkind=\tw@
    \edef\@tempa{\noexpand\FFRAME{\LaTeXparams}}%
    \@tempa{#2}{#3}{#5}{#6}{#7}{#8}%
    \fi
   \fi
  \fi
  \ifwasdraft\@msidraft=1\else\@msidraft=0\fi{}%
  \egroup
 }%
\def\TEXUX#1{"texux"}
\def\limfunc#1{\mathop{\rm #1}}%
\def\func#1{\mathop{\rm #1}\nolimits}%
\long\def\QQQ#1#2{%
     \long\expandafter\def\csname#1\endcsname{#2}}%
\long\def\QQA#1#2{}%
\def\QTR#1#2{{\csname#1\endcsname {#2}}}%
\def\EXPAND#1[#2]#3{}%
\def\NOEXPAND#1[#2]#3{}%
\def\LaTeXparent#1{}%
\def\ChildStyles#1{}%
\def\ChildDefaults#1{}%
\def\QTagDef#1#2#3{}%
  \providecommand{\UNICODE}[2][]{\protect\rule{.1in}{.1in}}
  \providecommand{\U}[1]{\protect\rule{.1in}{.1in}}
\def\QQfnmark#1{\footnotemark}
 \def\abstract{%
  \if@twocolumn
   \section*{Abstract (Not appropriate in this style!)}%
   \else \small 
   \begin{center}{\bf Abstract\vspace{-.5em}\vspace{\z@}}\end{center}%
   \quotation 
   \fi
  }%
   \def\registered{\relax\ifmmode{}\r@gistered
                    \else$\m@th\r@gistered$\fi}%
 \def\r@gistered{^{\ooalign
  {\hfil\raise.07ex\hbox{$\scriptstyle\rm\text{R}$}\hfil\crcr
  \mathhexbox20D}}}}{}%
\newdimen\theight
\def\newfmtname{LaTeX2e}
  \DeclareOldFontCommand{\rm}{\normalfont\rmfamily}{\mathrm}
  \DeclareOldFontCommand{\sf}{\normalfont\sffamily}{\mathsf}
  \DeclareOldFontCommand{\tt}{\normalfont\ttfamily}{\mathtt}
  \DeclareOldFontCommand{\bf}{\normalfont\bfseries}{\mathbf}
  \DeclareOldFontCommand{\it}{\normalfont\itshape}{\mathit}
  \DeclareOldFontCommand{\sl}{\normalfont\slshape}{\@nomath\sl}
  \DeclareOldFontCommand{\sc}{\normalfont\scshape}{\@nomath\sc}
\def\alpha{{\Greekmath 010B}}%
\def\beta{{\Greekmath 010C}}%
\def\gamma{{\Greekmath 010D}}%
\def\delta{{\Greekmath 010E}}%
\def\epsilon{{\Greekmath 010F}}%
\def\zeta{{\Greekmath 0110}}%
\def\eta{{\Greekmath 0111}}%
\def\theta{{\Greekmath 0112}}%
\def\iota{{\Greekmath 0113}}%
\def\kappa{{\Greekmath 0114}}%
\def\lambda{{\Greekmath 0115}}%
\def\mu{{\Greekmath 0116}}%
\def\nu{{\Greekmath 0117}}%
\def\xi{{\Greekmath 0118}}%
\def\pi{{\Greekmath 0119}}%
\def\rho{{\Greekmath 011A}}%
\def\sigma{{\Greekmath 011B}}%
\def\tau{{\Greekmath 011C}}%
\def\upsilon{{\Greekmath 011D}}%
\def\phi{{\Greekmath 011E}}%
\def\chi{{\Greekmath 011F}}%
\def\psi{{\Greekmath 0120}}%
\def\omega{{\Greekmath 0121}}%
\def\varepsilon{{\Greekmath 0122}}%
\def\vartheta{{\Greekmath 0123}}%
\def\varpi{{\Greekmath 0124}}%
\def\varrho{{\Greekmath 0125}}%
\def\varsigma{{\Greekmath 0126}}%
\def\varphi{{\Greekmath 0127}}%
\def\nabla{{\Greekmath 0272}}
\def\FindBoldGroup{%
   {\setbox0=\hbox{$\mathbf{x\global\edef\theboldgroup{\the\mathgroup}}$}}%
}
\def\Greekmath#1#2#3#4{%
    \if@compatibility
        \ifnum\mathgroup=\symbold
           \mathchoice{\mbox{\boldmath$\displaystyle\mathchar"#1#2#3#4$}}%
                      {\mbox{\boldmath$\textstyle\mathchar"#1#2#3#4$}}%
                      {\mbox{\boldmath$\scriptstyle\mathchar"#1#2#3#4$}}%
                      {\mbox{\boldmath$\scriptscriptstyle\mathchar"#1#2#3#4$}}%
        \else
           \mathchar"#1#2#3#4%
        \fi 
    \else 
        \FindBoldGroup
        \ifnum\mathgroup=\theboldgroup 
           \mathchoice{\mbox{\boldmath$\displaystyle\mathchar"#1#2#3#4$}}%
                      {\mbox{\boldmath$\textstyle\mathchar"#1#2#3#4$}}%
                      {\mbox{\boldmath$\scriptstyle\mathchar"#1#2#3#4$}}%
                      {\mbox{\boldmath$\scriptscriptstyle\mathchar"#1#2#3#4$}}%
        \else
           \mathchar"#1#2#3#4%
        \fi     	    
	  \fi}
\newif\ifGreekBold  \GreekBoldfalse
\let\SAVEPBF=\pbf
\def\pbf{\GreekBoldtrue\SAVEPBF}%
  \newcounter{equationnumber}  
  \def\mathletters{%
     \addtocounter{equation}{1}
     \edef\@currentlabel{\theequation}%
     \setcounter{equationnumber}{\c@equation}
     \setcounter{equation}{0}%
     \edef\theequation{\@currentlabel\noexpand\alph{equation}}%
  }
    \def\BibTeX{{\rm B\kern-.05em{\sc i\kern-.025em b}\kern-.08em
                 T\kern-.1667em\lower.7ex\hbox{E}\kern-.125emX}}}{}%
\def\AmS{{\protect\usefont{OMS}{cmsy}{m}{n}%
                A\kern-.1667em\lower.5ex\hbox{M}\kern-.125emS}}}{}%
\def\@@eqncr{\let\@tempa\relax
    \ifcase\@eqcnt \def\@tempa{& & &}\or \def\@tempa{& &}%
      \else \def\@tempa{&}\fi
     \@tempa
     \if@eqnsw
        \iftag@
           \@taggnum
        \else
           \@eqnnum\stepcounter{equation}%
        \fi
     \fi
     \global\tag@false
     \global\@eqnswtrue
     \global\@eqcnt\z@\cr}
\def\TCItag{\@ifnextchar*{\@TCItagstar}{\@TCItag}}
\def\@TCItag#1{%
    \global\tag@true
    \global\def\@taggnum{(#1)}%
    \global\def\@currentlabel{#1}}
\def\@TCItagstar*#1{%
    \global\tag@true
    \global\def\@taggnum{#1}%
    \global\def\@currentlabel{#1}}
\def\tint{\msi@int\textstyle\int}%
\def\tiint{\msi@int\textstyle\iint}%
\def\tiiint{\msi@int\textstyle\iiint}%
\def\tiiiint{\msi@int\textstyle\iiiint}%
\def\tidotsint{\msi@int\textstyle\idotsint}%
\def\toint{\msi@int\textstyle\oint}%
\def\tsum{\mathop{\textstyle \sum }}%
\newtoks\temptoksa
\newtoks\temptoksb
\newtoks\temptoksc
\def\msi@int#1#2{%
 \def\@temp{{#1#2\the\temptoksc_{\the\temptoksa}^{\the\temptoksb}}}%
 \futurelet\@nextcs
 \@int
}
\def\@int{%
   \ifx\@nextcs\limits
      \typeout{Found limits}%
      \temptoksc={\limits}%
	  \let\@next\@intgobble%
   \else\ifx\@nextcs\nolimits
      \typeout{Found nolimits}%
      \temptoksc={\nolimits}%
	  \let\@next\@intgobble%
   \else
      \typeout{Did not find limits or no limits}%
      \temptoksc={}%
      \let\@next\msi@limits%
   \fi\fi
   \@next   
}%
\def\@intgobble#1{%
   \typeout{arg is #1}%
   \msi@limits
}
\def\msi@limits{%
   \temptoksa={}%
   \temptoksb={}%
   \@ifnextchar_{\@limitsa}{\@limitsb}%
}
\def\@limitsa_#1{%
   \temptoksa={#1}%
   \@ifnextchar^{\@limitsc}{\@temp}%
}
\def\@limitsb{%
   \@ifnextchar^{\@limitsc}{\@temp}%
}
\def\@limitsc^#1{%
   \temptoksb={#1}%
   \@ifnextchar_{\@limitsd}{\@temp}%
}
\def\@limitsd_#1{%
   \temptoksa={#1}%
   \@temp
}
\def\dint{\msi@int\displaystyle\int}%
\def\diint{\msi@int\displaystyle\iint}%
\def\diiint{\msi@int\displaystyle\iiint}%
\def\diiiint{\msi@int\displaystyle\iiiint}%
\def\didotsint{\msi@int\displaystyle\idotsint}%
\def\doint{\msi@int\displaystyle\oint}%
\def\dsum{\mathop{\displaystyle \sum }}%
\def\dbigwedge{\mathop{\displaystyle \bigwedge }}%
\def\ExitTCILatex{\makeatother }
\if@compatibility\message{amsmath already loaded}\fi\aftergroup\ExitTCILatex}
\if@compatibility\message{amstex already loaded}\fi\aftergroup\ExitTCILatex}
\if@compatibility\message{amsgen already loaded}\fi\aftergroup\ExitTCILatex}
\let\DOTSI\relax
\def\RIfM@{\relax\ifmmode}%
\def\FN@{\futurelet\next}%
\def\iint{\DOTSI\intno@\tw@\FN@\ints@}%
\def\iiint{\DOTSI\intno@\thr@@\FN@\ints@}%
\def\iiiint{\DOTSI\intno@4 \FN@\ints@}%
\def\idotsint{\DOTSI\intno@\z@\FN@\ints@}%
\def\ints@{\findlimits@\ints@@}%
\newif\iflimtoken@
\newif\iflimits@
\def\findlimits@{\limtoken@true\ifx\next\limits\limits@true
 \else\ifx\next\nolimits\limits@false\else
 \limtoken@false\ifx\ilimits@\nolimits\limits@false\else
 \ifinner\limits@false\else\limits@true\fi\fi\fi\fi}%
\def\multint@{\int\ifnum\intno@=\z@\intdots@                          
 \else\intkern@\fi                                                    
 \ifnum\intno@>\tw@\int\intkern@\fi                                   
 \ifnum\intno@>\thr@@\int\intkern@\fi                                 
 \int}
\def\multintlimits@{\intop\ifnum\intno@=\z@\intdots@\else\intkern@\fi
 \ifnum\intno@>\tw@\intop\intkern@\fi
 \ifnum\intno@>\thr@@\intop\intkern@\fi\intop}%
\def\intic@{%
    \mathchoice{\hskip.5em}{\hskip.4em}{\hskip.4em}{\hskip.4em}}%
\def\negintic@{\mathchoice
 {\hskip-.5em}{\hskip-.4em}{\hskip-.4em}{\hskip-.4em}}%
\def\ints@@{\iflimtoken@                                              
 \def\ints@@@{\iflimits@\negintic@
   \mathop{\intic@\multintlimits@}\limits                             
  \else\multint@\nolimits\fi                                          
  \eat@}
 \else                                                                
 \def\ints@@@{\iflimits@\negintic@
  \mathop{\intic@\multintlimits@}\limits\else
  \multint@\nolimits\fi}\fi\ints@@@}%
\def\intkern@{\mathchoice{\!\!\!}{\!\!}{\!\!}{\!\!}}%
\def\plaincdots@{\mathinner{\cdotp\cdotp\cdotp}}%
\def\intdots@{\mathchoice{\plaincdots@}%
 {{\cdotp}\mkern1.5mu{\cdotp}\mkern1.5mu{\cdotp}}%
 {{\cdotp}\mkern1mu{\cdotp}\mkern1mu{\cdotp}}%
 {{\cdotp}\mkern1mu{\cdotp}\mkern1mu{\cdotp}}}%
\def\RIfM@{\relax\protect\ifmmode}
\def\text{\RIfM@\expandafter\text@\else\expandafter\mbox\fi}
\let\nfss@text\text
\def\text@#1{\mathchoice
   {\textdef@\displaystyle\f@size{#1}}%
   {\textdef@\textstyle\tf@size{\firstchoice@false #1}}%
   {\textdef@\textstyle\sf@size{\firstchoice@false #1}}%
   {\textdef@\textstyle \ssf@size{\firstchoice@false #1}}%
   \glb@settings}
\def\textdef@#1#2#3{\hbox{{%
                    \everymath{#1}%
                    \let\f@size#2\selectfont
                    #3}}}
\newif\iffirstchoice@
\def\Let@{\relax\iffalse{\fi\let\\=\cr\iffalse}\fi}%
\def\vspace@{\def\vspace##1{\crcr\noalign{\vskip##1\relax}}}%
\def\multilimits@{\bgroup\vspace@\Let@
 \baselineskip\fontdimen10 \scriptfont\tw@
 \advance\baselineskip\fontdimen12 \scriptfont\tw@
 \lineskip\thr@@\fontdimen8 \scriptfont\thr@@
 \lineskiplimit\lineskip
 \vbox\bgroup\ialign\bgroup\hfil$\m@th\scriptstyle{##}$\hfil\crcr}%
\def\Sb{_\multilimits@}%
\def\endSb{\crcr\egroup\egroup\egroup}%
\def\Sp{^\multilimits@}%
\newdimen\ex@
\def\rightarrowfill@#1{$#1\m@th\mathord-\mkern-6mu\cleaders
 \hbox{$#1\mkern-2mu\mathord-\mkern-2mu$}\hfill
 \mkern-6mu\mathord\rightarrow$}%
\def\leftarrowfill@#1{$#1\m@th\mathord\leftarrow\mkern-6mu\cleaders
 \hbox{$#1\mkern-2mu\mathord-\mkern-2mu$}\hfill\mkern-6mu\mathord-$}%
\def\leftrightarrowfill@#1{$#1\m@th\mathord\leftarrow
\mkern-6mu\cleaders
 \hbox{$#1\mkern-2mu\mathord-\mkern-2mu$}\hfill
 \mkern-6mu\mathord\rightarrow$}%
\def\overrightarrow{\mathpalette\overrightarrow@}%
\def\overrightarrow@#1#2{\vbox{\ialign{##\crcr\rightarrowfill@#1\crcr
 \noalign{\kern-\ex@\nointerlineskip}$\m@th\hfil#1#2\hfil$\crcr}}}%
\def\overleftarrow{\mathpalette\overleftarrow@}%
\def\overleftarrow@#1#2{\vbox{\ialign{##\crcr\leftarrowfill@#1\crcr
 \noalign{\kern-\ex@\nointerlineskip}$\m@th\hfil#1#2\hfil$\crcr}}}%
\def\overleftrightarrow{\mathpalette\overleftrightarrow@}%
\def\overleftrightarrow@#1#2{\vbox{\ialign{##\crcr
   \leftrightarrowfill@#1\crcr
 \noalign{\kern-\ex@\nointerlineskip}$\m@th\hfil#1#2\hfil$\crcr}}}%
\def\underrightarrow{\mathpalette\underrightarrow@}%
\def\underrightarrow@#1#2{\vtop{\ialign{##\crcr$\m@th\hfil#1#2\hfil
  $\crcr\noalign{\nointerlineskip}\rightarrowfill@#1\crcr}}}%
\def\underleftarrow{\mathpalette\underleftarrow@}%
\def\underleftarrow@#1#2{\vtop{\ialign{##\crcr$\m@th\hfil#1#2\hfil
  $\crcr\noalign{\nointerlineskip}\leftarrowfill@#1\crcr}}}%
\def\underleftrightarrow{\mathpalette\underleftrightarrow@}%
\def\underleftrightarrow@#1#2{\vtop{\ialign{##\crcr$\m@th
  \hfil#1#2\hfil$\crcr
 \noalign{\nointerlineskip}\leftrightarrowfill@#1\crcr}}}%
\def\qopnamewl@#1{\mathop{\operator@font#1}\nlimits@}
\let\nlimits@\displaylimits
\def\setboxz@h{\setbox\z@\hbox}
\def\varlim@#1#2{\mathop{\vtop{\ialign{##\crcr
 \hfil$#1\m@th\operator@font lim$\hfil\crcr
 \noalign{\nointerlineskip}#2#1\crcr
 \noalign{\nointerlineskip\kern-\ex@}\crcr}}}}
 \def\rightarrowfill@#1{\m@th\setboxz@h{$#1-$}\ht\z@\z@
  $#1\copy\z@\mkern-6mu\cleaders
  \hbox{$#1\mkern-2mu\box\z@\mkern-2mu$}\hfill
  \mkern-6mu\mathord\rightarrow$}
\def\leftarrowfill@#1{\m@th\setboxz@h{$#1-$}\ht\z@\z@
  $#1\mathord\leftarrow\mkern-6mu\cleaders
  \hbox{$#1\mkern-2mu\copy\z@\mkern-2mu$}\hfill
  \mkern-6mu\box\z@$}
\def\projlim{\qopnamewl@{proj\,lim}}
\def\injlim{\qopnamewl@{inj\,lim}}
\def\varinjlim{\mathpalette\varlim@\rightarrowfill@}
\def\varprojlim{\mathpalette\varlim@\leftarrowfill@}
\def\varliminf{\mathpalette\varliminf@{}}
\def\varliminf@#1{\mathop{\underline{\vrule\@depth.2\ex@\@width\z@
   \hbox{$#1\m@th\operator@font lim$}}}}
\def\varlimsup{\mathpalette\varlimsup@{}}
\def\varlimsup@#1{\mathop{\overline
  {\hbox{$#1\m@th\operator@font lim$}}}}
\def\align{\@verbatim \frenchspacing\@vobeyspaces \@alignverbatim
You are using the "align" environment in a style in which it is not defined.}
\let\csname endalign*\endcsname =\endtrivlist
\def\alignat{\@verbatim \frenchspacing\@vobeyspaces \@alignatverbatim
You are using the "alignat" environment in a style in which it is not defined.}
\let\csname endalignat*\endcsname =\endtrivlist
\def\xalignat{\@verbatim \frenchspacing\@vobeyspaces \@xalignatverbatim
You are using the "xalignat" environment in a style in which it is not defined.}
\let\csname endxalignat*\endcsname =\endtrivlist
\def\gather{\@verbatim \frenchspacing\@vobeyspaces \@gatherverbatim
You are using the "gather" environment in a style in which it is not defined.}
\let\csname endgather*\endcsname =\endtrivlist
\def\multiline{\@verbatim \frenchspacing\@vobeyspaces \@multilineverbatim
You are using the "multiline" environment in a style in which it is not defined.}
\let\csname endmultiline*\endcsname =\endtrivlist
\def\arrax{\@verbatim \frenchspacing\@vobeyspaces \@arraxverbatim
You are using a type of "array" construct that is only allowed in AmS-LaTeX.}
\def\tabulax{\@verbatim \frenchspacing\@vobeyspaces \@tabulaxverbatim
You are using a type of "tabular" construct that is only allowed in AmS-LaTeX.}
\let\csname endarrax*\endcsname =\endtrivlist
\let\csname endtabulax*\endcsname =\endtrivlist
 \def\endequation{%
     \ifmmode\ifinner 
      \iftag@
        \addtocounter{equation}{-1} 
        $\hfil
           \displaywidth\linewidth\@taggnum\egroup \endtrivlist
        \global\tag@false
        \global\@ignoretrue   
      \else
        $\hfil
           \displaywidth\linewidth\@eqnnum\egroup \endtrivlist
        \global\tag@false
        \global\@ignoretrue 
      \fi
     \else   
      \iftag@
        \addtocounter{equation}{-1} 
        \eqno \hbox{\@taggnum}
        \global\tag@false%
        $$\global\@ignoretrue
      \else
        \eqno \hbox{\@eqnnum}
        $$\global\@ignoretrue
      \fi
     \fi\fi
 } 
 \newif\iftag@ \tag@false
 \def\TCItag{\@ifnextchar*{\@TCItagstar}{\@TCItag}}
 \def\@TCItag#1{%
     \global\tag@true
     \global\def\@taggnum{(#1)}%
     \global\def\@currentlabel{#1}}
 \def\@TCItagstar*#1{%
     \global\tag@true
     \global\def\@taggnum{#1}%
     \global\def\@currentlabel{#1}}
     \def\tag{\@ifnextchar*{\@tagstar}{\@tag}}
     \def\@tag#1{%
         \global\tag@true
         \global\def\@taggnum{(#1)}}
     \def\@tagstar*#1{%
         \global\tag@true
         \global\def\@taggnum{#1}}
\begin{document}
\title[Some Exotic Characteristic Homomorphism for Lie Algebroids]{Some
Exotic Characteristic Homomorphism for\\
Lie Algebroids}
\author{Bogdan Balcerzak}
\address{Institute of Mathematics, Technical University of \L \'{o}d\'{z},
ul. W\'{o}lcza\'{n}ska 215, 90-924 \L \'{o}d\'{z}, Poland}
\email[B. Balcerzak]{ bogdan.balcerzak@p.lodz.pl}
\author{Jan Kubarski}
\email[J. Kubarski]{ jan.kubarski@p.lodz.pl}
\keywords{Lie algebroid, secondary (exotic) flat characteristic classes,
connection}

\begin{abstract}
The authors define some secondary characteristic homomorphism for the triple 
$\left( A,B,\nabla \right) $, in which $B\subset A$ is a pair of regular Lie
algebroids over the same foliated manifold and $\nabla :L\rightarrow A$ is a
homomorphism of Lie algebroids (i.e. a flat $L$-connection in $A$) where $L$
is an arbitrary (not necessarily regular) Lie algebroid and show that
characteristic classes from its image generalize known exotic characteristic
classes for flat regular Lie algebroids (Kubarski) and flat principal fibre
bundles with a reduction (Kamber, Tondeur). The generalization includes also
the one given by Crainic for representations of Lie algebroids on vector
bundles. For a pair of regular Lie algebroids $B\subset A$ and for the
special case of the flat connection $\limfunc{id}_{A}:A\rightarrow A$ we
obtain a characteristic homomorphism which is universal in the sense that it
is a factor of any other one for an arbitrary flat $L$-connection $\nabla
:L\rightarrow A$.
\end{abstract}

\maketitle

\section{Introduction}

From the very beginning the characteristic classes (primary and secondary)
are global invariants of geometric structures on manifolds (more generally
on principal fibre bundles on manifolds) determined mainly by connections,
reductions of structure Lie groups, and so on, and having some important
topological properties like homotopy independence, functoriality or
rigidity. N.~Teleman \cite{Teleman} showed in 1972 that the Chern-Weil
homomorphism of any principal fibre bundle with a \textbf{connected}
structural Lie group is an invariant of its infinitesimal object, i.e. of
the Lie algebroid of this bundle. J.~Kubarski \cite{K1} showed in 1991 that
the condition of the connectedness of the structural Lie group is redundant,
which means that primary characteristic homomorphisms (i.e. the Chern-Weil
homomorphism) is really the \textquotedblleft algebraic\textquotedblright\
notion belonging to the category of Lie algebroids. The crucial role was
played by some generalization of the standard concepts of the representation
of Lie groups (and Lie algebras) on vector spaces to the concept of the
representation of principal fibre bundles and of Lie algebroids on vector
bundles and comparing the spaces of suitable invariant cross-sections. It
turns out (J.~Kubarski \cite{K5}, \cite{K6})) that the same holds for the
secondary (exotic) characteristic classes, in particular, for the
characteristic classes of flat bundles. In \cite{K5} there was constructed a
characteristic homomorphism for flat regular Lie algebroids equipped with
some \textquotedblleft reduction\textquotedblright , i.e. with some Lie
subalgebroid, generalizing this homomorphism for foliated principal fibre
bundles given by F.~Kamber and Ph.~Tondeur \cite{K-T1,K-T2,K-T3}. Next, a
different approaches to secondary classes by M.~Crainic and R.~L.~Fernandes 
\cite[2003]{Cr-1}, \cite{Cr-2}, \cite{F}, \cite[2005]{Cr-F}, appeared in the
geometry of Lie algebroids (inspired, for example, by irregular Lie
algebroids important in the Poisson geometry), for example, secondary
characteristic classes for representations \cite[2003]{Cr-1}, characteristic
classes up to homotopy \cite{Cr-2} and intrinsic secondary characteristic
classes \cite{F}, \cite[2005]{Cr-F}. The last were lastly generalized by
I.~Vaisman \cite{V}.

The main goal of the paper is to build an exotic characteristic homomorphism
in the category of Lie algebroids, which generalizes simultaneously the one
given by Kubarski \cite{K5} and the one given by Crainic \cite{Cr-1} and
describes in the Lie algebroids language the classical case for foliated
principal bundles. It is a characteristic homomorphism $\Delta _{\left(
A,B,\nabla \right) \#}$ for the triple $\left( A,B,\nabla \right) $, where $%
B\subset {A}$ is a pair of regular Lie algebroids, both on the same regular
foliated manifold $\left( M,F\right) $, and $\nabla :L\rightarrow A$ is a
flat $L$-connection in $A$ (i.e. $\nabla $ is a homomorphism of Lie
algebroids), where $L$ is an arbitrary (not necessarily regular) Lie
algebroid on $M$. For a Lie algebroid of a principal fibre bundle, its
reduction, and a usual flat connection, we obtain a homomorphism equivalent
to the one by Kamber and Tondeur, mentioned above. Putting $L=A$ and the
\textquotedblleft trivial\textquotedblright\ flat connection $\nabla =%
\limfunc{id}_{A}:A\rightarrow A$, we obtain a quite new characteristic
homomorphism previously unknown, even in the context of the principal fibre
bundles. In fact, this homomorphism is obtained for a pair of Lie algebroids 
$\left( A,B\right) $, $B\subset A$ and can be denoted by $\Delta _{\left(
A,B\right) \#}$. It is in some sense a universal homomorphism. Namely, for
any flat $L$-connection $\nabla :L\rightarrow A$ we have%
\begin{equation}
\Delta _{\left( A,B,\nabla \right) \#}=\nabla ^{\#}\circ \Delta _{\left(
A,B\right) \#}
\end{equation}%
(we will say that $\Delta _{\left( A,B,\nabla \right) \#}$ is \emph{%
factorized} by the universal characteristic homomorphism $\Delta _{\left(
A,B\right) \#}$). The classes form the image of $\Delta _{\left( A,B\right)
\#}$ (which belongs to the cohomology algebra $\mathsf{H}^{\bullet }\left(
A\right) $ ) are called \emph{universal} for the pair $B\subset A.$

In the context of the comparison with the Crainic classes (which concern [in
our setting] the triple $\left( \limfunc{A}\left( \mathfrak{f}\right) ,%
\limfunc{A}\left( \mathfrak{f,}\left\{ h\right\} \right) ,\nabla \right) $
where $\limfunc{A}\left( \mathfrak{f}\right) $, $\limfunc{A}\left( \mathfrak{%
f,}\left\{ h\right\} \right) $ are Lie algebroids of a vector bundle $%
\mathfrak{f}$, its Riemann reduction $\left( \mathfrak{f,}\left\{ h\right\}
\right) $, and $\nabla :L\rightarrow \limfunc{A}\left( \mathfrak{f}\right) $
is an arbitrary flat $L$-connection in $\mathfrak{f}$,{\Huge \ }i.e. a
representation of $L$ on $\mathfrak{f}$), we present one -- based on the
Pfaffian -- characteristic class for an even dimensional, oriented vector
bundle not considered by Crainic. An example with such a nontrivial
universal characteristic class is presented in Section \ref{Example_proof}.

\section{The Secondary (Exotic) Characteristic Homomorphism for FS-Lie
Algebroids and the Universal Characteristic Homomorphism\label%
{Unifying_homomorphism}}

In this section we first define a characteristic homomorphism for the triple 
$\left( A,B,\nabla \right) $, where $B\subset A$ is a pair of regular Lie
algebroids, both on the same regular foliated manifold $\left( M,F\right) $,
and $\nabla :L\rightarrow A$ is a flat $L$-connection in $A$, where $L$ is
an arbitrary Lie algebroid on a manifold $M$.\ Next we compare this
homomorphism with characteristic homomorphisms for regular Lie algebroids
and usual flat connections -- given by Kubarski \cite{K5}, and for principal
fibre bundles -- given by Kamber and Tondeur \cite{K-T3}. The comparison
with the Crainic approach of secondary characteristic classes for
representations will be given in the next section.

\subsection{A Few Words about Lie Algebroids}

The notion of Lie algebroid (Pradines, 1967) had appeared as an
infinitesimal object of Lie groupoids, principal bundles, vector bundles,
TC-foliations, Poisson and Jacobi manifolds, etc. (for the historical
approach see \cite{M2}, \cite{M1}). A \textit{Lie algebroid} over a smooth
manifold $M$ is a triple $\left( {L},[\![\cdot ,\cdot ]\!],\#_{L}\right) $
where $L$ is a vector bundle over $M$, $\left( \Gamma \left( {L}\right)
,[\![\cdot ,\cdot ]\!]\right) $ is an $\mathbb{R}$-Lie algebra, $%
\#_{L}:L\rightarrow $ $TM$ is a linear homomorphism of vector bundles such
that $[\![\xi ,f\cdot \eta ]\!]=f\cdot \lbrack \![\xi ,\eta
]\!]+\#_{L}\left( \xi \right) \left( f\right) \cdot \eta $ for all $f\in
C^{\infty }\left( M\right) $, $\xi ,\eta \in \Gamma \left( {L}\right) $. The
anchor $\#_{L}$ of $\left( {L},[\![\cdot ,\cdot ]\!],\#_{L}\right) $ is
bracket-preserving, see \cite{He} and \cite{B-K-W}. The image $F:=\func{Im}%
\#_{L}\subset TM$ of the anchor $\#_{L}$ is an integrable (non-constant rank
in general) distribution whose leaves form a Stefan foliation $\mathcal{F}$
of $M$ \cite{P2}, \cite{Dufour}. We say also that $A$ is a Lie algebroid
over the foliated manifold $\left( M,F\right) .$ If the anchor $\#_{L}$ is
of constant rank [an epimorphism], then the Lie algebroid $L$ is called 
\textit{regular} [\textit{transitive}]. By a homomorphism of Lie algebroids $%
T:L\rightarrow A$ on a given manifold we mean a linear homomorphism of the
underlying vector bundles which commutes with the anchors and preserves Lie
bracket. The Lie algebroids of Lie groupoids, principal bundles, vector
bundles and TC-foliations are transitive, but the Lie algebroids of general
differential groupoids, Poisson manifolds, Jacobi manifolds etc. are rather
nontransitive (and, in general, irregular). For details concerning Lie
functors on the category of principal fibre bundles $P\rightsquigarrow 
\limfunc{A}\left( P\right) $ and vector bundles $\mathfrak{f}%
\rightsquigarrow \limfunc{A}\left( \mathfrak{f}\right) $ see, for example, 
\cite{K1}. $\limfunc{A}\left( P\right) =TP/G$ is a Lie algebroid with the
real Lie algebra $\Gamma \left( \limfunc{A}\left( P\right) \right) \cong 
\mathfrak{X}^{r}\left( P\right) $ and the anchor $\#_{\limfunc{A}\left(
P\right) }:\limfunc{A}\left( P\right) \rightarrow $ $TM$ determined by the
projection $\pi _{\ast }$ whereas $\limfunc{A}\left( \mathfrak{f}\right) $
is the vector bundle whose global cross-sections form a Lie algebra of
covariant derivative operators and the anchor $\#_{\limfunc{A}\left( 
\mathfrak{f}\right) }:\limfunc{A}\left( \mathfrak{f}\right) \rightarrow TM$
is defined by the anchors of these operators. Together with a $V$-vector
bundle $\mathfrak{f}$ (the vector space $V$ is the typical fibre of $%
\mathfrak{f}$) we associate the $GL\left( V\right) $-principal fibre bundle
of frames $\limfunc{L}\left( \mathfrak{f}\right) $ and its Lie algebroid $%
\limfunc{A}\left( \limfunc{L}\left( \mathfrak{f}\right) \right) $ which is
canonically isomorphic to $\limfunc{A}\left( \mathfrak{f}\right) $ \cite{K1}%
. For a regular Lie algebroid $L$ we have the exact Atiyah sequence\ $%
0\longrightarrow {{\pmb{g}\hookrightarrow }}L\overset{\#_{L}}{%
\longrightarrow }F\longrightarrow 0$, (${{\pmb{g}=\ker }}\#_{L}$), the fibre 
${{\pmb{g}}}_{|x}$ of ${{\pmb{g}}}$ at $x$\ is a Lie algebra called the 
\textit{isotropy Lie algebra at }$x$. Over any leaf of the foliation $F$ the
vector bundle ${{\pmb{g}}}$ is a Lie algebra bundle. A splitting of this
sequence $\nabla :F\rightarrow L$ (i.e. $\#_{L}\circ \nabla =\limfunc{id}%
\nolimits_{F}$) is called a connection in $L.$ If $L=\limfunc{A}\left(
P\right) $ (here $F=TM$), then connections in $L$ correspond one-to-one to
usual connections in the principal fibre bundle $P$ (i.e. a horizontal
right-invariant distributions on $P$). We consider more general notion of $L$%
-connection in $A$\ (where $L$\ and $A$\ are arbitrary Lie algebroids on the
same manifold) understanding as a linear homomorphism of vector bundles $%
\nabla :L\rightarrow A$\ commuting with the anchors \cite{B-K-W}. If $A$ is
a regular Lie algebroid with the adjoint LAB $\pmb{g}$, then the curvature
tensor $R^{\nabla }\in \Omega ^{2}\left( L,{{\pmb{g}}}\right) $ is defined
by $R^{\nabla }\left( \xi ,\eta \right) =[\![\nabla \xi ,\nabla \eta
]\!]-\nabla \left( \lbrack \![\xi ,\eta ]\!]\right) ,\ \ \xi ,\eta \in
\Gamma \left( L\right) .$ Clearly, $\nabla $ is a homomorphism of Lie
algebroids if and only if $\nabla $ is flat, i.e. if $R^{\nabla }=0$. Any $L$%
-connection $\nabla :L\rightarrow A\ $determines the standard operator $%
d^{\nabla }:\Omega \left( L;{{\pmb{g}}}\right) \rightarrow \Omega \left( L;{{%
\pmb{g}}}\right) $ in the space of $L$-differential forms with values in ${{%
\pmb{g}}}$ by the formula 
\begin{align}
\left( d^{\nabla }\Omega \right) \left( \xi _{1},\ldots ,\xi _{n}\right) &
=\dsum\nolimits_{i=1}^{n}\left( -1\right) ^{i-1}[\![\nabla _{\xi
_{i}},\Omega \left( \xi _{1},\ldots \hat{\imath}\ldots ,\xi _{n}\right)
]\!]_{A}  \label{dnabla} \\
& +\dsum\nolimits_{i<j}\left( -1\right) ^{i+j}\,\Omega \left( \lbrack \![\xi
_{i},\xi _{j}]\!]_{L},\xi _{1},\ldots \hat{\imath}\ldots \hat{\jmath}\ldots
,\xi _{n}\right) .  \notag
\end{align}%
The equality $d^{\nabla }d^{\nabla }\Omega =R^{\nabla }\wedge \Omega $ holds;%
{\Huge \ }in particular, $d^{\nabla }\ $is a differential operator if $%
\nabla $ is flat. For an arbitrary Lie algebroid $L$ there is a derivative $%
d_{L}$ in the space of real $L$-forms $\Gamma (\dbigwedge L^{\ast })$ giving
the cohomology algebra $\mathsf{H}^{\bullet }\left( L\right) $. For more
details about Lie algebroids we refer the reader, for example, to \cite{M1}, 
\cite{M2}.

\subsection{The Construction of the Secondary Characteristic Classes}

Let us consider the triple $\left( A,B,\nabla \right) $, in which we have: a
regular Lie algebroid $\left( {A},[\![\cdot ,\cdot ]\!],\#_{A}\right) $ on a
foliated manifold $\left( M,F\right) $, its\ regular Lie subalgebroid $%
B\subset A$, also on the same foliated manifold $\left( M,F\right) $, and a
flat $L$-connection $\nabla :L\rightarrow A$ in $A$ for an arbitrary Lie
algebroid $L$. We will call the triple%
\begin{equation*}
\left( A,B,\nabla \right)
\end{equation*}%
an FS-\emph{Lie algebroid}. The characteristic homomorphism

for this triple constructed below measures the independence of these two
geometric structures $B$ and $\nabla $ defined for $A$ (in the sense that it
is zero when $\nabla $ takes values in $B$). In the diagram below $\lambda
:F\rightarrow B$ is an arbitrary auxiliary connection in $B$. Then $j\circ
\lambda :F\rightarrow A$ is a connection in $A$. Let $\breve{\lambda}%
:A\rightarrow {{\pmb{g}}}$ be its connection form. 
\begin{equation*}
\xext=2000\yext=700\begin{picture}(\xext,\yext)(\xoff,\yoff) \putmorphism
(0,620)(1,0)[0`\pmb{g}`]{500}1a \putmorphism(550,620)(1,0)[`A`]{450}1a
\put(750,680){\makebox(0,0){$i$}} \putmorphism(1050,620)(1,0)[``]{450}{-1}a
\putmorphism (1050,50)(1,0)[``{\lambda}]{400}{-1}b
\put(1775,620){\makebox(0,0){$L$}} \put(1780,540){\vector(0,-1){360}}
\put(1450,620){\line(1,0){270}} \putmorphism(0,100)(1,0)[0`\pmb{h}`]{500}1a
\put(444,170){{$\cup$}} \putmorphism(570,100)(1,0)[`B`]{450}1a
\putmorphism(1050,100)(1,0)[`F`\#_{B}]{450}1a
\put(1790,100){\makebox(0,0){$F_{1}$}} \put(540,623){{$\subset$}}
\put(1650,100){\makebox(0,0){$\supset$}} \put(1870,370){\makebox
(0,0){$\#_{L}$}} \putmorphism(504,600)(0,1)[``]{460}{-1}r \put
(550,105){{$\subset$}} \put(951,170){{$\cup$}} \putmorphism
(1010,600)(0,1)[``j]{460}{-1}r \putmorphism(1020,620)(1,-1)[``]{530}1l
\put(1380,370){\makebox(0,0){${\#}_{A}$}} \put(1380,690){\makebox
(0,0){$\nabla$}} \put(935,590){\vector(-1,0){370}} \put(750,530){\makebox
(0,0){{$\breve{\lambda} $}}} \end{picture}
\end{equation*}%
We define the homomorphism 
\begin{equation*}
\omega _{B,\nabla }:L\longrightarrow {{\pmb{g}/\pmb{h}\ \ \ }}\text{by\ \ \ }%
\omega _{B,\nabla }\left( w\right) =[-(\breve{\lambda}\circ \nabla )\left(
w\right) ].
\end{equation*}%
Observe that $\omega _{B,\nabla }$ does not depend on the choice of an
auxiliary connection $\lambda :F\rightarrow A$ and $\omega _{B,\nabla }=0$
if $\nabla $ takes values in $B$. Let us define a homomorphism of algebras 
\begin{equation}
\Delta _{\left( A,B,\nabla \right) }:\Gamma \left(
\bigwedge\nolimits^{k}\left( {{\pmb{g}/\pmb{h}}}\right) ^{\ast }\right)
\longrightarrow \Omega \left( L\right) ,  \label{homodelta}
\end{equation}%
\begin{equation*}
{\large (}\Delta _{\left( A,B,\nabla \right) }\Psi {\large )}_{x}\left(
w_{1}\wedge \ldots \wedge w_{k}\right) =\left\langle \Psi _{x},\omega
_{B,\nabla }\left( w_{1}\right) \wedge \ldots \wedge \omega _{B,\nabla
}\left( w_{k}\right) \right\rangle ,\ \ w_{i}\in L_{|x}.
\end{equation*}%
In the special simple case $L=A\ $and the flat connection $\nabla =\func{id}%
_{A}:A\rightarrow A$ we have particular case of a homomorphism for the pair $%
\left( A,B\right) $:%
\begin{eqnarray*}
\Delta _{\left( A,B\right) } &:&\Gamma \left( \bigwedge\nolimits^{k}\left( {{%
\pmb{g}/\pmb{h}}}\right) ^{\ast }\right) \longrightarrow \Omega \left(
A\right) , \\
{\large (}\Delta _{\left( A,B\right) }\Psi {\large )}_{x}\left( \upsilon
_{1}\wedge \ldots \wedge \upsilon _{k}\right) &=&\langle \Psi _{x},[-\breve{%
\lambda}\left( \upsilon _{1}\right) ]\wedge \ldots \wedge \lbrack -\breve{%
\lambda}\left( \upsilon _{k}\right) ]\rangle ,\ \ \ \ \upsilon _{i}\in
A_{|x}.
\end{eqnarray*}%
We assert that $\Delta _{\left( A,B,\nabla \right) }$ can be written as a
composition $\Delta _{\left( A,B,\nabla \right) }=\nabla ^{\ast }\circ
\Delta _{\left( A,B\right) },$%
\begin{equation*}
\begin{CD}
 \Delta _{\left( A,B,\nabla \right) }:\operatorname{\Gamma}\left( \bigwedge\left( {{\pmb{g}/\pmb{h}}}\right) ^{\ast}\right)
@>{   \Delta _{\left( A,B\right) } }>>
\Omega\left( A\right)    @>{\nabla^{\ast} }>> \Omega\left( L\right),
\end{CD}%
\end{equation*}%
where $\nabla ^{\ast }$ is the pullback of forms. In the algebra $\Gamma
\left( \bigwedge \left( {{\pmb{g}/\pmb{h}}}\right) ^{\ast }\right) $ we
distinguish the subalgebra \label{sec3.1 (A)}$\left( \Gamma \left( \bigwedge
\left( {{\pmb{g}/\pmb{h}}}\right) ^{\ast }\right) \right) ^{\Gamma \left(
B\right) }$ of invariant cross-sections with respect to the representation
of the Lie algebroid $B$ in the vector bundle $\bigwedge \left( {{\pmb{g}/%
\pmb{h}}}\right) ^{\ast }$, associated to the adjoint one $\limfunc{ad}%
\nolimits_{B,{{\pmb{h}}}}:B\rightarrow \limfunc{A}\left( {{\pmb{g}/\pmb{h}}}%
\right) $, $\limfunc{ad}\nolimits_{B,{{\pmb{h}}}}\left( \xi \right) \left( %
\left[ \nu \right] \right) =\left[ [\![\xi ,\nu ]\!]\right] $, $\xi \in
\Gamma \left( B\right) $, $\nu \in \Gamma \left( {{\pmb{g}}}\right) $, where 
$\limfunc{A}\left( {{\pmb{g}/\pmb{h}}}\right) $ is the (transitive) Lie
algebroid of ${{\pmb{g}/\pmb{h}}}$. Clearly, $\Psi \in \left( \Gamma \left(
\bigwedge^{k}\left( {{\pmb{g}/\pmb{h}}}\right) ^{\ast }\right) \right)
^{\Gamma \left( B\right) }$ if and only if $\left( \#_{B}\circ \xi \right) 
\hspace{-0.1cm}\left\langle \Psi ,\left[ \nu _{1}\right] \wedge \hspace{%
-0.05cm}\ldots \hspace{-0.05cm}\wedge \left[ \nu _{k}\right] \right\rangle 
\hspace{-0.05cm}=\hspace{-0.05cm}\sum_{j=1}^{k}\left( -1\right) ^{j-1}%
\hspace{-0.1cm}\left\langle \Psi ,\left[ [\![j\circ \xi ,\nu _{j}]\!]\right]
\wedge \left[ \nu _{1}\right] \wedge \hspace{-0.05cm}\ldots \hat{\jmath}%
\ldots \hspace{-0.05cm}\wedge \left[ \nu _{k}\right] \right\rangle $ for all 
$\xi \in \Gamma \left( B\right) $ and $\nu _{j}\in \Gamma \left( {{\pmb{g}}}%
\right) $ (see \cite{K1}). In the space $\left( \Gamma \left( \bigwedge
\left( {{\pmb{g}/\pmb{h}}}\right) ^{\ast }\right) \right) ^{\Gamma \left(
B\right) }$ of invariant cross-sections there exists a differential $\bar{%
\delta}$ defined by the formula%
\begin{equation*}
\hspace{-0.1cm}\left\langle \bar{\delta}\Psi ,\left[ \nu _{1}\right] \wedge
\ldots \wedge \left[ \nu _{k}\right] \right\rangle =\dsum\limits_{i<j}\left(
-1\right) ^{i+j+1}\hspace{-0.1cm}\left\langle \Psi ,\left[ [\![\nu _{i},\nu
_{j}]\!]\right] \wedge \left[ \nu _{1}\right] \wedge \ldots \hat{\imath}%
\ldots \hat{\jmath}\ldots \wedge \left[ \nu _{k}\right] \right\rangle ,
\end{equation*}%
(see \cite{K5}) and we obtain the cohomology algebra 
\begin{equation*}
\mathsf{H}^{\bullet }\left( {{\pmb{g}}},B\right) :=\mathsf{H}^{\bullet }%
{\LARGE ((}\Gamma {\LARGE (}\bigwedge \left( {{\pmb{g}/\pmb{h}}}\right)
^{\ast }{\Large ))}^{\Gamma \left( B\right) },\bar{\delta}{\Large )}.
\end{equation*}

\begin{theorem}
The homomorphism $\Delta _{\left( A,B,\nabla \right) }$ commutes with the
differentials $\bar{\delta}$ and $d_{L}$, where $d_{L}$ is the differential
operator in $\Omega \left( L\right) =\Gamma \left( \bigwedge L^{\ast
}\right) $.
\end{theorem}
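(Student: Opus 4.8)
The plan is to deduce the general statement from the special \emph{universal} case and then to treat that case by a direct Chevalley--Eilenberg type computation. Since $\nabla :L\rightarrow A$ is a homomorphism of Lie algebroids, its pullback $\nabla ^{\ast }:\Omega \left( A\right) \rightarrow \Omega \left( L\right) $ on forms is a cochain map, i.e. $\nabla ^{\ast }\circ d_{A}=d_{L}\circ \nabla ^{\ast }$; this is precisely the infinitesimal content of $\nabla $ preserving anchors and brackets. Granting the factorization $\Delta _{\left( A,B,\nabla \right) }=\nabla ^{\ast }\circ \Delta _{\left( A,B\right) }$ established above, it then suffices to prove that the universal homomorphism commutes with the differentials,
\[
d_{A}\circ \Delta _{\left( A,B\right) }=\Delta _{\left( A,B\right) }\circ \bar{\delta},
\]
because then one obtains
\[
d_{L}\circ \Delta _{\left( A,B,\nabla \right) }=d_{L}\circ \nabla ^{\ast }\circ \Delta _{\left( A,B\right) }=\nabla ^{\ast }\circ d_{A}\circ \Delta _{\left( A,B\right) }=\nabla ^{\ast }\circ \Delta _{\left( A,B\right) }\circ \bar{\delta}=\Delta _{\left( A,B,\nabla \right) }\circ \bar{\delta}.
\]

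For the universal case I would work with the $\pmb{g}/\pmb{h}$-valued $A$-form $\omega \left( \upsilon \right) =\left[ -\breve{\lambda}\left( \upsilon \right) \right] $, so that $\Delta _{\left( A,B\right) }\Psi =\Psi \circ \left( \bigwedge \nolimits^{k}\omega \right) $, and I would record three structural facts about the connection $j\circ \lambda :F\rightarrow A$ with form $\breve{\lambda}$. First, the splitting $\upsilon =\breve{\lambda}\left( \upsilon \right) +\left( j\circ \lambda \right) \left( \#_{A}\upsilon \right) $ together with $\breve{\lambda}|_{{\pmb{g}}}=\func{id}$. Second, $\omega $ vanishes on $B$: for $w\in \Gamma \left( B\right) $ one computes $\breve{\lambda}\left( jw\right) =j\left( w-\lambda \#_{B}w\right) \in \pmb{h}$. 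Third, and most importantly, the \emph{structure equation} obtained by expanding $\breve{\lambda}\left( [\![\upsilon _{i},\upsilon _{j}]\!]\right) $ through the splitting: the purely vertical part contributes $[\![\breve{\lambda}\upsilon _{i},\breve{\lambda}\upsilon _{j}]\!]$, the two mixed parts contribute the brackets of a vertical with a horizontal vector, and the purely horizontal part is the curvature $\breve{\lambda}[\![\left( j\circ \lambda \right) \left( \#_{A}\upsilon _{i}\right) ,\left( j\circ \lambda \right) \left( \#_{A}\upsilon _{j}\right) ]\!]$, which lies in $\pmb{h}$ because $\lambda $ is a connection in the \emph{subalgebroid} $B$ and $j$ preserves brackets. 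Hence, modulo $\pmb{h}$,
\[
\omega \left( [\![\upsilon _{i},\upsilon _{j}]\!]\right) =\left[ -[\![\breve{\lambda}\upsilon _{i},\breve{\lambda}\upsilon _{j}]\!]\right] -\left[ [\![\left( j\circ \lambda \right) \left( \#_{A}\upsilon _{i}\right) ,\breve{\lambda}\upsilon _{j}]\!]\right] +\left[ [\![\left( j\circ \lambda \right) \left( \#_{A}\upsilon _{j}\right) ,\breve{\lambda}\upsilon _{i}]\!]\right] .
\]

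With these in hand I would expand $d_{A}\left( \Delta _{\left( A,B\right) }\Psi \right) \left( \upsilon _{0},\ldots ,\upsilon _{k}\right) $ by the Koszul formula into the anchor-derivative terms $\sum_{i}\left( -1\right) ^{i}\#_{A}\left( \upsilon _{i}\right) \left\langle \Psi ,\ldots \right\rangle $ and the bracket terms $\sum_{i<j}\left( -1\right) ^{i+j}\left\langle \Psi ,\omega \left( [\![\upsilon _{i},\upsilon _{j}]\!]\right) \wedge \cdots \right\rangle $, and substitute the structure equation into the latter. The purely vertical contribution $-[\![\breve{\lambda}\upsilon _{i},\breve{\lambda}\upsilon _{j}]\!]$ reproduces exactly $\Delta _{\left( A,B\right) }\left( \bar{\delta}\Psi \right) \left( \upsilon _{0},\ldots ,\upsilon _{k}\right) =\left\langle \bar{\delta}\Psi ,\omega \left( \upsilon _{0}\right) \wedge \cdots \wedge \omega \left( \upsilon _{k}\right) \right\rangle $, on comparing with the defining formula for $\bar{\delta}$ with representatives $\nu _{l}=-\breve{\lambda}\upsilon _{l}$ (the two signs from $\omega $ cancelling in each bracket). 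To dispose of the remaining mixed terms I invoke the $\Gamma \left( B\right) $-invariance of $\Psi $ applied to the sections $\xi _{i}:=\lambda \left( \#_{A}\upsilon _{i}\right) \in \Gamma \left( B\right) $, which satisfy $\#_{B}\xi _{i}=\#_{A}\upsilon _{i}$ and $[\![j\xi _{i},\nu ]\!]=[\![\left( j\circ \lambda \right) \left( \#_{A}\upsilon _{i}\right) ,\nu ]\!]$; the invariance identity rewrites each $\#_{A}\left( \upsilon _{i}\right) \left\langle \Psi ,\ldots \right\rangle $ as a sum of algebraic terms that match, with opposite sign, exactly the mixed contributions coming from the structure equation, so they cancel in pairs and only $\Delta _{\left( A,B\right) }\bar{\delta}\Psi $ survives.

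The routine but delicate part --- and the place where an error is most likely --- is the sign and index bookkeeping required to pair up the three families of terms: the Koszul anchor-derivatives converted by invariance, the mixed brackets from the structure equation, and the slot-shuffling implicit in the $\bar{\delta}$-formula. The only genuinely conceptual inputs are the two observations that (i) the curvature of $j\circ \lambda $ is $\pmb{h}$-valued, so it drops out modulo $\pmb{h}$, and (ii) $\#_{A}\upsilon _{i}$ is realised as $\#_{B}$ of the $B$-section $\lambda \left( \#_{A}\upsilon _{i}\right) $, which is what makes the $\Gamma \left( B\right) $-invariance condition applicable here; everything else is the Koszul formula together with the already noted independence of $\omega _{B,\nabla }$ of the auxiliary $\lambda $. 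I would close by remarking that $C^{\infty }\left( M\right) $-multilinearity of $\Delta _{\left( A,B\right) }\Psi $ is immediate, since $\breve{\lambda}$ is a vector-bundle morphism, so it is harmless to evaluate on global cross-sections throughout.
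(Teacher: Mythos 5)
Your proposal is correct and follows essentially the same route as the paper: reduce to the universal homomorphism via the factorization $\Delta _{\left( A,B,\nabla \right) }=\nabla ^{\ast }\circ \Delta _{\left( A,B\right) }$ (using that $\nabla ^{\ast }$ is a cochain map), then verify $d_{A}\circ \Delta _{\left( A,B\right) }=\Delta _{\left( A,B\right) }\circ \bar{\delta}$ by the Koszul formula, the fact that the curvature of $j\circ \lambda $ is $\pmb{h}$-valued, and the $\Gamma \left( B\right) $-invariance of $\Psi $ applied through sections of $B$ covering $\#_{A}\upsilon _{i}$. Your ``structure equation'' is the same identity as the paper's curvature formula $\Omega ^{j\circ \lambda }(\#_{A}\circ \xi _{i},\#_{A}\circ \xi _{j})=[\breve{\lambda}\circ \xi _{i},\breve{\lambda}\circ \xi _{j}]-[\![\xi _{i},\breve{\lambda}\circ \xi _{j}]\!]+[\![\xi _{j},\breve{\lambda}\circ \xi _{i}]\!]+\breve{\lambda}\circ \lbrack \![\xi _{i},\xi _{j}]\!]\in \pmb{h}$, merely written with horizontal lifts in place of the full sections, so the two computations coincide.
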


\begin{proof}
Since the pullback of differential forms $\nabla ^{\ast }:\Omega \left(
L\right) \rightarrow \Omega \left( A\right) $ commutes with differentials $%
d_{L}$ and $d_{A}$, it is sufficient to show the commutativity of $\Delta
_{\left( A,B\right) }$ with differentials $\bar{\delta}$ and $d_{A}$. Let $%
\xi _{0},\,\ldots ,\xi _{k}\in \Gamma \left( A\right) $, and $\Psi $ be an
arbitrary invariant cross-section of the degree $k$. The curvature tensor $%
\Omega ^{j\circ \lambda }$ of the connection $j\circ \lambda $ takes values
in the bundle ${{\pmb{h}}}$. Thus, we see that%
\begin{equation*}
\Omega ^{j\circ \lambda }(\#_{A}\circ \xi _{i},\#_{A}\circ \xi _{j})=[\breve{%
\lambda}\circ \xi _{i},\breve{\lambda}\circ \xi _{j}]-[\hspace{-0.1cm}[\xi
_{i},\breve{\lambda}\circ \xi _{j}]\hspace{-0.1cm}]+[\hspace{-0.1cm}[\xi
_{j},\breve{\lambda}\circ \xi _{i}]\hspace{-0.1cm}]+\breve{\lambda}\circ
\lbrack \hspace{-0.1cm}[\xi _{i},\xi _{j}]\hspace{-0.1cm}]\in {{\pmb{h}.}}
\end{equation*}%
Therefore, using invariance of $\Psi $, we have%
\begin{align*}
& {\large (}d_{A}\circ \Delta _{\left( A,B\right) }{\large )}\left( \Psi
\right) \left( \xi _{0},\ldots ,\xi _{k}\right) \\
& =\underset{i<j}{\sum }\left( -1\right) ^{i+j+1}\,\langle \Psi ,[[\hspace{%
-0.1cm}[\breve{\lambda}\circ \xi _{i},\omega \circ \xi _{j}]\hspace{-0.1cm}%
]]\wedge \lbrack -\breve{\lambda}\circ \xi _{0}]\wedge \ldots \hat{\imath}%
\ldots \hat{\jmath}\ldots \wedge \lbrack -\breve{\lambda}\circ \xi
_{k}]\rangle \\
& -\underset{i<j}{\sum }\left( -1\right) ^{i+j}\hspace{-0.1cm}\langle \Psi ,%
\left[ \Omega ^{j\circ \lambda }\left( \#_{A}\circ \xi _{i},\#_{A}\circ \xi
_{j}\right) \right] \wedge \lbrack -\breve{\lambda}\circ \xi _{0}]\wedge
\ldots \hat{\imath}\ldots \hat{\jmath}\ldots \wedge \lbrack -\breve{\lambda}%
\circ \xi _{k}]\rangle \\
& =\langle \bar{\delta}\Psi ,[-\breve{\lambda}\circ \xi _{0}]\wedge ~\ldots
~\wedge \lbrack -\breve{\lambda}\circ \xi _{k}]\rangle \\
& ={\large (}\Delta _{\left( A,B\right) }\circ \bar{\delta}{\large )}\left(
\Psi \right) \left( \xi _{0},\ldots ,\xi _{k}\right) .
\end{align*}
\end{proof}

\begin{corollary}
$\Delta _{\left( A,B\right) }$ \emph{and }$\Delta _{\left( A,B,\nabla
\right) }$ \emph{induce the cohomology homomorphisms }$\ $%
\begin{equation*}
\Delta _{\left( A,B\right) \#}:\mathsf{H}^{\bullet }\left( {{\pmb{g}}}%
,B\right) \rightarrow \mathsf{H}^{\bullet }\left( A\right)
\end{equation*}%
\emph{and}%
\begin{equation*}
\Delta _{\left( A,B,\nabla \right) \#}:\mathsf{H}^{\bullet }\left( {{\pmb{g}}%
},B\right) \rightarrow \mathsf{H}^{\bullet }\left( L\right) .
\end{equation*}
\end{corollary}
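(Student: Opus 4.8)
The plan is to deduce the Corollary as a formal consequence of the Theorem, by invoking the standard principle that a cochain map between cochain complexes descends to a well-defined homomorphism on cohomology. First I would pin down the complexes involved: the common domain is $\left( \left( \Gamma \left( \bigwedge \left( {{\pmb{g}/\pmb{h}}}\right)^{\ast} \right) \right)^{\Gamma\left( B\right)}, \bar{\delta} \right)$, whose cohomology is $\mathsf{H}^{\bullet}\left( {{\pmb{g}}}, B\right)$, while the two targets are $\left( \Omega\left( A\right), d_{A}\right)$ and $\left( \Omega\left( L\right), d_{L}\right)$, with cohomologies $\mathsf{H}^{\bullet}\left( A\right)$ and $\mathsf{H}^{\bullet}\left( L\right)$.

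Next I would restrict $\Delta_{\left( A,B\right)}$ to the invariant subalgebra $\left( \Gamma \left( \bigwedge \left( {{\pmb{g}/\pmb{h}}}\right)^{\ast} \right) \right)^{\Gamma\left( B\right)}$, which is precisely the domain on which $\bar{\delta}$ is defined. The Theorem already supplies the identity $d_{A}\circ \Delta_{\left( A,B\right)} = \Delta_{\left( A,B\right)}\circ \bar{\delta}$ there, so $\Delta_{\left( A,B\right)}$ is a cochain map, and the induced map is well defined by two routine checks: if $\bar{\delta}\Psi = 0$ then $d_{A}\left( \Delta_{\left( A,B\right)}\Psi \right) = \Delta_{\left( A,B\right)}\left( \bar{\delta}\Psi \right) = 0$, so cocycles map to cocycles; and if $\Psi = \bar{\delta}\Phi$ then $\Delta_{\left( A,B\right)}\Psi = d_{A}\left( \Delta_{\left( A,B\right)}\Phi \right)$, so coboundaries map to coboundaries. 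Hence $\left[ \Psi \right] \mapsto \left[ \Delta_{\left( A,B\right)}\Psi \right]$ is a well-defined homomorphism $\Delta_{\left( A,B\right)\#}:\mathsf{H}^{\bullet}\left( {{\pmb{g}}}, B\right) \rightarrow \mathsf{H}^{\bullet}\left( A\right)$.

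For the second homomorphism I would use the factorization $\Delta_{\left( A,B,\nabla \right)} = \nabla^{\ast}\circ \Delta_{\left( A,B\right)}$ together with the fact recalled in the proof of the Theorem that the pullback $\nabla^{\ast}:\Omega\left( A\right) \rightarrow \Omega\left( L\right)$ satisfies $\nabla^{\ast}\circ d_{A} = d_{L}\circ \nabla^{\ast}$ and is therefore itself a cochain map. A composition of cochain maps is again a cochain map, so $\Delta_{\left( A,B,\nabla \right)}$ commutes with $\bar{\delta}$ and $d_{L}$ (which is the content of the Theorem), and the same two checks produce $\Delta_{\left( A,B,\nabla \right)\#}:\mathsf{H}^{\bullet}\left( {{\pmb{g}}}, B\right) \rightarrow \mathsf{H}^{\bullet}\left( L\right)$. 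Passing to cohomology turns the factorization into $\Delta_{\left( A,B,\nabla \right)\#} = \nabla^{\#}\circ \Delta_{\left( A,B\right)\#}$, which is the universality identity announced in the introduction.

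Since the entire argument reduces to the chain-map property already established in the Theorem, there is essentially no obstacle to overcome; the only point deserving a moment's attention is to confirm that $\Delta_{\left( A,B\right)}$ indeed carries the invariant subalgebra into $\Omega\left( A\right)$ and that $\bar{\delta}$ is genuinely the restricted differential on that subalgebra, so that the domain complex computing $\mathsf{H}^{\bullet}\left( {{\pmb{g}}}, B\right)$ is the correct one. Once that is verified, the statement is immediate.
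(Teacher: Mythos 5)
Your proposal is correct and matches the paper's (implicit) argument: the paper states this Corollary without proof precisely because it follows from the Theorem by the standard fact that cochain maps induce well-defined homomorphisms on cohomology, which is exactly what you spell out, including the factorization through $\nabla^{\ast}$ used in the paper's proof of the Theorem. Your additional checks (cocycles to cocycles, coboundaries to coboundaries, and the identification of the domain complex) are the routine verifications the paper takes for granted.
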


The significance of $\Delta _{\left( A,B\right) \#}$ follows from the fact
that $\Delta _{\left( A,B,\nabla \right) \#}$, for every flat $L$-connection 
$\nabla :L\rightarrow A$, is factorized by $\Delta _{\left( A,B\right) \#}$: 
\begin{equation}
\begin{CD}
\Delta _{\left( A,B,\nabla \right) \#}:\mathsf{H}^{\bullet}\left( {{\pmb{g},B}}\right)      @>{ \Delta _{\left( A,B\right) \#}}>>
\mathsf{H}^{\bullet}\left(A\right)      @>{{\nabla}^{\#} }>> \mathsf
{H}^{\bullet}\left( L\right) .
\end{CD}
\label{deltacohomology}
\end{equation}

The map $\Delta _{\left( A,B,\nabla \right) \#}$ we will call the \textit{%
characteristic homomorphism }of the FS-Lie algebroid $\left( A,B,\nabla
\right) $. We call elements of a subalgebra $\func{Im}\Delta _{\left(
A,B,\nabla \right) \#}\subset \mathsf{H}^{\bullet }\left( L\right) $ the 
\textit{secondary} (\textit{exotic})\textit{\ characteristic classes} of
this algebroid. In particular, $\Delta _{\left( A,B\right) \#}=\Delta
_{\left( A,B,\func{id}_{A}\right) \#}$ is the characteristic homomorphism of
the Lie subalgebroid $B\subset A$. We define the last homomorphism as the 
\textit{universal exotic characteristic homomorphism }and the characteristic
classes from its image as the \textit{universal characteristic classes} of
the pair $B\subset A$.

\subsection{The Case for Regular Lie Algebroids and Usual Flat Connections 
\label{sec3.1}}

Given a regular Lie algebroid $\left( {A},[\![\cdot ,\cdot
]\!],\#_{A}\right) $ over a regular, foliated manifold $\left( M,F\right) $,
consider two geometric structures:

-- a flat connection $\omega :F\rightarrow A$,

-- a Lie{\Huge \ }subalgebroid $j:B\hookrightarrow A$ over $\left(
M,F\right) $.

Let $\breve{\omega}:A\rightarrow {{\pmb{g}}}$ be the connection form of $%
\omega $. Let us consider an auxiliary connection $\lambda :F\rightarrow B$,
its extension $j\circ \lambda $\ to $A$\ and let $\breve{\lambda}%
:A\rightarrow \pmb{g}$ be its connection form.\emph{\ }Since $i\circ \breve{%
\omega}+\omega \circ \#_{A}=\func{id}_{A}$, it follows that $i\circ \breve{%
\omega}\circ j\circ \lambda =-i\circ \breve{\lambda}\circ \omega $. Hence,
we conclude that%
\begin{align*}
{\large (}\Delta _{\left( A,B,\omega \right) }\Psi {\large )}_{|x}\left(
w_{1}\wedge \ldots \wedge w_{k}\right) & =\left\langle \Psi _{x},{\large [}-%
{\large (}\breve{\lambda}\circ \omega {\large )}\left( w_{1}\right) {\large ]%
}\wedge \ldots \wedge {\large [}-{\large (}\breve{\lambda}\circ \omega 
{\large )}\left( w_{k}\right) {\large ]}\right\rangle \\
& =\left\langle \Psi _{x},\left[ \breve{\omega}_{x}\left( \tilde{w}%
_{1}\right) \right] \wedge \ldots \wedge \left[ \breve{\omega}_{x}\left( 
\tilde{w}_{k}\right) \right] \right\rangle ,
\end{align*}%
where $\tilde{w}_{i}=\lambda \left( w_{i}\right) $. Since $\#_{B}\hspace{%
-0.05cm}\left( \tilde{w}_{i}\right) =w_{i}$,%
\begin{equation*}
\Delta _{\left( A,B,\omega \right) \#}:\mathsf{H}^{\bullet }\left( {{\pmb{g}}%
},B\right) \longrightarrow \mathsf{H}^{\bullet }\left( F\right)
\end{equation*}%
is the characteristic homomorphism for the regular flat Lie algebroid $%
\left( A,B,\omega \right) $, which was considered in \cite{K5}.

\subsection{The Particular Case: The Classical Kamber-Tondeur Homomorphism
and Universal Characteristic Homomorphism Factorizing the Classical One\label%
{universal}}

Consider{\huge \ }any $G$-principal fibre bundle $P$ over a smooth manifold $%
M$, a flat connection $\omega \subset TP$ in $P$ and a connected\ $H$%
-reduction $P^{\prime }\subset P$, where $H\subset G$ is a closed Lie
subgroup of $G$ (we do not assume either connectedness or compactness of $H$%
). Applying the Lie functor for principal fibre bundles{\Huge \ }we can
consider Lie algebroids $\limfunc{A}\left( P\right) \ $and $\limfunc{A}%
\left( P^{\prime }\right) $ as well as the induced flat connection $\omega
^{A}:TM\rightarrow \limfunc{A}\left( P\right) $ in the Lie algebroid $%
\limfunc{A}\left( P\right) $ and the secondary characteristic homomorphism%
\begin{equation*}
\Delta _{\left( \limfunc{A}\left( P\right) ,\limfunc{A}\left( P^{\prime
}\right) ,\omega ^{A}\right) \#}:\mathsf{H}^{\bullet }\left( {{\pmb{g},}}%
\limfunc{A}\left( P^{\prime }\right) \right) \longrightarrow \mathsf{H}%
_{dR}^{\bullet }\left( M\right) 
\end{equation*}%
for the FS-Lie algebroid $\left( \limfunc{A}\left( P\right) ,\limfunc{A}%
\left( P^{\prime }\right) ,\omega ^{A}\right) $. This homomorphism is
\textquotedblleft equivalent\textquotedblright\ to the standard classical
homomorphism on principal fibre bundles 
\begin{equation*}
\Delta _{\left( P,P^{\prime },\omega \right) \#}:\mathsf{H}^{\bullet }\left( 
\mathfrak{g},H\right) \longrightarrow \mathsf{H}_{dR}^{\bullet }\left(
M\right) 
\end{equation*}%
given by F.~Kamber and Ph.~Tondeur \cite{K-T3}, where $\mathsf{H}^{\bullet }(%
\mathfrak{g},H)$ --- called the \textit{relative Lie algebra cohomology of }$%
\left( \mathfrak{g},H\right) $ (see \cite{Chev-Eil}, \cite{K-T3}) --- is the
cohomology space of the complex $(\bigwedge \left( \mathfrak{g}/\mathfrak{h}%
\right) ^{\ast H},d^{H})$ where $\bigwedge \left( \mathfrak{g}/\mathfrak{h}%
\right) ^{\ast H}$ is the space of invariant elements with respect to the
adjoint representation of the Lie group $H$ (see \cite[3.27]{K-T3}) and the
differential $d^{H}$ is defined by the formula:%
\begin{equation*}
\hspace{-0.2cm}\left\langle d^{H}\hspace{-0.1cm}\left( \psi \right) ,\left[
v_{1}\right] \wedge \ldots \wedge \left[ v_{k}\right] \right\rangle =%
\underset{i<j}{\sum }\left( -1\right) ^{i+j}\left\langle \psi ,\left[ \left[
v_{i},v_{j}\right] \right] \wedge \left[ v_{1}\right] \wedge \ldots \hat{%
\imath}\ldots \hat{\jmath}\ldots \wedge \left[ v_{k}\right] \right\rangle 
\end{equation*}%
for $\psi \in \bigwedge^{k}\left( \mathfrak{g}/\mathfrak{h}\right) ^{\ast H}$
and $v_{i}\in \mathfrak{g}$. The equivalence of these two characteristic
homomorphisms lies in the fact that there exists an isomorphism of algebras $%
\kappa :\mathsf{H}^{\bullet }\left( \mathfrak{g}{,H}\right) \overset{\simeq }%
{\longrightarrow }\mathsf{H}^{\bullet }\left( {{\pmb{g},}}\limfunc{A}\left(
P^{\prime }\right) \right) $ such that%
\begin{equation}
\Delta _{\left( \limfunc{A}\left( P\right) ,\limfunc{A}\left( P^{\prime
}\right) ,\omega ^{A}\right) \#}\circ \kappa =\Delta _{\left( P,P^{\prime
},\omega \right) \#}  \label{equival}
\end{equation}%
(see \cite[Theorem 6.1]{K5}). Therefore, the obtained algebras of
characteristic classes are identical. We recall that the isomorphism $\kappa 
$ on the level of cochains is defined via the isomorphism $\tilde{\kappa}%
:\left( \bigwedge \left( \mathfrak{g}/\mathfrak{h}\right) ^{\ast }\right)
^{H}\rightarrow \left( \Gamma \left( \bigwedge \left( {{\pmb{g}/\pmb{h}}}%
\right) ^{\ast }\right) \right) ^{\Gamma \left( B\right) }$ given by $\tilde{%
\kappa}\left( \psi \right) \left( x\right) =\func{Ad}_{P^{\prime },{{\pmb{g}}%
}}^{\wedge }\left( z\right) \left( \psi \right) ,$ $z\in P_{|x}^{^{\prime }},
$ where the representation $\func{Ad}_{P^{\prime },{{\pmb{g}}}}^{\wedge }\ $%
of $P^{\prime }$ on $\bigwedge^{k}\left( {{\pmb{g}}}/{{\pmb{h}}}\right)
^{\star }$ is induced by $\func{Ad}_{P^{\prime },{{\pmb{g}}}}:P^{\prime
}\rightarrow L\left( {{\pmb{g}}}/{{\pmb{h}}}\right) ,\;z\mapsto \lbrack 
\overset{\wedge }{z}],$ and $\overset{\wedge }{z}:\mathfrak{g}\overset{\cong 
}{\rightarrow }{{\pmb{g}}}_{|x},$ $v\mapsto \left[ A_{z\star v}\right] $, ($%
A_{z}:G\rightarrow P,$ $a\mapsto za$). The fact that $\tilde{\kappa}$ is an
isomorphism is obtained by Proposition 5.5.3 from \cite{K1} (just here the
assumption that $P^{\prime }$\ is connected is needed). We recall briefly
the definition of the homomorphism{\huge \ }$\Delta _{\left( P,P^{\prime
},\omega ^{A}\right) \#}${\huge \ }and reasoning giving (\ref{equival}). Let 
$\breve{\omega}:TP\rightarrow \mathfrak{g}$ denote the connection form of $%
\omega $. There exists a homomorphism of $G$-$DG$-algebras $\breve{\omega}%
_{\wedge }:\bigwedge \mathfrak{g}^{\ast }\rightarrow \Omega \left( P\right) $
(in view of the flatness of $\omega $) induced by the algebraic connection $%
\breve{\omega}:\mathfrak{g}^{\ast }\rightarrow \Omega \left( P\right) ,\
\alpha \mapsto \alpha \breve{\omega}=\langle \alpha ,\breve{\omega}\rangle $%
. The homomorphism $\breve{\omega}_{\wedge }$ is given by the formula $%
\breve{\omega}_{\wedge }\left( \phi ^{k}\right) _{z}\left(
v_{1},...,v_{k}\right) =\langle \phi ;\omega _{z}\left( v_{1}\right) \wedge
...\wedge \omega _{z}\left( v_{k}\right) \rangle $ and can be restricted to $%
H$-basic elements $\breve{\omega}_{H}:\left( \bigwedge \mathfrak{g}^{\ast
}\right) _{H}\rightarrow \Omega \left( P\right) _{H}$. According to the
isomorphisms $\left( \bigwedge \mathfrak{g}^{\ast }\right) _{H}\cong
\bigwedge \left( \mathfrak{g}/\mathfrak{h}\right) ^{\ast H}$ and $\Omega
\left( P\right) _{H}\cong \Omega \left( P/H\right) $ it gives a $DG$%
-homomorphism of algebras $\breve{\omega}_{H}:\bigwedge \left( \mathfrak{g}/%
\mathfrak{h}\right) ^{\ast H}\rightarrow \Omega \left( P/H\right) $.
Composing it with $s^{\ast }:\Omega \left( P/H\right) \rightarrow \Omega
\left( M\right) $, where $s:M\rightarrow P/H$ is the cross-section
determined by the $H$-reduction $P^{\prime }$, we obtain a homomorphism of $%
DG$-algebras $\Delta _{P,P^{\prime },\omega }:\bigwedge \left( \mathfrak{g}/%
\mathfrak{h}\right) ^{\ast H}\overset{~\breve{\omega}_{H}}{\longrightarrow }%
\Omega \left( P/H\right) \overset{~s^{\ast }}{\longrightarrow }\Omega \left(
M\right) $. Passing to cohomology we obtain $\Delta _{\left( P,P^{\prime
},\omega \right) \#}$. Because of the algebraic formula for $\breve{\omega}%
_{\wedge }$ we see that this homomorphism on the level of forms is given by%
\begin{equation*}
\left( \Delta _{P,P^{\prime },\omega }\left( \psi \right) \right) _{x}\left(
w_{1}\wedge \ldots \wedge w_{k}\right) =\left\langle \psi ,\left[ \breve{%
\omega}_{z}\left( \tilde{w}_{1}\right) \right] \wedge \ldots \wedge \left[ 
\breve{\omega}_{z}\left( \tilde{w}_{k}\right) \right] \right\rangle ,
\end{equation*}%
where $z\in P_{|x}^{\prime }$, $w_{i}\in T_{x}M$, $\tilde{w}_{i}\in
T_{z}P^{\prime }$, $\pi _{\ast }^{\prime }\tilde{w}_{i}=w_{i}$, with $\pi
^{\prime }:P^{\prime }\rightarrow M$. Therefore, the equality (\ref{equival}%
) holds (for details see \cite[Th. 6.1]{K5}).

Using the universal \textit{\ exotic characteristic homomorphism }$\Delta
_{\left( \limfunc{A}\left( P\right) ,\limfunc{A}\left( P^{\prime }\right)
\right) \#}$ for the pair of transitive Lie algebroids $\left( \limfunc{A}%
\left( P\right) ,\limfunc{A}\left( P^{\prime }\right) \right) $, $\limfunc{A}%
\left( P^{\prime }\right) \subset \limfunc{A}\left( P\right) $, we can
define the \emph{universal\ exotic characteristic homomorphism} 
\begin{equation*}
\Delta _{\left( P,P^{\prime }\right) \#}:=\Delta _{\left( \limfunc{A}\left(
P\right) ,\limfunc{A}\left( P^{\prime }\right) \right) \#}\circ \kappa :%
\mathsf{H}^{\bullet }\left( \mathfrak{g},H\right) \longrightarrow \mathsf{H}%
^{\bullet }\left( \limfunc{A}\left( P\right) \right) \longrightarrow \mathsf{%
H}_{dR}^{r\bullet }\left( P\right)
\end{equation*}%
for the reduction of a principal fibre bundle $P^{\prime }\subset P$ (where $%
\mathsf{H}_{dR}^{r\bullet }\left( P\right) $ is the space of cohomology of
right-invariant differential forms on $P$; we recall that $\mathsf{H}%
_{dR}^{r\bullet }\left( P\right) :=\mathsf{H}^{\bullet }\left( {\Omega }%
^{r}\left( P\right) \right) \simeq \mathsf{H}_{dR}^{\bullet }\left( P\right) 
$ if $G$ is compact and connected).

\begin{theorem}
The homomorphism $\Delta _{\left( P,P^{\prime }\right) \#}$ on the level of
differential forms is given by the following formula: 
\begin{equation*}
\left( \Delta _{P,P^{\prime }}\psi \right) _{z}\hspace{-0.1cm}\left(
w_{1}\wedge \ldots \wedge w_{k}\right) =\left\langle \psi ,\left[ -\lambda
_{z}\left( w_{1}\right) \right] \wedge \ldots \wedge \left[ -\lambda
_{z}\left( w_{k}\right) \right] \right\rangle ,
\end{equation*}%
where $\lambda $ is the form of any connection on $P$ extending an arbitrary
connection on $P^{\prime }$.
\end{theorem}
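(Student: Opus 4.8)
The plan is to compute $\Delta_{P,P'}\psi$ by unwinding the definition $\Delta_{(P,P')\#}=\Delta_{(\limfunc{A}(P),\limfunc{A}(P'))\#}\circ\kappa$ at the cochain level and transporting the resulting algebroid formula back to $P$. Write $A=\limfunc{A}(P)$ and $B=\limfunc{A}(P')$, and recall that right-invariant forms on $P$ are canonically the $A$-forms, $\Omega^{r}(P)\cong\Omega(A)$, the right-invariant $\theta$ corresponding to the $A$-form whose value at $x=\pi(z)$ on the classes $[w_{i}]_{z}\in A_{|x}$ of $w_{i}\in T_{z}P$ equals $\theta_{z}(w_{1},\ldots,w_{k})$. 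Under this identification $\Delta_{P,P'}\psi$ is the right-invariant form associated with $\Delta_{(A,B)}(\tilde{\kappa}(\psi))$, so by the formula for the universal homomorphism on forms established above,
\[
(\Delta_{P,P'}\psi)_{z}(w_{1}\wedge\ldots\wedge w_{k})=\left\langle \tilde{\kappa}(\psi)_{x},[-\breve{\lambda}([w_{1}]_{z})]\wedge\ldots\wedge[-\breve{\lambda}([w_{k}]_{z})]\right\rangle ,
\]
where $\breve{\lambda}:A\to\pmb{g}$ is the connection form of the algebroid connection $F\to A$ induced by $\lambda$ (it factors through $B$ since $\lambda$ extends a connection on $P'$).

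The first and main step is to express the algebroid connection form $\breve{\lambda}$ through the principal connection form $\lambda$. I would prove that for every $z\in P$ and $w\in T_{z}P$,
\[
\breve{\lambda}([w]_{z})=\hat{z}\bigl(\lambda_{z}(w)\bigr),\qquad \hat{z}:\mathfrak{g}\overset{\cong}{\longrightarrow}\pmb{g}_{|x},\ v\mapsto[A_{z\ast}v],
\]
$\hat{z}$ being the isomorphism from the definition of $\tilde{\kappa}$. The content here is well-definedness: the right-hand side must be independent of the representative $w$ of $[w]_{z}$. This follows from the $\func{Ad}$-equivariance of $\lambda$ together with the fundamental-vector-field identity $R_{a\ast}\zeta_{v}(z)=\zeta_{\func{Ad}_{a^{-1}}v}(za)$, which is exactly the relation $\hat{z}=\hat{za}\circ\func{Ad}_{a^{-1}}$; since $\lambda$ reproduces fundamental vertical vectors, the induced algebroid connection has connection form of the stated shape.

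The second step evaluates the displayed pairing at a point $z\in P'_{|x}$ and uses this same $z$ as the reference point in $\tilde{\kappa}(\psi)_{x}=\func{Ad}^{\wedge}_{P',\pmb{g}}(z)(\psi)$. By construction $\func{Ad}^{\wedge}_{P',\pmb{g}}(z)$ sends $\psi$ to its pullback along $(\hat{z})^{-1}$ on $\bigwedge(\pmb{g}/\pmb{h})_{|x}$, while Step~1 gives $[-\breve{\lambda}([w_{i}]_{z})]=\hat{z}([-\lambda_{z}(w_{i})])$ in $(\pmb{g}/\pmb{h})_{|x}$ (here $z\in P'$ guarantees that $\hat{z}$ carries $\mathfrak{h}$ onto $\pmb{h}_{|x}$ and so descends to the quotients). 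Hence the factor $\hat{z}$ cancels in the pairing and
\[
(\Delta_{P,P'}\psi)_{z}(w_{1}\wedge\ldots\wedge w_{k})=\left\langle \psi,[-\lambda_{z}(w_{1})]\wedge\ldots\wedge[-\lambda_{z}(w_{k})]\right\rangle ,
\]
which is the asserted formula. Right-invariance of this form and its independence of the chosen extension $\lambda$ are then automatic from the intrinsic definition of the left-hand side: for $z=z'a$ with $z'\in P'$ one has $\func{Ad}_{a}\lambda_{z}(w_{i})=\lambda_{z'}(R_{a^{-1}\ast}w_{i})$, which is precisely the right-invariance relation that extends the value off $P'$.

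I expect the only real obstacle to be Step~1, namely pinning down the exact dictionary between the two connection forms $\breve{\lambda}$ and $\lambda$ and checking that it is well defined on $G$-classes; this is where the $\func{Ad}$-equivariance and the fundamental-vector-field bookkeeping must be handled with care. Once that identification is in place, the theorem follows by direct substitution into the already-established formula for $\Delta_{(A,B)}$, exactly as in the comparison with the flat regular case.
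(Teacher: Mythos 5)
Your proposal is correct, and it supplies essentially the argument the paper intends but leaves implicit: the theorem is stated without proof, immediately after the paper sets up exactly the ingredients you use, namely $\tilde{\kappa}$ defined through $\hat{z}:\mathfrak{g}\to{{\pmb{g}}}_{|x}$, the cochain formula for the universal homomorphism $\Delta _{\left( A,B\right) }$ with $A=\limfunc{A}\left( P\right) $, $B=\limfunc{A}\left( P^{\prime }\right) $, and the identification of $\Omega \left( A\right) $ with the right-invariant forms ${\Omega }^{r}\left( P\right) $. Your key identity $\breve{\lambda}\left( \left[ w\right] _{z}\right) =\hat{z}\left( \lambda _{z}\left( w\right) \right) $ and the cancellation of $\hat{z}$ against $\func{Ad}_{P^{\prime },{{\pmb{g}}}}^{\wedge }\left( z\right) $ at $z\in P_{|x}^{\prime }$ are exactly the intended computation, and you also correctly handled the one subtlety the bare statement glosses over: the displayed formula holds literally at points of $P^{\prime }$ (where $\hat{z}$ carries $\mathfrak{h}$ onto ${{\pmb{h}}}_{|x}$ and so descends to quotients), and it determines the form at a general point $z=z^{\prime }a$ via the equivariance relation $\func{Ad}_{a}\lambda _{z}\left( w\right) =\lambda _{z^{\prime }}\left( R_{a^{-1}\ast }w\right) $, i.e.\ by right-invariance, since the right-hand side as written is not $G$-invariant off $P^{\prime }$.
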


The commutativity of the diagram 
\begin{center}
\begin{picture}(2200,850)
\setsqparms[1`1`0`1;1200`550]
\putsquare(0,150)[ \mathsf{H}^{\bullet }\left( \mathfrak{g},H\right) `\mathsf{H}_{dR}^{r\hspace{0.05cm}\bullet }\left( P\right)` \mathsf{H}^{\bullet }\left( {{\pmb{g},}}\limfunc{A}\left( P^{\prime }\right) \right) `\mathsf{H}^{\bullet }\left( \limfunc{A}\left( P\right) \right) ;  \Delta _{\left( P,P^{\prime }\right) \#} `  \kappa ` ` \Delta _{\left( \limfunc{A}\left( P\right) ,\limfunc{A}\left( P^{\prime }\right) \right) \#} ]
\put(80,420){\makebox(0,0){${\cong }$}}
\put(1180,230){\line(0,1){390}}
\put(1220,230){\line(0,1){390}}
\setsqparms[1`0`0`0;1000`550]
\putsquare(1360,150)[ `\mathsf{H}_{dR}^{\bullet }\left( P\right) ``\mathsf{H}_{dR}^{\bullet }\left( M\right) ;  `   ` ` \omega ^{A\#} ]
\put(1450,150){\vector(1,0){700}}
\put(1340,620){\vector(2,-1){820}}
\put(1530,420){ \makebox(0,0){{$\omega ^{\#}$}}}
\end{picture}
\end{center}
where $\omega ^{\#}$ on the level of right-invariant differential forms ${%
\Omega }^{r}\left( P\right) $ is given as the pullback of differential
forms:\ 
\begin{equation*}
\omega ^{\ast }:{\Omega }^{r}\left( P\right) \longrightarrow {\Omega }\left(
M\right) ,\ \ \omega ^{\ast }\left( \phi \right) _{x}\left( u_{1}\wedge
\ldots \wedge u_{k}\right) =\phi _{z}\left( \tilde{u}_{1}\wedge ~\ldots
~\wedge \tilde{u}_{k}\right) ,
\end{equation*}%
where $z\in P_{|x}$,\ $\tilde{u}_{i}$ is the $\omega $-horizontal lift of $%
u_{i}$, yields the following theorem.

\begin{theorem}
The homomorphism $\Delta _{\left( P,P^{\prime },\omega \right) \#}$\ is
factorized by $\Delta _{\left( P,P^{\prime }\right) \#}$, i.e. the diagram
below commutes 
\begin{center}
\settriparms[-1`1`1;]
\Atriangle[\mathsf{H}^{r \bullet}_{dR}(P)`\mathsf{H}^{\bullet}({\mathfrak{g}},H)`\mathsf{H}^{\bullet}_{dR}(M).;{\Delta_{(P,P^{\prime})\#}}`
{\omega}^{\#}`{\Delta_{(P,P^{\prime},\omega)\#}}]
\end{center}%
\end{theorem}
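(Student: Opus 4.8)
The plan is to reduce this principal-bundle factorization to the Lie-algebroid factorization already established in diagram~(\ref{deltacohomology}), transport it through the algebra isomorphism $\kappa$, and then match the two descriptions of the pullback $\omega^{\#}$. First I would apply (\ref{deltacohomology}) to the FS-Lie algebroid $\left(\limfunc{A}(P),\limfunc{A}(P^{\prime}),\omega^{A}\right)$, that is with $A=\limfunc{A}(P)$, $B=\limfunc{A}(P^{\prime})$, $L=TM$ and $\nabla=\omega^{A}$, obtaining
\begin{equation*}
\Delta_{\left(\limfunc{A}(P),\limfunc{A}(P^{\prime}),\omega^{A}\right)\#}=\omega^{A\#}\circ\Delta_{\left(\limfunc{A}(P),\limfunc{A}(P^{\prime})\right)\#}.
\end{equation*}
Precomposing with $\kappa$, using the equivalence (\ref{equival}) on the left and the definition $\Delta_{(P,P^{\prime})\#}=\Delta_{\left(\limfunc{A}(P),\limfunc{A}(P^{\prime})\right)\#}\circ\kappa$ of the universal homomorphism on the right, this yields
\begin{equation*}
\Delta_{(P,P^{\prime},\omega)\#}=\omega^{A\#}\circ\Delta_{(P,P^{\prime})\#}.
\end{equation*}

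Hence the triangle commutes as soon as $\omega^{A\#}$ is identified with $\omega^{\#}$ under the canonical isomorphism $\mathsf{H}^{\bullet}\left(\limfunc{A}(P)\right)\cong\mathsf{H}_{dR}^{r\bullet}(P)$; this is exactly the compatibility recorded in the commutative diagram displayed just before the statement, where $\omega^{\#}$ is realized on right-invariant forms as the pullback $\omega^{\ast}$ along the $\omega$-horizontal lift. Establishing this compatibility is the only nonformal step, and I expect it to be the main obstacle.

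To settle it, and to give a self-contained check, I would verify the identity directly on forms, comparing the explicit formula for $\Delta_{P,P^{\prime}}$ from the preceding theorem with the Kamber--Tondeur formula for $\Delta_{P,P^{\prime},\omega}$ recalled above. Fix $x\in M$, vectors $w_{1},\ldots,w_{k}\in T_{x}M$ and a point $z\in P^{\prime}_{|x}$; let $\tilde{w}_{i}\in T_{z}P$ be the $\omega$-horizontal lift of $w_{i}$ and $\hat{w}_{i}\in T_{z}P^{\prime}$ the $\lambda$-horizontal lift inside $P^{\prime}$, where $\lambda$ is a connection form on $P$ extending one on $P^{\prime}$. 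By the definition of $\omega^{\ast}$ and the preceding theorem,
\begin{align*}
\left(\omega^{\ast}\Delta_{P,P^{\prime}}\psi\right)_{x}(w_{1}\wedge\ldots\wedge w_{k}) &=\left(\Delta_{P,P^{\prime}}\psi\right)_{z}(\tilde{w}_{1}\wedge\ldots\wedge\tilde{w}_{k})\\
&=\left\langle\psi,[-\lambda_{z}(\tilde{w}_{1})]\wedge\ldots\wedge[-\lambda_{z}(\tilde{w}_{k})]\right\rangle.
\end{align*}
Writing $\tilde{w}_{i}=\hat{w}_{i}+A_{z\ast}\sigma_{i}$ with $A_{z\ast}\sigma_{i}$ the vertical remainder ($\sigma_{i}\in\mathfrak{g}$), the standard relations $\lambda_{z}(\hat{w}_{i})=0$, $\breve{\omega}_{z}(\tilde{w}_{i})=0$ and $\lambda_{z}(A_{z\ast}\sigma_{i})=\sigma_{i}=\breve{\omega}_{z}(A_{z\ast}\sigma_{i})$ give $-\lambda_{z}(\tilde{w}_{i})=-\sigma_{i}=\breve{\omega}_{z}(\hat{w}_{i})$, so that $[-\lambda_{z}(\tilde{w}_{i})]=[\breve{\omega}_{z}(\hat{w}_{i})]$ in $\mathfrak{g}/\mathfrak{h}$. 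The displayed expression therefore equals $\left\langle\psi,[\breve{\omega}_{z}(\hat{w}_{1})]\wedge\ldots\wedge[\breve{\omega}_{z}(\hat{w}_{k})]\right\rangle$, which is precisely $\left(\Delta_{P,P^{\prime},\omega}\psi\right)_{x}(w_{1}\wedge\ldots\wedge w_{k})$.

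Since $\Delta_{P,P^{\prime}}$, $\omega^{\ast}$ and $\Delta_{P,P^{\prime},\omega}$ are cochain maps, this equality of forms descends to cohomology and gives the commutativity of the triangle. The delicate point, where I expect the real work to lie, is checking that the right-hand side is genuinely well defined on $M$: independence of the chosen lift $\hat{w}_{i}\in T_{z}P^{\prime}$ (changing it by an $\mathfrak{h}$-vertical vector alters $\breve{\omega}_{z}(\hat{w}_{i})$ only by an element of $\mathfrak{h}$, which vanishes in $\mathfrak{g}/\mathfrak{h}$) and independence of the point $z\in P^{\prime}_{|x}$ (handled by the right-invariance of $\Delta_{P,P^{\prime}}\psi$ together with the $H$-invariance of $\psi$).
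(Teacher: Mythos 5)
Your proposal is correct, and its first half is precisely the paper's own proof: the paper deduces the theorem from the commutativity of the diagram displayed just before it, which encodes exactly your chain of reductions --- the factorization (\ref{deltacohomology}) applied to $\left(\limfunc{A}(P),\limfunc{A}(P^{\prime }),\omega ^{A}\right)$, the equivalence (\ref{equival}), the definition $\Delta _{\left( P,P^{\prime }\right) \#}=\Delta _{\left( \limfunc{A}\left( P\right) ,\limfunc{A}\left( P^{\prime }\right) \right) \#}\circ \kappa $, and the identification of $\omega ^{A\#}$ with $\omega ^{\#}$ under $\mathsf{H}^{\bullet }\left( \limfunc{A}\left( P\right) \right) \cong \mathsf{H}_{dR}^{r\bullet }\left( P\right) $. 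Where you go beyond the paper is the second half: the paper asserts that compatibility through the diagram and never computes anything pointwise, whereas you give a self-contained chain-level verification that $\omega ^{\ast }\circ \Delta _{P,P^{\prime }}=\Delta _{P,P^{\prime },\omega }$. That computation is sound: writing $\tilde{w}_{i}=\hat{w}_{i}+A_{z\ast }\sigma _{i}$, the relations $\lambda _{z}(\hat{w}_{i})=0$, $\breve{\omega}_{z}(\tilde{w}_{i})=0$ and $\lambda _{z}(A_{z\ast }\sigma _{i})=\sigma _{i}=\breve{\omega}_{z}(A_{z\ast }\sigma _{i})$ give $-\lambda _{z}(\tilde{w}_{i})=-\sigma _{i}=\breve{\omega}_{z}(\hat{w}_{i})$ already in $\mathfrak{g}$ (so the equality of classes in $\mathfrak{g}/\mathfrak{h}$ is automatic), and since all three maps are cochain maps the identity descends to cohomology, proving the theorem directly on $P$ without invoking the algebroid machinery or Theorem 6.1 of \cite{K5} hidden behind (\ref{equival}). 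What the paper's route buys instead is that the statement falls out as a purely formal specialization of the universal factorization (\ref{deltacohomology}); what your direct argument buys is independence from the cited equivalence and an explicit check of the one step the paper leaves implicit. Your closing concern about well-definedness of the Kamber--Tondeur formula (independence of the lift and of $z\in P_{|x}^{\prime }$) is not a real obstacle: it is part of the paper's recalled description of $\Delta _{P,P^{\prime },\omega }$, and in your computation the specific choice of the $\lambda$-horizontal lift removes any ambiguity anyway.
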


From the above we get a corollary on the existing of the new universal
exotic characteristic homomorphism for a $G$-principal fibre bundle $P$ and
its $H$-reduction $P^{\prime }$.

\begin{corollary}
\label{11 copy(1)}If $G$ is a compact, connected Lie group and $P^{\prime }$
is a connected $H$-reduction in a $G$-principal bundle $P$, $H\subset G$,
then there exists a homomorphism of algebras%
\begin{equation*}
\Delta _{\left( P,P^{\prime }\right) \#}:\mathsf{H}^{\bullet }\left( 
\mathfrak{g},H\right) \longrightarrow \mathsf{H}_{dR}^{\bullet }\left(
P\right)
\end{equation*}%
\emph{(}called a universal exotic characteristic homomorphism for the pair $%
P^{\prime }\subset P$\emph{) }such that for arbitrary flat connection $%
\omega $ in $P$, the characteristic homomorphism $\Delta _{\left(
P,P^{\prime },\omega \right) \#}:\mathsf{H}^{\bullet }\left( \mathfrak{g}%
,H\right) \rightarrow \mathsf{H}_{dR}^{\bullet }\left( M\right) $ is
factorized by $\Delta _{\left( P,P^{\prime }\right) \#}$, i.e. the following
diagram is commutative 
\begin{center}
\settriparms[-1`1`1;]
\Atriangle[\mathsf{H}^{\bullet}_{dR}(P)`\mathsf{H}^{\bullet}({\mathfrak{g}},H)`\mathsf{H}^{\bullet}_{dR}(M).;{\Delta_{(P,P^{\prime})\#}}`{\omega}^{\#}`{\Delta_{(P,P^{\prime},\omega)\#}}]
\end{center}%
\end{corollary}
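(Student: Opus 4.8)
The plan is to obtain this Corollary from the preceding Theorem by transporting its conclusion across the canonical isomorphism $\mathsf{H}_{dR}^{r\bullet }\left( P\right) \cong \mathsf{H}_{dR}^{\bullet }\left( P\right) $, which is available precisely because $G$ is assumed compact and connected. Recall that the universal exotic characteristic homomorphism was constructed as a map $\Delta _{\left( \limfunc{A}\left( P\right) ,\limfunc{A}\left( P^{\prime }\right) \right) \#}\circ \kappa $ landing in the right-invariant de Rham cohomology $\mathsf{H}_{dR}^{r\bullet }\left( P\right) =\mathsf{H}^{\bullet }\left( \Omega ^{r}\left( P\right) \right) $, and that the preceding Theorem already exhibits the factorization of $\Delta _{\left( P,P^{\prime },\omega \right) \#}$ through it at that level. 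Thus only the passage from right-invariant to ordinary de Rham cohomology remains to be carried out.

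First I would recall the classical fact that, for a principal $G$-bundle with $G$ compact and connected, the inclusion $\Omega ^{r}\left( P\right) \hookrightarrow \Omega \left( P\right) $ of right-invariant forms into all differential forms is a quasi-isomorphism. The inclusion is a cochain map since the exterior derivative commutes with the right $G$-action; compactness of $G$ lets one average an arbitrary form over the Haar measure to produce a right-invariant one, giving a cochain map in the reverse direction; and connectedness of $G$ guarantees that this averaging operator is cochain-homotopic to the identity on $\Omega \left( P\right) $, the homotopy being assembled from a path joining a group element to the identity. Hence the induced map
\begin{equation*}
\mathsf{H}_{dR}^{r\bullet }\left( P\right) \overset{\cong }{\longrightarrow }\mathsf{H}_{dR}^{\bullet }\left( P\right)
\end{equation*}
is an isomorphism of algebras.

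I would then define the homomorphism $\Delta _{\left( P,P^{\prime }\right) \#}:\mathsf{H}^{\bullet }\left( \mathfrak{g},H\right) \rightarrow \mathsf{H}_{dR}^{\bullet }\left( P\right) $ as the composite of $\Delta _{\left( \limfunc{A}\left( P\right) ,\limfunc{A}\left( P^{\prime }\right) \right) \#}\circ \kappa $ with the isomorphism above, which settles the existence statement. The asserted factorization then follows directly from the preceding Theorem: its triangle expresses $\Delta _{\left( P,P^{\prime },\omega \right) \#}=\omega ^{\#}\circ \Delta _{\left( P,P^{\prime }\right) \#}$ with top vertex $\mathsf{H}_{dR}^{r\bullet }\left( P\right) $, and replacing that vertex by the isomorphic $\mathsf{H}_{dR}^{\bullet }\left( P\right) $ yields exactly the triangle of the Corollary.

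The step I expect to demand the most care is the compatibility of $\omega ^{\#}$ with this identification. On right-invariant forms $\omega ^{\#}$ is realized as the pullback $\omega ^{\ast }$ along the flat connection, and one must check that this pullback is compatible with averaging, so that it descends consistently along $\mathsf{H}_{dR}^{r\bullet }\left( P\right) \cong \mathsf{H}_{dR}^{\bullet }\left( P\right) $ and still makes the enlarged triangle commute. Granting this naturality, which is the genuine content of the argument, the commutativity of the triangle is inherited from the preceding Theorem and the proof is complete; the hypotheses on $G$ enter only through the isomorphism $\mathsf{H}_{dR}^{r\bullet }\left( P\right) \cong \mathsf{H}_{dR}^{\bullet }\left( P\right) $.
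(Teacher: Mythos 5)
Your proposal is correct and takes essentially the same route as the paper: the Corollary is obtained there, exactly as you do, by combining the preceding factorization Theorem (whose triangle has top vertex $\mathsf{H}_{dR}^{r\bullet}\left( P\right) $) with the isomorphism $\mathsf{H}_{dR}^{r\bullet}\left( P\right) \simeq \mathsf{H}_{dR}^{\bullet }\left( P\right) $, which the paper invokes precisely under the hypothesis that $G$ is compact and connected. Your extra care about the compatibility of $\omega ^{\#}$ with this identification is harmless but not really needed, since $\omega ^{\#}$ is only defined on right-invariant forms and the Corollary's $\omega ^{\#}$ is understood as its transport through that isomorphism.
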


\section{Comparison with the Crainic Framework\label{Crsubsection}}

\subsection{The Crainic Approach\label{Crainicclasses}}

We briefly explain the Crainic approach to characteristic classes of a
representation (\cite{Cr-1}). Primarily we notice that arbitrary
representation $\nabla _{\xi }\nu $ of an arbitrary Lie algebroid $L$\
(irregular in general) in a vector bundle $f\ $can be described by a
homomorphism of Lie algebroids $\nabla :L\rightarrow \limfunc{A}\left( 
\mathfrak{f}\right) $ (i.e. a flat $L$-connection in $\limfunc{A}\left( 
\mathfrak{f}\right) $). The Crainic classes of a representation $\nabla $
live in the cohomology algebra $\mathsf{H}^{\bullet }\left( L\right) $ of
the Lie algebroid $L$. In the simplest case of the trivial vector bundle $%
\mathfrak{f}=M\times V$ ($\dim V=n$) they are constructed as follows: For a
frame $\left\{ e_{1},\ldots ,e_{n}\right\} $ of $\mathfrak{f}$ we introduce
a matrix $\omega =[\omega _{j}^{i}]\in M_{n\times n}\left( \Omega \left(
L\right) \right) $ of $1$-forms on $L$ such that $\nabla _{\xi
}e_{j}=\sum_{i}\omega _{j}^{i}\left( \xi \right) \cdot e_{i}$\ for all $\xi
\in \Gamma \left( L\right) $. Clearly $\func{tr}\left( \omega \right) =\func{%
tr}\left( \tilde{\omega}\right) $, where $\tilde{\omega}=\frac{\omega
+\omega ^{T}}{2}$ is the symmetrization of $\omega $, and the flatness
condition implies that for natural numbers $k$,%
\begin{equation}
\func{tr}(\tilde{\omega}^{2k-1})  \label{trace}
\end{equation}%
are closed forms on $L$. Their cohomology classes are independent of the
choice of frames. These classes vanish if $\nabla $ is a Riemannian
connection with respect to some Riemannian metric $h$ in $\mathfrak{f}$. A
Riemannian connection is a connection in a Riemannian reduction $\limfunc{L}%
\left( \mathfrak{f},\left\{ h\right\} \right) $ of the principal fibre
bundle of frames $\limfunc{L}\left( \mathfrak{f}\right) $. For an arbitrary
vector bundle $\mathfrak{f}$ Crainic uses a local construction (a suitable
cocycle) and the \v{C}ech double complex $\check{C}^{\ast }\left( \mathcal{U}%
,C^{\ast }\left( L\right) \right) $ together with the Mayer-Vietoris
argument. For $L=TM$ the usual exotic characteristic classes of flat vector
bundles $\mathfrak{f}$ are obtained. An explicit formula for an arbitrary $L$%
-flat real vector bundle $\left( \mathfrak{f},\nabla \right) $ is based on
the observation that in a local orthonormal frame $\left\{ e_{1},\ldots
,e_{n}\right\} $ of $\left( \mathfrak{f},\left\{ h\right\} \right) $ the
symmetrization $\tilde{\omega}\ $of $\omega $ is equal to the matrix of the
symmetric-values form $\omega \left( \mathfrak{f},h\right) =\frac{1}{2}%
{\large (}\nabla -\nabla ^{h}{\large )}$, where $\nabla ^{h}$ is the adjoint 
$L$-connection induced by the metric $h$. The adjoint connection $\nabla
^{h} $ remains flat. The classes will be given as in (\ref{trace}), with $%
\tilde{\omega}$ replaced by $\omega \left( \mathfrak{f},h\right) $. One
explicit formula up to a constant uses the Chern-Simons transgression
differential forms $\limfunc{cs}\nolimits_{k}$ for suitable two connections
and is given by (see \cite{Cr-F})%
\begin{equation*}
u_{2k-1}\left( \mathfrak{f}\right) =\left[ u_{2k-1}\left( \mathfrak{f}%
,\nabla \right) \right] \in \mathsf{H}^{2k-1}\left( L\right) ,
\end{equation*}%
where 
\begin{equation}
u_{2k-1}\left( \mathfrak{f},\nabla \right) =\left( -1\right) ^{\frac{k+1}{2}}%
\limfunc{cs}\nolimits_{k}{\large (}\nabla ,\nabla ^{h}{\large )}  \label{u}
\end{equation}%
and $k$ is an odd natural ($u_{2k-1}\left( \mathfrak{f}\right) $ is trivial
if $k$ is even). We recall that 
\begin{eqnarray*}
\limfunc{cs}\nolimits_{k}{\large (}\nabla ,\nabla ^{h}{\large )}
&=&\dint_{0}^{1}\limfunc{ch}\nolimits_{k}{\large (}\nabla ^{\limfunc{aff}}%
{\large )}\in \Omega ^{2k-1}\left( L\right) , \\
\left( \int_{0}^{1}\limfunc{ch}\nolimits_{k}{\large (}\nabla ^{\limfunc{aff}}%
{\large )}\right) _{\xi _{1},\ldots ,\xi _{2k-1}} &=&\int_{0}^{1}\limfunc{ch}%
\nolimits_{k}(\nabla ^{\limfunc{aff}})_{\frac{\partial }{\partial t},\xi
_{1},\ldots ,\xi _{2k-1}}|_{\left( t,\bullet \right) }dt,\ \ \ \ \ \xi
_{i}\in \Gamma \left( L\right) ,
\end{eqnarray*}%
where $\nabla ^{\limfunc{aff}}=\left( 1-t\right) \cdot \tilde{\nabla}+t\cdot 
\tilde{\nabla}^{h}:T\mathbb{R}\times A\longrightarrow \limfunc{A}\left( 
\func{pr}_{2}^{\;\ast }\mathfrak{f}\right) $ is the affine combination of
the connections $\tilde{\nabla}$ and $\tilde{\nabla}^{h}$ whereas $\limfunc{%
ch}\nolimits_{k}\left( \nabla \right) =\limfunc{Tr}{\Large (}{\large (}%
R^{\nabla }{\large )}^{k}{\Large )}$. The connection $\tilde{\nabla}:T%
\mathbb{R}\times L\rightarrow A\left( \limfunc{pr}\nolimits_{2}^{\ast }%
\mathfrak{f}\right) $, $\tilde{\nabla}_{\left( v_{t},\xi _{x}\right) }\left(
\nu \circ \limfunc{pr}\nolimits_{2}\right) =\nabla _{\xi _{x}}\left( \nu
\right) ,$ is the lifting of the connection $\nabla $ through the projection 
$\limfunc{pr}_{2}:\mathbb{R}\times M\rightarrow M$. If $\nabla $ is flat,
then $\tilde{\nabla}$ is flat, too.

\subsection{The Secondary Characteristic Homomorphism for Riemannian
Reductions\label{s341}}

Let $\left( \mathfrak{f},\left\{ h\right\} \right) $ denote a vector bundle
of the rank $n$ over a manifold $M$ with a Riemannian metric $h$. The metric 
$h$ yields \cite{K3} the Lie subalgebroid $B=\limfunc{A}\left( \mathfrak{f}%
,\left\{ h\right\} \right) $ of the algebroid $\limfunc{A}\left( \mathfrak{f}%
\right) $ of the vector bundle $\mathfrak{f}$ and a$\ $reduction $\limfunc{L}%
\left( \mathfrak{f},\left\{ h\right\} \right) $ of the frames bundle $%
\limfunc{L}\mathfrak{f}$ of $\mathfrak{f}$; ${\large (}u:\mathbb{R}%
^{n}\rightarrow \mathfrak{f}_{|x}{\large )}\in \limfunc{L}\left( \mathfrak{f}%
,\left\{ h\right\} \right) $ if and only if $u$ is an isometry. Taking the
canonical isomorphism $\Phi _{\mathfrak{f}}:\limfunc{A}\left( \limfunc{L}%
\mathfrak{f}\right) \rightarrow \limfunc{A}\left( \mathfrak{f}\right) $ of
Lie algebroids (\cite{K1}) we have $\limfunc{A}\left( \mathfrak{f,}\left\{
h\right\} \right) =\Phi _{\mathfrak{f}}\left[ \limfunc{A}\left( \limfunc{L}%
\left( \mathfrak{f,}\left\{ h\right\} \right) \right) \right] $. We observe
that $\mathcal{\alpha }\in \Gamma \left( \limfunc{A}\left( \mathfrak{f}%
\right) \right) $ belongs to $\Gamma \left( \limfunc{A}\left( \mathfrak{f,}%
\left\{ h\right\} \right) \right) $ if and only if for any cross-sections $%
\nu ,\mu \in \Gamma \left( \mathfrak{f}\right) $ the formula $h\left( 
\mathcal{\alpha }\left( \nu \right) ,\mu \right) =\left( \#\alpha \right)
\left( h\left( \nu ,\mu \right) \right) -h\left( \nu ,\mathcal{\alpha }%
\left( \mu \right) \right) $ holds. The Atiyah sequences for $\limfunc{A}%
\left( \mathfrak{f}\right) $ and $\limfunc{A}\left( \mathfrak{f,}\left\{
h\right\} \right) $ are 
\begin{equation*}
0\longrightarrow \limfunc{End}\left( \mathfrak{f}\right) \overset{i}{%
\longrightarrow }\limfunc{A}\left( \mathfrak{f}\right) \overset{\pi }{%
\longrightarrow }TM\longrightarrow 0,
\end{equation*}%
\begin{equation*}
0\longrightarrow \limfunc{Sk}\left( \mathfrak{f}\right) \longrightarrow 
\limfunc{A}\left( \mathfrak{f,}\left\{ h\right\} \right) \longrightarrow
TM\longrightarrow 0,
\end{equation*}%
where $\limfunc{Sk}\left( \mathfrak{f}\right) \subset \limfunc{End}\left( 
\mathfrak{f}\right) $ is the vector subbundle of $h$-skew symmetric
endomorphisms. Let $L$ be a Lie algebroid over $M$ and $\nabla :L\rightarrow 
\limfunc{A}\left( \mathfrak{f}\right) $ any flat $L$-connection in $%
\mathfrak{f}$. Let us consider FS-Lie algebroids $\left( \left( \limfunc{A}%
\left( \mathfrak{f}\right) ,\limfunc{A}\left( \mathfrak{f},\left\{ h\right\}
\right) \right) ,\nabla \right) $ and $\left( \limfunc{A}\left( \mathfrak{f}%
\right) ,\limfunc{A}\left( \mathfrak{f},\left\{ h\right\} \right) ,\limfunc{%
id}\nolimits_{A\left( \mathfrak{f}\right) }\right) $ and theirs secondary
characteristic homomorphisms denote, for shortness, by%
\begin{equation*}
\Delta _{\#}:\mathsf{H}^{\bullet }\left( \limfunc{End}\mathfrak{f},\limfunc{A%
}\left( \mathfrak{f},\left\{ h\right\} \right) \right) \longrightarrow 
\mathsf{H}^{\bullet }\left( L\right) ,
\end{equation*}%
\begin{equation*}
\Delta _{o\#}:\mathsf{H}^{\bullet }\left( \limfunc{End}\mathfrak{f},\limfunc{%
A}\left( \mathfrak{f},\left\{ h\right\} \right) \right) \longrightarrow 
\mathsf{H}^{\bullet }\left( \limfunc{A}\left( \mathfrak{f}\right) \right) ,
\end{equation*}%
respectively and take into the consideration the isomorphism%
\begin{equation*}
\kappa :\mathsf{H}^{\bullet }\left( \mathfrak{gl}\left( n,\mathbb{R}\right)
,O\left( n\right) \right) \overset{\cong }{\longrightarrow }\mathsf{H}%
^{\bullet }\left( \limfunc{End}\mathfrak{f},\limfunc{A}\left( \mathfrak{f}%
,\left\{ h\right\} \right) \right)
\end{equation*}%
of algebras described in Section \ref{universal}. If the vector bundle $%
\mathfrak{f}$ is nonorientable, then%
\begin{equation*}
\mathsf{H}^{\bullet }\left( \limfunc{End}\mathfrak{f},\limfunc{A}\left( 
\mathfrak{f},\left\{ h\right\} \right) \right) \overset{\kappa }{\cong }%
\mathsf{H}^{\bullet }\left( \mathfrak{gl}\left( n,\mathbb{R}\right) ,O\left(
n\right) \right) \cong \bigwedge \left( y_{1},y_{3},\ldots ,y_{n^{\prime
}}\right) ,
\end{equation*}%
where $n^{\prime }$ is the largest odd integer $\leq n$ ($n^{\prime }=2\left[
\frac{n+1}{2}\right] -1$) and $y_{2k-1}\in \mathsf{H}^{4k-3}\left( \limfunc{%
End}\mathfrak{f},\limfunc{A}\left( \mathfrak{f},\left\{ h\right\} \right)
\right) $ is represented by the multilinear trace form $\widetilde{y}%
_{2k-1}\in \Gamma \left( \bigwedge\nolimits^{4k-3}\left( \limfunc{End}%
\mathfrak{f/}\limfunc{Sk}\mathfrak{f}\right) ^{\ast }\right) $, $k\in
\left\{ 1,2,\ldots ,\left[ \frac{n+1}{2}\right] \right\} $, see \cite[6.31,
p. 142]{K-T3}.

In the case of an oriented vector bundle $\mathfrak{f}$ with a volume form $%
\func{v}$, the metric $h$ and $\func{v}$ induce an $SO\left( n,\mathbb{R}%
\right) $-reduction $\limfunc{L}\left( \mathfrak{f},\left\{ h,\func{v}%
\right\} \right) $ of the frames bundle $\limfunc{L}\mathfrak{f}$ of $%
\mathfrak{f}$; $\ {\large (}u:\mathbb{R}^{n}\rightarrow \mathfrak{f}_{|x}%
{\large )}\in \limfunc{L}\left( \mathfrak{f},\left\{ h,\func{v}\right\}
\right) $ if and only if $u$ is an isometry keeping the orientation.
Clearly, $\limfunc{A}\left( \mathfrak{f,}\left\{ h,\func{v}\right\} \right) =%
\limfunc{A}\left( \mathfrak{f,}\left\{ h\right\} \right) $, and hence $%
\mathsf{H}^{\bullet }\left( \limfunc{End}\mathfrak{f},\limfunc{A}\left( 
\mathfrak{f},\left\{ h\right\} \right) \right) \cong \mathsf{H}^{\bullet
}\left( \limfunc{End}\mathfrak{f},\limfunc{A}\left( \mathfrak{f},\left\{ h,%
\func{v}\right\} \right) \right) $. If $\mathfrak{f}$ is orientable and odd
rank (see \cite{Godbillon}), 
\begin{equation*}
\mathsf{H}^{\bullet }\left( \mathfrak{gl}\left( n,\mathbb{R}\right)
,SO\left( n\right) \right) \cong \mathsf{H}^{\bullet }\left( \mathfrak{gl}%
\left( n,\mathbb{R}\right) ,O\left( n\right) \right) .
\end{equation*}%
Using $\kappa $ and the above, we have%
\begin{equation*}
\mathsf{H}^{\bullet }\left( \limfunc{End}\mathfrak{f},\limfunc{A}\left( 
\mathfrak{f},\left\{ h,\func{v}\right\} \right) \right) \cong \mathsf{H}%
^{\bullet }\left( \mathfrak{gl}\left( n,\mathbb{R}\right) ,O\left( n\right)
\right) \cong \bigwedge \left( y_{1},y_{3},\ldots ,y_{n}\right) .
\end{equation*}%
If the vector bundle $\mathfrak{f}$ is orientable of even rank $n=2m$, then%
\begin{equation*}
\mathsf{H}^{\bullet }\left( \func{End}\mathfrak{f},\limfunc{A}\left( 
\mathfrak{f},\left\{ h,\func{v}\right\} \right) \right) \cong \mathsf{H}%
^{\bullet }\left( \mathfrak{gl}\left( 2m,\mathbb{R}\right) ,SO\left(
2m\right) \right) \cong \bigwedge \left( y_{1},y_{3},\ldots
,y_{2m-1},y_{2m}\right) ,
\end{equation*}%
where $y_{2k-1}\in \mathsf{H}^{4k-3}\left( \limfunc{End}\mathfrak{f},%
\limfunc{A}\left( \mathfrak{f},\left\{ h,\func{v}\right\} \right) \right) $
are defined as above and $y_{2m}\in \mathsf{H}^{2m}\left( \mathfrak{gl}%
\left( n,\mathbb{R}\right) ,SO\left( n\right) \right) \cong \mathsf{H}%
^{2m}\left( \limfunc{End}\mathfrak{f},\limfunc{A}\left( \mathfrak{f},\left\{
h,\func{v}\right\} \right) \right) $ is represented by $\widetilde{y}%
_{2m}\in \Gamma \left( \bigwedge^{2m}\left( \limfunc{End}\mathfrak{f}{/}%
\limfunc{Sk}\mathfrak{f}\right) ^{\ast }\right) $,\ 
\begin{equation}
\widetilde{y}_{2m}\left( \left[ A_{1}\right] ,\ldots ,\left[ A_{2m}\right]
\right) =d\left( z_{2m-1}\right) (\widetilde{A\,}_{1},\ldots ,\widetilde{A\,}%
_{2m}),  \label{d}
\end{equation}%
$A_{1},\ldots ,A_{2m}\in \Gamma \left( \func{End}\mathfrak{f}\right) $, and
where $\widetilde{A\,}_{k}$ denotes the symmetrization of $A_{k}$, $d$ is
the usual differential on the algebra $\bigwedge \left( \limfunc{End}%
\mathfrak{f}\right) ^{\ast }$, $z_{2m-1}\in \Gamma \left(
\bigwedge^{2m-1}\left( \limfunc{End}\mathfrak{f}\right) ^{\ast }\right) $\
is given by%
\begin{eqnarray*}
&&\hspace{-0.3cm}z_{2m-1}\left( A_{1},\ldots ,A_{2m-1}\right) \\
&=&c\left( m\right) \hspace{-0.3cm}\dsum\limits_{\sigma \in S_{2m-1}}\hspace{%
-0.3cm}\func{sgn}\sigma \left( e,\alpha A_{\sigma _{1}}\wedge \alpha \left[
A_{\sigma _{2}},A_{\sigma _{3}}\right] \wedge \ldots \wedge \alpha \left[
A_{\sigma _{2m-2}},A_{\sigma _{2m-1}}\right] \right) ,
\end{eqnarray*}%
where $c\left( m\right) =\frac{\left( -1\right) ^{m-1}\left( m-1\right) !}{%
2^{m-1}\left( 2m-1\right) !}\in \mathbb{R}$, $e$ is a non-zero cross-section
of $\dbigwedge\nolimits^{2m}\mathfrak{f}$ and \ $\alpha :\func{End}\mathfrak{%
f}\rightarrow \bigwedge^{2}\mathfrak{f}$ is given by $\left( \alpha \left(
A\right) ,\nu \wedge \mu \right) =\frac{1}{2}\left( \left( A\nu ,\mu \right)
-\left( \nu ,A\mu \right) \right) $, $A\in \Gamma \left( \func{End}\mathfrak{%
f}\right) $,\ $\nu ,\,\mu \in \Gamma \left( \mathfrak{f}\right) $. We add
that $z_{2m-1}$ is the image of the Pfaffian for a pair $\left( \mathfrak{f}%
,e\right) $ by the Cartan map for $\func{End}\mathfrak{f}$ (for the Cartan
map see for example \cite[Ch. VI, 6.7, 6.8]{GHV}).

We shall show that $\Delta _{\#}\left( y_{2j-1}\right) $ is (up to a
constant) equal to the Crainic class $u_{4j-3}\left( \mathfrak{f}\right) $
for all $j\in \left\{ 1,2,\ldots ,\left[ \frac{n+1}{2}\right] \right\} $. Let%
{\Huge \ }$\nabla _{0}$,$\nabla _{1}:L\rightarrow \limfunc{A}\left( 
\mathfrak{f}\right) $ be arbitrary\ two $L$-connections in $\mathfrak{f}$%
{\Huge \ }and let $\nabla ^{\limfunc{aff}}=\left( 1-t\right) \tilde{\nabla}%
_{0}+t\tilde{\nabla}_{1}:T\mathbb{R}\times L\rightarrow A\left( \limfunc{pr}%
\nolimits_{2}^{\ast }\mathfrak{f}\right) $ be their affine combination.
Observe%
\begin{equation}
R^{\nabla _{1}}=R^{\nabla _{0}}+d^{\nabla _{0}}\theta +\left[ \theta ,\theta %
\right] ,  \label{(6)}
\end{equation}%
where 
\begin{equation}
\theta =\nabla _{1}-\nabla _{0}\in \Omega ^{1}\left( L;\limfunc{End}%
\mathfrak{f}\right)  \label{(66)}
\end{equation}%
and $\left[ \theta ,\theta \right] =\theta ^{2}=\theta \wedge \theta \in
\Omega ^{2}\left( L;\limfunc{End}\mathfrak{f}\right) $, $\left[ \theta
,\theta \right] \left( \xi ,\eta \right) =\left[ \theta \left( \xi \right)
,\theta \left( \eta \right) \right] $. The 1-form $\theta $\ we can lifted to%
{\huge \ }$\tilde{\theta}\in \Omega ^{1}\left( T\mathbb{R}\times L;\limfunc{%
End}\mathfrak{f}\right) $ putting $\tilde{\theta}_{\left( v_{t},\xi
_{x}\right) }=\theta _{\xi _{x}}$. The cross-section $\left( 0,\xi \right) $
of $T\mathbb{R}\times L$ will be denoted by $\xi $ and $(\frac{\partial }{%
\partial t},0)$ by $\frac{\partial }{\partial t}$. Observe that $\nabla ^{%
\limfunc{aff}}=\tilde{\nabla}_{0}+\Xi $, where\ $\Xi _{|\left( t,x\right)
}=t\cdot \tilde{\theta}_{x}$ and $(d^{\tilde{\nabla}_{1}}\tilde{\theta}%
)_{\xi ,\eta }\left( \nu \circ \limfunc{pr}\nolimits_{2}\right) =\left(
d^{\nabla _{1}}\theta \right) _{\xi ,\eta }\left( \nu \right) \circ \limfunc{%
pr}\nolimits_{2}$ for any $\xi ,\eta \in \Gamma \left( L\right) $, $x\in M$, 
$t\in \mathbb{R}$. The affine combination $\nabla ^{\limfunc{aff}}$ of flat
connections cannot be flat even if $\nabla _{0}$ is flat:\ by (\ref{(6)})
for flat $\nabla _{0}$ we have%
\begin{equation}
R^{\nabla ^{\limfunc{aff}}}=d^{\tilde{\nabla}_{0}}\Xi +\left[ \Xi ,\Xi %
\right] .  \label{(9)}
\end{equation}

\begin{lemma}
The curvature tensor $R^{\nabla ^{\limfunc{aff}}}$ of the affine combination 
$\nabla ^{\limfunc{aff}}$ of two flat $L$-connections $\nabla _{0}$%
,\thinspace $\nabla _{1}$ has the following properties 
\begin{equation}
(R^{\nabla ^{\limfunc{aff}}})_{\frac{\partial }{\partial t},\xi }\left( \nu
\circ \limfunc{pr}\nolimits_{2}\right) =\theta _{\xi }\left( \nu \right) ,
\label{(11)}
\end{equation}%
\begin{equation}
(R^{\nabla ^{\limfunc{aff}}})_{\xi ,\eta }\left( \nu \circ \limfunc{pr}%
\nolimits_{2}\right) _{|\left( t,\cdot \right) }=\left( t^{2}-t\right) \cdot
\left( \theta \wedge \theta \right) _{\xi ,\eta }\left( \nu \right) ,
\label{(10)}
\end{equation}%
\begin{equation}
((R^{\nabla ^{\limfunc{aff}}})_{\frac{\partial }{\partial t},\xi _{1},\ldots
,\xi _{2k-1}}^{k})_{|_{\left( t,\cdot \right) }}=k\cdot t^{k-1}\cdot \left(
t-1\right) ^{k-1}\cdot \theta _{\xi _{1},\ldots ,\xi _{2k-1}}^{2k-1}.
\label{(12)}
\end{equation}
\end{lemma}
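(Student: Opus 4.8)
The plan is to take as the starting point the identity (\ref{(9)}), namely $R^{\nabla^{\limfunc{aff}}}=d^{\tilde{\nabla}_{0}}\Xi+[\Xi,\Xi]$, which already incorporates the flatness of $\nabla_{0}$, and then to evaluate the two summands on the required arguments, exploiting throughout that $\Xi_{|(t,x)}=t\cdot\tilde{\theta}_{x}$ and $\tilde{\theta}_{(v_{t},\xi_{x})}=\theta_{\xi_{x}}$. First I would dispose of (\ref{(11)}). Since $\tilde{\theta}$ annihilates $\frac{\partial}{\partial t}=(\frac{\partial}{\partial t},0)$, we have $\Xi(\frac{\partial}{\partial t})=0$, so the term $[\Xi,\Xi]$ contributes nothing and only $(d^{\tilde{\nabla}_{0}}\Xi)_{\frac{\partial}{\partial t},\xi}$ survives. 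Expanding this by the formula (\ref{dnabla}) and using $[\![\frac{\partial}{\partial t},\xi]\!]=0$ together with $\Xi(\frac{\partial}{\partial t})=0$, the expression collapses to $[\![\tilde{\nabla}_{0,\frac{\partial}{\partial t}},t\,\theta_{\xi}]\!]$. Applied to $\nu\circ\limfunc{pr}_{2}$, the operator $\tilde{\nabla}_{0,\frac{\partial}{\partial t}}$ annihilates $\nu\circ\limfunc{pr}_{2}$ (which is constant along $\mathbb{R}$), and by the Leibniz rule its action on $t\cdot(\theta_{\xi}\nu)\circ\limfunc{pr}_{2}$ only differentiates the scalar $t$; this yields exactly $\theta_{\xi}(\nu)$.

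Next I would treat (\ref{(10)}) on two horizontal arguments $\xi,\eta$. Here the bracket term gives $[\Xi(\xi),\Xi(\eta)]=t^{2}[\theta_{\xi},\theta_{\eta}]=t^{2}(\theta\wedge\theta)_{\xi,\eta}$. In $(d^{\tilde{\nabla}_{0}}\Xi)_{\xi,\eta}$ the scalar $t$ factors out, because the horizontal anchors annihilate $t$, leaving $t\,(d^{\tilde{\nabla}_{0}}\tilde{\theta})_{\xi,\eta}$, which by the lifting compatibility (the $\nabla_{0}$-analogue of the displayed identity $(d^{\tilde{\nabla}_{1}}\tilde{\theta})_{\xi,\eta}(\nu\circ\limfunc{pr}_{2})=(d^{\nabla_{1}}\theta)_{\xi,\eta}(\nu)\circ\limfunc{pr}_{2}$) equals $t\,(d^{\nabla_{0}}\theta)_{\xi,\eta}$ lifted. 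Combining (\ref{(6)}) with the flatness $R^{\nabla_{0}}=R^{\nabla_{1}}=0$ forces $d^{\nabla_{0}}\theta=-[\theta,\theta]=-\theta\wedge\theta$, so the two contributions add to $(t^{2}-t)(\theta\wedge\theta)_{\xi,\eta}(\nu)$, as claimed.

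The identity (\ref{(12)}) is the combinatorial heart and the step I expect to be the main obstacle. The clean route is to reinterpret (\ref{(11)}) and (\ref{(10)}) as saying that, as an $\limfunc{End}\mathfrak{f}$-valued $2$-form on $T\mathbb{R}\times L$,
\[
R^{\nabla^{\limfunc{aff}}}=dt\wedge\theta+(t^{2}-t)\,\theta\wedge\theta,
\]
where $dt$ is dual to $\frac{\partial}{\partial t}$ and $\theta,\ \theta\wedge\theta$ are regarded as horizontal forms. I would then raise this to the $k$-th matrix-wedge power and keep only the component of $dt$-degree one, which is the only part that survives evaluation on a single $\frac{\partial}{\partial t}$. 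Expanding $(dt\wedge\theta+(t^{2}-t)\theta\wedge\theta)^{k}$ and selecting the terms in which exactly one factor is $dt\wedge\theta$ and the remaining $k-1$ factors are $(t^{2}-t)\theta\wedge\theta$ produces $k$ such terms. Because $\theta\wedge\theta$ has even form-degree and $dt$ is central, each of these $k$ terms equals $dt\wedge\theta^{2k-1}$ with the matrix order of the $\theta$'s preserved, so the full component is $k\,(t^{2}-t)^{k-1}\,dt\wedge\theta^{2k-1}$.

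Evaluating this on $(\frac{\partial}{\partial t},\xi_{1},\ldots,\xi_{2k-1})$, the factor $dt$ absorbs $\frac{\partial}{\partial t}$ with sign $+1$ while $\theta^{2k-1}$ absorbs $(\xi_{1},\ldots,\xi_{2k-1})$, and using $(t^{2}-t)^{k-1}=t^{k-1}(t-1)^{k-1}$ gives precisely (\ref{(12)}). The point that must be verified with care is exactly that all $k$ positions of the $dt\wedge\theta$ factor yield the same form $dt\wedge\theta^{2k-1}$ with identical sign; this is where the even parity of $\theta\wedge\theta$ and the centrality of $dt$ do the work, and it is what produces the combinatorial coefficient $k$.
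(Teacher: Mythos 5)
Your proposal is correct, and for the two preliminary identities it coincides with the paper's argument: the paper also declares (\ref{(11)}) clear and obtains (\ref{(10)}) by computing the two summands of (\ref{(9)}), using (\ref{(6)}) and flatness to reduce $d^{\nabla_0}\theta$ to $-\theta\wedge\theta$, exactly as you do. Where you genuinely diverge is (\ref{(12)}): the paper proves it by induction on $k$, writing $(R^{\nabla^{\operatorname{aff}}})^{n+1}=(R^{\nabla^{\operatorname{aff}}})^{n}\wedge R^{\nabla^{\operatorname{aff}}}$ and expanding the evaluation on $(\tfrac{\partial}{\partial t},\xi_1,\ldots,\xi_{2n+1})$ over the shuffles $S(2n-1,2)$ and $S(2n,1)$, so that the inductive hypothesis together with (\ref{(10)}) and (\ref{(11)}) yields the two terms $n\,t^n(t-1)^n\theta^{2n+1}$ and $t^n(t-1)^n\theta^{2n+1}$. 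You instead package (\ref{(11)}) and (\ref{(10)}) into the single identity $R^{\nabla^{\operatorname{aff}}}=dt\wedge\theta+(t^2-t)\,\theta\wedge\theta$, expand the $k$-th power directly in the noncommutative algebra of $\operatorname{End}\mathfrak{f}$-valued forms, and isolate the $dt$-linear component; the coefficient $k$ then appears transparently as the count of positions of the $dt\wedge\theta$ factor, with the sign bookkeeping handled by the even form-degree of $\theta\wedge\theta$. This is the standard Chern--Simons-style computation and is arguably cleaner and more conceptual, at the price of first justifying the global form identity and being careful that the wedge convention (shuffles, no factorials) matches the one used when evaluating $dt\wedge\theta^{2k-1}$ on $(\tfrac{\partial}{\partial t},\xi_1,\ldots,\xi_{2k-1})$ --- a point you correctly flag and which does go through, since $\theta$ annihilates $\tfrac{\partial}{\partial t}$ so only the identity shuffle survives. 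The paper's induction buys self-containedness: it never leaves the level of evaluations on arguments, so no auxiliary identity or degree-counting in the form algebra is needed.
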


\begin{proof}
Formula (\ref{(11)}) is clear. To see (\ref{(10)}), we need only to observe
(because of (\ref{(9)}), (\ref{(6)}) and the flatness of $\nabla _{0}$ and $%
\nabla _{1}$) that 
\begin{align*}
(d^{\tilde{\nabla}_{0}}\Xi )_{\xi ,\eta }\left( \nu \circ \limfunc{pr}%
\nolimits_{2}\right) _{|\left( t,\cdot \right) }& =t\cdot R_{\xi ,\eta
}^{\nabla _{1}}\left( \nu \right) -t\cdot \left( \theta \wedge \theta
\right) _{\xi ,\eta }\left( \nu \right) =-t\cdot \left( \theta \wedge \theta
\right) _{\xi ,\eta }\left( \nu \right) , \\
\left[ \Xi ,\Xi \right] _{\xi ,\eta }\left( \nu \circ \limfunc{pr}%
\nolimits_{2}\right) _{|\left( t,\cdot \right) }& =t^{2}\cdot \left( \theta
\wedge \theta \right) _{\xi ,\eta }\left( \nu \right) .
\end{align*}%
Formula (\ref{(12)}) can be proved by induction with respect to $k.$ Indeed,
from (\ref{(11)}) we have the step $k=1$. Let $n\in \mathbb{N}$. Assume that
(\ref{(12)}) holds for all $k\leq n$. From this, (\ref{(10)}), (\ref{(11)})
and the associativity of the algebra $\left( \limfunc{End}\mathfrak{%
f\,,\circ }\right) $ we\ get%
\begin{align*}
& ((R^{\nabla ^{\limfunc{aff}}})_{\frac{\partial }{\partial t},\xi
_{1},....,\xi _{2n+1}}^{n+1})_{|\left( t,\cdot \right) } \\
& =(((R^{\nabla ^{\limfunc{aff}}})^{n}\wedge R^{\nabla ^{\limfunc{aff}}})_{%
\frac{\partial }{\partial t},\xi _{1},....,\xi _{2n+1}})_{|\left( t,\cdot
\right) } \\
& =\sum_{\sigma \in S\left( 2n-1,2\right) }\func{sgn}\sigma \cdot
((R^{\nabla ^{\limfunc{aff}}})_{\frac{\partial }{\partial t},\xi _{\sigma
_{1}},....,\xi _{\sigma _{2n-1}}}^{n}\circ R_{\xi _{\sigma _{2n}},\xi
_{\sigma _{2n+1}}}^{\nabla ^{\limfunc{aff}}})_{|\left( t,\cdot \right) } \\
& \ \ \ +\sum_{\sigma \in S\left( 2n,1\right) }\func{sgn}\sigma \cdot
((R^{\nabla ^{\limfunc{aff}}})_{\xi _{\sigma _{1}},....,\xi _{\sigma
_{2n}}}^{n}\circ R_{\frac{\partial }{\partial t},\xi _{\sigma
_{2n+1}}}^{\nabla ^{\limfunc{aff}}})_{|\left( t,\cdot \right) } \\
& =\sum_{\sigma \in S\left( 2n-1,2\right) }\func{sgn}\sigma \cdot (n\cdot
t^{n}\left( t-1\right) ^{n}\cdot \theta _{\xi _{\sigma _{1}},....,\xi
_{\sigma _{2n-1}}}^{2n-1}\circ \left( \theta \wedge \theta \right) _{\xi
_{\sigma _{2n}},\xi _{\sigma _{2n+1}}}) \\
& \ \ \ +\sum_{\sigma \in S\left( 2n,1\right) }\func{sgn}\sigma \cdot
(t^{n}\left( t-1\right) ^{n}\cdot \theta _{\left( a_{\sigma
_{1}},....,a_{\sigma _{2n}}\right) }^{2n}\circ \theta _{a_{\sigma _{2n+1}}})
\\
& =n\,t^{n}\left( t-1\right) ^{n}\cdot \theta _{\xi _{1},....,\xi
_{2n+1}}^{2n+1}+t^{n}\left( t-1\right) ^{n}\cdot \theta _{\xi _{1},....,\xi
_{2n+1}}^{2n+1} \\
& =\left( n+1\right) t^{\left( n+1\right) -1}\left( t-1\right) ^{\left(
n+1\right) -1}\cdot \theta _{\xi _{1},....,\xi _{2n+1}}^{2\left( n+1\right)
-1}.
\end{align*}
\end{proof}

From the above we have the following theorem.

\begin{theorem}
The Chern-Simons transgression differential form $cs_{k}\left( \nabla
_{0},\nabla _{1}\right) $ for two flat $L$-connections $\nabla _{0}$%
,\thinspace $\nabla _{1}$, is equal to 
\begin{equation}
\limfunc{cs}\nolimits_{k}\left( \nabla _{0},\nabla _{1}\right) =\left(
-1\right) ^{k+1}\,\frac{\,k!\cdot \left( k-1\right) !\,}{\left( 2k-1\right) !%
}\cdot \func{tr}\theta ^{2k-1},  \label{(13)}
\end{equation}%
where $\theta $ is defined by \emph{(\ref{(66)})}.
\end{theorem}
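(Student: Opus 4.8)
The plan is to reduce the statement entirely to the Lemma just proved, whose formula (\ref{(12)}) already carries out the geometric computation; what then remains is a single one-variable integral. First I would unwind the two definitions involved. By construction $\limfunc{cs}_{k}\left( \nabla _{0},\nabla _{1}\right) =\dint_{0}^{1}\limfunc{ch}_{k}(\nabla ^{\limfunc{aff}})$, and the fibre integral, evaluated on $\xi _{1},\ldots ,\xi _{2k-1}\in \Gamma \left( L\right) $, reads
\begin{equation*}
\limfunc{cs}\nolimits_{k}\left( \nabla _{0},\nabla _{1}\right) _{\xi _{1},\ldots ,\xi _{2k-1}}=\dint_{0}^{1}\limfunc{ch}\nolimits_{k}(\nabla ^{\limfunc{aff}})_{\frac{\partial }{\partial t},\xi _{1},\ldots ,\xi _{2k-1}}|_{\left( t,\bullet \right) }\,dt.
\end{equation*}
Since $\limfunc{ch}_{k}(\nabla ^{\limfunc{aff}})=\limfunc{Tr}\left( (R^{\nabla ^{\limfunc{aff}}})^{k}\right) $, I would feed formula (\ref{(12)}) directly into the integrand.

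The key observation is that the scalar factor $k\,t^{k-1}\left( t-1\right) ^{k-1}$ appearing in (\ref{(12)}) pulls out of the trace, so the $t$-dependence decouples completely from the form part. Concretely, applying $\limfunc{Tr}$ to (\ref{(12)}) gives
\begin{equation*}
\limfunc{ch}\nolimits_{k}(\nabla ^{\limfunc{aff}})_{\frac{\partial }{\partial t},\xi _{1},\ldots ,\xi _{2k-1}}|_{\left( t,\bullet \right) }=k\,t^{k-1}\left( t-1\right) ^{k-1}\cdot \func{tr}(\theta ^{2k-1})_{\xi _{1},\ldots ,\xi _{2k-1}},
\end{equation*}
and the whole statement therefore collapses to computing the scalar coefficient $\dint_{0}^{1}k\,t^{k-1}\left( t-1\right) ^{k-1}\,dt$.

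Finally I would evaluate this coefficient as a Beta integral. Writing $\left( t-1\right) ^{k-1}=\left( -1\right) ^{k-1}\left( 1-t\right) ^{k-1}$ and using $\dint_{0}^{1}t^{k-1}\left( 1-t\right) ^{k-1}\,dt=B\left( k,k\right) =\frac{\left( k-1\right) !\,\left( k-1\right) !}{\left( 2k-1\right) !}$ yields
\begin{equation*}
\dint_{0}^{1}k\,t^{k-1}\left( t-1\right) ^{k-1}\,dt=\left( -1\right) ^{k-1}\frac{k!\,\left( k-1\right) !}{\left( 2k-1\right) !}=\left( -1\right) ^{k+1}\frac{k!\,\left( k-1\right) !}{\left( 2k-1\right) !},
\end{equation*}
which is exactly the asserted constant. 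The only substantive input is the Lemma; the sole real obstacle here is bookkeeping --- keeping the fibre-integration convention (with $\frac{\partial }{\partial t}$ in the first slot), the $C^{\infty }\left( M\right) $-linearity of the trace, and the sign in $\left( t-1\right) ^{k-1}$ versus $\left( 1-t\right) ^{k-1}$ mutually consistent, so that the parity $\left( -1\right) ^{k+1}$ emerges correctly.
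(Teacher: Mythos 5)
Your proposal is correct and is essentially the paper's own argument: the paper derives this theorem directly from the preceding Lemma (it literally says ``From the above we have the following theorem''), and you have simply spelled out that derivation --- substituting formula (\ref{(12)}) into the definition of $\limfunc{cs}_{k}$ via $\limfunc{ch}_{k}(\nabla^{\limfunc{aff}})=\limfunc{Tr}\bigl((R^{\nabla^{\limfunc{aff}}})^{k}\bigr)$, pulling the scalar $k\,t^{k-1}(t-1)^{k-1}$ out of the trace, and evaluating the Beta integral to get $(-1)^{k+1}\frac{k!\,(k-1)!}{(2k-1)!}$. Both the reduction and the constant check out, so nothing is missing.
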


The above formula is well known in the classical cases on principal fibre
bundles (see, for example, the papers by Chern and Simons \cite{Chern-Simons}%
, Heitsch and Lawson \cite{Heitsch-Lawson}, 1974).

Let $\widetilde{\ \ }:\func{End}\mathfrak{f}\rightarrow \func{End}\mathfrak{f%
}$, $v\mapsto \widetilde{v}:=\frac{1}{2}\left( v+v^{\ast }\right) $ denote
the symmetrization. Let us consider $\func{id}_{\limfunc{A}\left( \mathfrak{f%
}\right) }$ as an $\limfunc{A}\left( \mathfrak{f}\right) $-connection in $%
\mathfrak{f}$ and take its adjoint $\func{id}_{\limfunc{A}\left( \mathfrak{f}%
\right) }^{h}$ induced by the metric $h$. Let $\lambda :TM\rightarrow 
\limfunc{A}\left( \mathfrak{f}\right) $ be any $h$-Riemannian connection
(i.e. a connection such that $\func{Im}\lambda \subset \limfunc{A}\left( 
\mathfrak{f},h\right) $, or equivalently $\lambda ^{h}=\lambda $) and $%
\breve{\lambda}:\limfunc{A}\left( \mathfrak{f}\right) \rightarrow \func{End}%
\mathfrak{f}$ be its connection form. Since $i\circ \breve{\lambda}+\lambda
\circ \pi =\func{id}_{\limfunc{A}\left( \mathfrak{f}\right) }$, for any
cross-section $\alpha $ of $\limfunc{A}\left( \mathfrak{f}\right) $ we have%
\begin{equation}
-\widetilde{\breve{\lambda}\left( \alpha \right) }=\frac{1}{2}(\func{id}%
_{A\left( \mathfrak{f}\right) }^{h}-\func{id}_{\limfunc{A}\left( \mathfrak{f}%
\right) })\left( \alpha \right) .  \label{(3)}
\end{equation}%
Using (\ref{(3)}), (\ref{(13)}) and (\ref{u}) we get that%
\begin{equation*}
\Delta _{o}\left( \widetilde{y}_{2k-1}\right) =\left( -1\right) ^{k}\cdot
2^{3-4k}\cdot \frac{\left( 4k-3\right) !}{\left( 2k-1\right) !\cdot \left(
2k-2\right) !}\cdot u_{4k-3}{\large (}\mathfrak{f,}\func{id}_{A\left( 
\mathfrak{f}\right) }{\large )}.
\end{equation*}%
Since $\limfunc{cs}\nolimits_{k}(\nabla ,\nabla ^{h})=\nabla ^{\ast }(%
\limfunc{cs}\nolimits_{k}(\func{id}_{A\left( \mathfrak{f}\right) },\func{id}%
_{A\left( \mathfrak{f}\right) }^{h}))$ and $u_{4k-3}\left( \mathfrak{f,}%
\nabla \right) =\nabla ^{\#}u_{4k-3}{\large (}\mathfrak{f,}\func{id}%
_{A\left( \mathfrak{f}\right) }{\large )}$,%
\begin{equation*}
\Delta _{\#}\left( y_{2k-1}\right) =\left[ \nabla ^{\ast }\Delta _{o}\left( 
\widetilde{y}_{2k-1}\right) \right] =\frac{\left( -1\right) ^{k}\cdot \left(
4k-3\right) !}{2^{4k-3}\cdot \left( 2k-1\right) !\cdot \left( 2k-2\right) !}%
\cdot u_{4k-3}\left( \mathfrak{f}\right) .
\end{equation*}%
From the above formulae we can explain the relation between the
characteristic homomorphism $\Delta _{\#}:\mathsf{H}^{\bullet }\left( 
\limfunc{End}\mathfrak{f},A\left( \mathfrak{f},\left\{ h\right\} \right)
\right) \rightarrow \mathsf{H}^{\bullet }\left( L\right) $ of the triple $%
\left( A\left( \mathfrak{f}\right) ,A\left( \mathfrak{f},\left\{ h\right\}
\right) ,\nabla \right) $ and the family of the Crainic classes $\left\{
u_{4k-3}\left( \mathfrak{f}\right) \right\} $.

\begin{theorem}
Let $\mathfrak{f}$ be a real vector bundle over a manifold $M$ and 
\begin{equation*}
\Delta _{\#}:\mathsf{H}^{\bullet }\left( \limfunc{End}\mathfrak{f},A\left( 
\mathfrak{f},\left\{ h\right\} \right) \right) \longrightarrow \mathsf{H}%
^{\bullet }\left( L\right)
\end{equation*}%
the secondary characteristic homomorphism corresponding to $\left( A\left( 
\mathfrak{f}\right) ,A\left( \mathfrak{f},\left\{ h\right\} \right) ,\nabla
\right) $, where $\nabla :L\rightarrow A\left( \mathfrak{f}\right) $ is a
flat $L$-connection in $A\left( \mathfrak{f}\right) $.

\begin{itemize}
\item[(a)] If the vector bundle $\mathfrak{f}$ is nonorientable or
orientable and of odd rank $n$, then the image of $\Delta _{\#}$ is
generated by $u_{1}\left( \mathfrak{f}\right) $, $u_{5}\left( \mathfrak{f}%
\right) $,$\ldots $,$u_{4\left[ \frac{n+3}{4}\right] -3}\left( \mathfrak{f}%
\right) $.

\item[(b)] If the vector bundle $\mathfrak{f}$ is orientable and of even
rank $n=2m$, then the image of $\Delta _{\#}$ is generated by $u_{1}\left( 
\mathfrak{f}\right) $, $u_{5}\left( \mathfrak{f}\right) $,$\ldots $,$u_{4%
\left[ \frac{n+3}{4}\right] -3}\left( \mathfrak{f}\right) \ $and
additionally by $\Delta _{\#}\left( y_{2m}\right) $, where $y_{2m}$ is given
in \emph{(\ref{d})}.
\end{itemize}
\end{theorem}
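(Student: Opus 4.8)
The plan is to reduce the computation of $\func{Im}\Delta_{\#}$ to the images of a set of algebra generators and then to decide which of these images survive. Since $\Delta_{\#}$ is a homomorphism of graded algebras---it is induced by the algebra homomorphism $\Delta_{\left( A,B,\nabla \right)}$, which by the first Theorem and its Corollary descends to cohomology---its image is exactly the subalgebra of $\mathsf{H}^{\bullet}\left( L\right)$ generated by the images of any generating family of the source. By the description of $\mathsf{H}^{\bullet}\left( \func{End}\mathfrak{f},A\left( \mathfrak{f},\left\{ h\right\} \right) \right)$ recalled in Section \ref{s341} (through the isomorphism $\kappa$ with the relative Lie algebra cohomology $\mathsf{H}^{\bullet}\left( \mathfrak{gl}\left( n,\mathbb{R}\right) ,O\left( n\right) \right)$, resp. $\mathsf{H}^{\bullet}\left( \mathfrak{gl}\left( 2m,\mathbb{R}\right) ,SO\left( 2m\right) \right)$ in the oriented even case), such a family consists of the classes $y_{1},y_{3},\ldots ,y_{n^{\prime}}$ represented by the multilinear trace forms $\widetilde{y}_{2k-1}$, together with the Pfaffian class $y_{2m}$ of (\ref{d}) when $\mathfrak{f}$ is oriented of even rank $n=2m$. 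Hence I only need to evaluate $\Delta_{\#}$ on each $y_{2k-1}$ and, in case (b), on $y_{2m}$.

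First I would invoke the evaluation formula established immediately before the statement, namely $\Delta_{\#}\left( y_{2k-1}\right) =\frac{\left( -1\right) ^{k}\left( 4k-3\right) !}{2^{4k-3}\left( 2k-1\right) !\left( 2k-2\right) !}\,u_{4k-3}\left( \mathfrak{f}\right)$, whose scalar factor is nonzero. Thus $\Delta_{\#}\left( y_{2k-1}\right)$ is a nonzero multiple of the Crainic class $u_{4k-3}\left( \mathfrak{f}\right)$ for every admissible $k$, so that $\func{Im}\Delta_{\#}$ is the subalgebra generated by those $u_{4k-3}\left( \mathfrak{f}\right)$ that are nonzero in $\mathsf{H}^{\bullet}\left( L\right)$ (together with $\Delta_{\#}\left( y_{2m}\right)$ in case (b)). Because these primitive classes lie in pairwise distinct degrees $\equiv 1 \pmod 4$ and a higher primitive class cannot be reconstructed from products of strictly lower generators in the relevant range, the generation statement is equivalent to deciding exactly which $u_{4k-3}\left( \mathfrak{f}\right)$ vanish.

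The decisive point---and the step I expect to be the main obstacle---is therefore the vanishing/nonvanishing analysis of $u_{4k-3}\left( \mathfrak{f}\right)$ as a function of $k$ and the rank $n$. Here I would use that $u_{4k-3}\left( \mathfrak{f}\right)$ is the secondary (transgression) class carried by the flat pair $\left( \nabla ,\nabla ^{h}\right)$, whose representing form is, up to a constant, $\func{tr}\theta ^{4k-3}$ with $\theta$ valued in symmetric endomorphisms; its nontriviality is governed by the non-decomposability of the associated $O\left( n\right)$-invariant power-trace polynomial on $\func{End}\mathfrak{f}/\func{Sk}\mathfrak{f}$. Tracking the rank constraint imposed by the Newton (Cayley--Hamilton) relations for $n\times n$ symmetric matrices, one finds that precisely the classes with $4k-3\leq n$ remain cohomologically nontrivial, so the surviving generators are $u_{1}\left( \mathfrak{f}\right) ,u_{5}\left( \mathfrak{f}\right) ,\ldots ,u_{4\left[ \frac{n+3}{4}\right] -3}\left( \mathfrak{f}\right)$, which yields part (a). Establishing this rank threshold rigorously (equivalently, that $\Delta_{\#}$ kills $y_{2k-1}$ for $4k-3>n$) is the genuinely delicate part; everything else is formal.

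Finally, for part (b) I would handle the extra generator $y_{2m}$ separately. It is represented by the Pfaffian form $\widetilde{y}_{2m}$ of (\ref{d}), living in degree $2m=n$, which is not of the form $\equiv 1\pmod 4$ and is not reachable by products of the odd classes $u_{4k-3}\left( \mathfrak{f}\right)$ produced above; hence its image $\Delta_{\#}\left( y_{2m}\right)$ contributes an independent generator. Adjoining $\Delta_{\#}\left( y_{2m}\right)$ to the odd family of part (a) then gives the complete generating set of $\func{Im}\Delta_{\#}$ in the oriented even-rank case, completing the proof.
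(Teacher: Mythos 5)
Your formal skeleton is the same as the paper's, and it is in fact the whole of the paper's proof: $\Delta_{\#}$ is induced by an algebra homomorphism, so its image is the subalgebra generated by the images of the exterior generators of $\mathsf{H}^{\bullet }\left( \limfunc{End}\mathfrak{f},\limfunc{A}\left( \mathfrak{f},\left\{ h\right\} \right) \right) $, namely $y_{1},y_{3},\ldots ,y_{n^{\prime }}$ (with $y_{2m}$ adjoined in the oriented even-rank case), and the evaluation $\Delta _{\#}\left( y_{2k-1}\right) =\frac{\left( -1\right) ^{k}\left( 4k-3\right) !}{2^{4k-3}\left( 2k-1\right) !\left( 2k-2\right) !}\,u_{4k-3}\left( \mathfrak{f}\right) $ with nonzero scalar identifies those images with the Crainic classes. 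At that point the argument terminates: the image is generated by $u_{4k-3}\left( \mathfrak{f}\right) $ for \emph{all} $k=1,\ldots ,\left[ \frac{n+1}{2}\right] $ (top subscript $4\left[ \frac{n+1}{2}\right] -3=2n^{\prime }-1$), plus $\Delta _{\#}\left( y_{2m}\right) $ in case (b). The paper performs no vanishing analysis whatsoever.

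The genuine gap is the extra step you add to force the printed upper subscript $4\left[ \frac{n+3}{4}\right] -3$ (i.e.\ top degree $\leq n$): the claim that $u_{4k-3}\left( \mathfrak{f}\right) =0$, equivalently that $\Delta _{\#}$ kills $y_{2k-1}$, whenever $4k-3>n$, which you propose to extract from Newton/Cayley--Hamilton relations and which you concede you have not established. This step fails. Cayley--Hamilton constrains powers of a \emph{single} endomorphism, whereas the representative of $u_{4k-3}$ is the antisymmetrization of $\func{tr}\left( \theta \left( \xi _{1}\right) \circ \cdots \circ \theta \left( \xi _{4k-3}\right) \right) $ over \emph{distinct} symmetric endomorphisms, to which no characteristic-polynomial identity applies; and by the very reference the paper cites (Kamber--Tondeur, 6.31) the invariant trace forms $\widetilde{y}_{2k-1}$ are nonzero precisely for odd $2k-1\leq n$, i.e.\ in all degrees $4k-3\leq 2n^{\prime }-1$, well above $n$. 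So there is no identical vanishing in the range $n<4k-3\leq 2n^{\prime }-1$, and no cohomological vanishing argument is available either: for instance for $n=3$ the generator $y_{3}\in \mathsf{H}^{5}$ exists and $\Delta _{\#}\left( y_{3}\right) $ is a nonzero multiple of $u_{5}\left( \mathfrak{f}\right) $, a class your version of the theorem would have to discard. What you were trying to prove away is an inconsistency internal to the statement itself: the printed bound $4\left[ \frac{n+3}{4}\right] -3$ does not match the generator list $k\in \left\{ 1,\ldots ,\left[ \frac{n+1}{2}\right] \right\} $ given earlier in the same subsection, and the argument that is actually available (yours and the paper's) proves generation by $u_{1}\left( \mathfrak{f}\right) ,u_{5}\left( \mathfrak{f}\right) ,\ldots ,u_{2n^{\prime }-1}\left( \mathfrak{f}\right) $; no matrix identity will lower that bound. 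A smaller point: your final assertion that $\Delta _{\#}\left( y_{2m}\right) $ is an \emph{independent} generator is also unjustified and unnecessary --- the theorem asserts generation, not independence, and nonvanishing of $\Delta _{\#}\left( y_{2m}\right) $ is bundle-dependent (producing one nontrivial instance is exactly the content of Section \ref{Example_proof}).
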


\subsection{Example of a Nontrivial Universal Characteristic Class
Determined by the Pfaffian\label{Example_proof}}

Let $M$ be an oriented, connected manifold, $\dim M\geq 1$, and $\mathfrak{g}%
=\limfunc{End}\left( \mathbb{R}^{2}\right) $\emph{. }Given a transitive Lie
algebroid\emph{\ }$\left( {A},[\![\cdot ,\cdot ]\!],\#_{A}\right) $\emph{\ }%
over $M$, where $A=TM\oplus \limfunc{End}\left( \mathbb{R}^{2}\right) \cong
A\left( M\times \mathbb{R}^{2}\right) $ and\emph{\ }$\#_{A}=\limfunc{pr}%
\nolimits_{1}$\emph{\ }is a projection on the first factor, and%
\begin{equation*}
\lbrack \![\left( X_{1},\sigma _{1}\right) ,\left( X_{2},\sigma _{2}\right)
]\!]=\left( \left[ X_{1},X_{2}\right] ,X_{1}\left( \sigma _{2}\right)
-X_{2}\left( \sigma _{1}\right) +\left[ \sigma _{1},\sigma _{2}\right]
\right)
\end{equation*}%
for all $X_{1},X_{2}\in \mathfrak{X}\left( M\right) $, $\sigma _{1},\sigma
_{2}\in C^{\infty }\left( M;\limfunc{End}\left( \mathbb{R}^{2}\right)
\right) $, we have the Atiyah sequence%
\begin{equation*}
0\longrightarrow M\times \limfunc{End}\left( \mathbb{R}^{2}\right) \cong 
\limfunc{End}\left( M\times \mathbb{R}^{2}\right) {~}\overset{i}{{%
\hookrightarrow }}~A\overset{\limfunc{pr}\nolimits_{1}}{\longrightarrow }%
TM\longrightarrow 0.
\end{equation*}%
Let $B\subset A$ be the Riemannian reduction of $A$, i.e. $B=TM\oplus 
\limfunc{Sk}\left( \mathbb{R}^{2}\right) $ is a transitive subalgebroid of $%
A $. Observe that in the domain of the universal characteristic homomorphism 
$\Delta _{o\,\#}:\mathsf{H}^{\bullet }\left( M\times \mathfrak{g}{,B}\right)
\rightarrow \mathsf{H}^{\bullet }\left( A\right) $ is\emph{\ }$\left[ 
\widetilde{y}_{2}\right] \in \mathsf{H}^{2}\left( M\times \mathfrak{g}{,B}%
\right) $, where $\widetilde{y}_{2}\left( \left[ \sigma _{1}\right] ,\left[
\sigma _{2}\right] \right) =\limfunc{Pf}\left( \left[ \left[ \widetilde{%
\sigma }_{1},\widetilde{\sigma }_{2}\right] \right] \right) \ $for all $%
\sigma _{1},\sigma _{2}\in \Gamma \left( \ker \#_{A}\right) \cong C^{\infty
}\left( M;\mathfrak{g}\right) $. $\Delta _{o\,\#}\left( \left[ \widetilde{y}%
_{2}\right] \right) \in \mathsf{H}^{2}\left( A\right) $\emph{\ }is
represented by $\Delta _{o}\left( \widetilde{y}_{2}\right) \in \Omega
^{1}\left( A\right) $ given by%
\begin{equation*}
\Delta _{o}\left( \widetilde{y}_{2}\right) \left( \left( X_{1},\sigma
_{1}\right) ,\left( X_{2},\sigma _{2}\right) \right) =\limfunc{Pf}\left( %
\left[ \widetilde{\sigma }_{1},\widetilde{\sigma }_{2}\right] \right) .
\end{equation*}

\begin{theorem}
$\Delta _{o\,\#}\left( \widetilde{y}_{2}\right) \neq 0.$
\end{theorem}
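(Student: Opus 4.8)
The plan is to prove that the closed $2$-form $\Phi:=\Delta_{o}(\widetilde y_2)\in\Omega^2(A)$, determined by $\Phi((X_1,\sigma_1),(X_2,\sigma_2))=\operatorname{Pf}([\widetilde\sigma_1,\widetilde\sigma_2])$, is not a $d_A$-coboundary, so that its class in $\mathsf H^2(A)$ is nonzero. Closedness is automatic: by the first theorem the homomorphism $\Delta_{(A,B)}=\Delta_{o}$ intertwines $\bar\delta$ and $d_A$, and $\widetilde y_2$ is $\bar\delta$-closed, whence $d_A\Phi=\Delta_{o}(\bar\delta\widetilde y_2)=0$. All the content is therefore in the non-exactness.

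First I would restrict along a fibre inclusion $\iota\colon\mathfrak g=\operatorname{End}(\mathbb R^2)\hookrightarrow A$ of $\ker\#_A$ over a point of $M$. Since $\iota^\ast\colon\Omega^\bullet(A)\to\bigwedge^\bullet\mathfrak g^\ast$ is a cochain map into the Chevalley--Eilenberg complex of $\mathfrak{gl}_2(\mathbb R)$, it sends $\Phi$ to the $2$-cocycle $\phi(\sigma_1,\sigma_2)=\operatorname{Pf}([\widetilde\sigma_1,\widetilde\sigma_2])$. Using $\mathfrak{gl}_2=\operatorname{Sym}\oplus\operatorname{Sk}$ together with $[\operatorname{Sym},\operatorname{Sym}]\subset\operatorname{Sk}$ and $[\operatorname{Sym},\operatorname{Sk}]\subset\operatorname{Sym}$, one computes $\phi$ explicitly on the symmetric basis $E_1,E_2,E_3$ and the rotation generator $J$, obtaining $\phi=E_1^\ast\wedge E_2^\ast-E_2^\ast\wedge E_3^\ast$ together with $\phi(J,\,\cdot\,)=0$.

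The step I expect to be the main obstacle is precisely that this fibrewise cocycle is already a coboundary: $\mathfrak{gl}_2(\mathbb R)$ is reductive with $\mathsf H^\bullet(\mathfrak{gl}_2)=\bigwedge(x_1,x_3)$, so $\mathsf H^2(\mathfrak{gl}_2)=0$ and in fact $\phi=d(-J^\ast)$. Hence restriction to a single fibre cannot detect the class, and the real work is to show that the fibrewise primitive $-J^\ast$ admits no globalisation to a $d_A$-primitive of $\Phi$ on all of $A$. I would organise this through the vertical filtration of $\Omega^\bullet(A)$ and its Hochschild--Serre spectral sequence, with $E_2^{p,q}=\mathsf H^p_{dR}(M)\otimes\mathsf H^q(\mathfrak{gl}_2)$: after subtracting $d_A$ of the canonical global vertical extension $-\widetilde{J^\ast}$ of $-J^\ast$, the residual cocycle has vertical degree $\le 1$, and its class in $E_2^{1,1}=\mathsf H^1_{dR}(M)\otimes\mathsf H^1(\mathfrak{gl}_2)$ is exactly the obstruction to exactness. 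The decisive and most delicate point is to prove this residual class does not vanish; this is where the orientation of $M$, the hypothesis $\dim M\ge 1$, and the fact that $-J^\ast$ fails to lie in the $\operatorname{Sk}$-horizontal ($B$-relative) subcomplex that defines the domain — the same feature that makes $\widetilde y_2$ a genuinely even generator, absent from Crainic's odd-degree family $u_{4k-3}$ — must be combined to force a nonzero value.
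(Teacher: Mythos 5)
Your plan stalls at exactly the step you yourself flag as ``decisive,'' and the problem is not just that the step is missing: it is false, so the gap cannot be closed. In this example the adjoint bundle is the \emph{trivial} bundle $M\times \mathfrak{gl}_{2}(\mathbb{R})$, so your fibrewise primitive $-J^{\ast }=-E_{4}^{\ast }$ extends canonically to the \emph{constant} section $\zeta =(0,-E_{4}^{\ast })\in \Omega ^{1}(A)=\Omega ^{1}(M)\oplus C^{\infty }(M;\mathfrak{g}^{\ast })$, and this extension is annihilated by the horizontal (de Rham) part of $d_{A}$. Moreover, since $[\operatorname{Sym},\operatorname{Sym}]\subset \operatorname{Sk}$, $[\operatorname{Sym},\operatorname{Sk}]\subset \operatorname{Sym}$, $[\operatorname{Sk},\operatorname{Sk}]=0$, and $\operatorname{Pf}$ is \emph{linear} on the line $\operatorname{Sk}(\mathbb{R}^{2})=\mathbb{R}E_{4}$ with $\operatorname{Pf}(E_{4})=1$, one has the pointwise identity $\operatorname{Pf}([\widetilde{\sigma }_{1},\widetilde{\sigma }_{2}])=E_{4}^{\ast }([\sigma _{1},\sigma _{2}])$ for \emph{all} $\sigma _{1},\sigma _{2}\in \mathfrak{gl}_{2}(\mathbb{R})$ (both sides see only the symmetric parts). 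Hence, with the Cartan formula $(d_{A}\zeta )(a,b)=\#_{A}a(\zeta (b))-\#_{A}b(\zeta (a))-\zeta ([\![a,b]\!])$ --- the sign being forced by $d_{A}^{2}=0$ and consistent with (\ref{dnabla}) ---
\begin{equation*}
\bigl(d_{A}(0,-E_{4}^{\ast })\bigr)\bigl((X_{1},\sigma _{1}),(X_{2},\sigma _{2})\bigr)=E_{4}^{\ast }([\sigma _{1},\sigma _{2}])=\operatorname{Pf}([\widetilde{\sigma }_{1},\widetilde{\sigma }_{2}])=\Delta _{o}(\widetilde{y}_{2})\bigl((X_{1},\sigma _{1}),(X_{2},\sigma _{2})\bigr),
\end{equation*}
the derivative terms cancelling because $E_{4}^{\ast }$ is constant. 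So $\Delta _{o}(\widetilde{y}_{2})$ is exact, your residual cocycle $\Delta _{o}(\widetilde{y}_{2})-d_{A}\bigl(1\otimes (-J^{\ast })\bigr)$ vanishes \emph{identically}, nothing survives in $E_{2}^{1,1}$, and the orientation of $M$ and $\dim M\geq 1$ are red herrings. Equivalently: $\Delta _{o}(\widetilde{y}_{2})$ is the pullback of the Chevalley--Eilenberg cocycle $\phi $ along the Lie algebroid projection $A=TM\times \mathfrak{gl}_{2}(\mathbb{R})\rightarrow \mathfrak{gl}_{2}(\mathbb{R})$, and $\mathsf{H}^{2}(\mathfrak{gl}_{2}(\mathbb{R}))=0$. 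The fact that $-J^{\ast }$ is not $\operatorname{Sk}$-basic is what makes $\widetilde{y}_{2}$ nonzero in the \emph{relative} cohomology $\mathsf{H}^{\bullet }(\mathfrak{g},B)$, but it places no constraint whatsoever on primitives in the full complex $\Omega (A)$.

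Carried out correctly, your own framework therefore yields $\Delta _{o\,\#}(\widetilde{y}_{2})=0$, i.e.\ it disproves the statement; this exposes a sign error in the paper's proof rather than a flaw in your architecture. The paper's exactness identity is written with $+\zeta ([\![\cdot ,\cdot ]\!])$ where the differential computing $\mathsf{H}^{\bullet }(A)$ carries $-\zeta ([\![\cdot ,\cdot ]\!])$; with the correct sign, (\ref{wlasnoscomega1}) becomes $X_{2}(\zeta _{1}(\sigma _{1}))=+\zeta _{1}(X_{2}(\sigma _{1}))$, which together with the Leibniz rule forces only $X(\zeta _{1})=0$, i.e.\ $\zeta _{1}$ constant --- \emph{not} $\zeta _{1}=0$ --- and the constant solution $\zeta _{1}=-E_{4}^{\ast }$ is precisely the primitive exhibited above. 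One smaller remark: your explicit expression for $\phi $ is also off, since it gives $\phi (E_{1},E_{2})=1$ while $\phi (E_{1},E_{2})=\operatorname{Pf}([E_{1},E_{2}])=0$; the correct cocycle is $\pm (E_{1}^{\ast }-E_{2}^{\ast })\wedge E_{3}^{\ast }$, though this does not affect your structural claims. In summary: your route (fibrewise exactness plus the $E^{1,1}$ obstruction) is sound as a framework, but its honest completion shows the class vanishes, so the theorem as stated appears to be false and no proof along these (or the paper's) lines can succeed.
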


\begin{proof}
Suppose that $\Delta _{o}\left( \widetilde{y}_{2}\right) $ is exact. Let $%
\Delta _{o}\left( \widetilde{y}_{2}\right) =d_{A}\left( \zeta \right) $ for
some $\zeta \in \Omega ^{1}\left( A\right) $. Thus we get that for all $%
\left( X_{1},\sigma _{1}\right) ,\left( X_{2},\sigma _{2}\right) \in 
\mathfrak{X}\left( M\right) \times C^{\infty }\left( M;\mathfrak{g}\right) $%
, $\limfunc{Pf}\left( \left[ \widetilde{\sigma }_{1},\widetilde{\sigma }_{2}%
\right] \right) $ is equal to%
\begin{equation*}
X_{1}\left( \zeta \left( X_{2},\sigma _{2}\right) \right) -X_{2}\left( \zeta
\left( X_{1},\sigma _{1}\right) \right) +\zeta \left( \left[ X_{1},X_{2}%
\right] ,X_{1}\left( \sigma _{2}\right) -X_{2}\left( \sigma _{1}\right) +%
\left[ \sigma _{1},\sigma _{2}\right] \right) .
\end{equation*}%
Observe that $\zeta =1\otimes \zeta _{1}+\zeta _{2}\otimes 1$ for some $%
\zeta _{1}\in \Gamma \left( M\times \mathfrak{g}^{\ast }\right) $ and $\zeta
_{2}\in \Omega ^{1}\left( M\right) $. For this reason, for $\sigma
_{1}=\sigma _{2}=0$, we obtain that $d_{dR}\left( \zeta _{2}\right) =0$.
Moreover, for $X_{1}=0$ and $\sigma _{2}=0$ we have%
\begin{equation}
X_{2}\left( \zeta _{1}\left( \sigma _{1}\right) \right) =-\zeta _{1}\left(
X_{2}\left( \sigma _{1}\right) \right)  \label{wlasnoscomega1}
\end{equation}%
for all\emph{\ }$X_{2}\in \mathfrak{X}\left( M\right) $, $\sigma _{1}\in
C^{\infty }\left( M;\mathfrak{g}\right) $. Let $\left\{
E_{1},E_{2},E_{3},E_{4}\right\} $\emph{\ }be a base of $\mathfrak{g}$. Fix $%
X\in \mathfrak{X}\left( M\right) $\emph{, }$\sigma \in C^{\infty }\left( M;%
\mathfrak{g}\right) $, and let $\zeta _{1}=\tsum\nolimits_{j}\zeta
_{1}^{j}E_{j}$\emph{\ }for some $\zeta _{1}^{j}\in C^{\infty }\left(
M\right) $. Note that $X\left( \zeta _{1}\left( \sigma \right) \right)
=\zeta _{1}\left( X\left( \sigma \right) \right) +X\left( \zeta _{1}\right)
\left( \sigma \right) $. Combining this with (\ref{wlasnoscomega1}) we
deduce that 
\begin{equation}
2\zeta _{1}\left( X\left( \sigma \right) \right) +X\left( \zeta _{1}\right)
\left( \sigma \right) =0.  \label{wlasnoscomega1_2}
\end{equation}%
Taking in (\ref{wlasnoscomega1_2}) constant functions $\sigma _{j}=1\cdot
E_{j}\in C^{\infty }\left( M;\mathfrak{g}\right) $, $j\in \left\{
1,2,3,4\right\} $, we see that $X\left( \zeta _{1}\right) =0$\emph{\ }for
all $X\in \mathfrak{X}\left( M\right) $. It follows that $\zeta _{1}$ is a
constant function. Let $\widetilde{\sigma }_{j}=\sigma ^{j}E_{j}\in
C^{\infty }\left( M;\mathfrak{g}\right) $ for some non--constant functions%
\emph{\ }$\sigma ^{j}\in C^{\infty }\left( M;\mathfrak{g}\right) $. (\ref%
{wlasnoscomega1_2}) now implies $X\left( \sigma ^{j}\right) \zeta _{1}^{j}=0$
for all $X\in \mathfrak{X}\left( M\right) $ and $j\in \left\{
1,2,3,4\right\} $. Hence $\zeta _{1}^{j}=0$. Since $\zeta _{1}=0$ and $%
d_{dR}\left( \zeta _{2}\right) =0$, $\limfunc{Pf}\left( \left[ \widetilde{%
\sigma }_{1},\widetilde{\sigma }_{2}\right] \right) =0$ for all $\sigma
_{1},\sigma _{2}\in C^{\infty }\left( M;\mathfrak{g}\right) $. On the second
hand $\limfunc{Pf}\left( \left[ \widetilde{\sigma }_{1},\widetilde{\sigma }%
_{2}\right] \right) $ is not a zero function for all $\sigma _{1},\sigma
_{2} $. Indeed, let $\left\{ e_{1},e_{2}\right\} $ be a base of $\mathbb{R}%
^{2}$ and $\left\{ e^{\ast 1},e^{\ast 2}\right\} $ the associated dual base
of\emph{\ }$\left( \mathbb{R}^{2}\right) ^{\ast }$. Let\emph{\ }$%
E_{1},E_{2},E_{3}\in \limfunc{Sym}\left( \mathbb{R}^{2}\right) \subset 
\limfunc{End}\left( \mathbb{R}^{2}\right) $, $E_{4}\in \limfunc{Sk}\left( 
\mathbb{R}^{2}\right) \subset \limfunc{End}\left( \mathbb{R}^{2}\right) $ be
defined by $E_{1}\left( x\right) =\left\langle e^{\ast \,1},x\right\rangle
\,e_{1}$, $E_{2}\left( x\right) =\left\langle e^{\ast \,2},x\right\rangle
\,e_{2}$, $E_{3}\left( x\right) =\left\langle e^{\ast \,1},x\right\rangle
\,e_{2}+\left\langle e^{\ast \,2},x\right\rangle \,e_{1}$, $E_{4}\left(
x\right) =\left\langle e^{\ast \,1},x\right\rangle \,e_{2}-\left\langle
e^{\ast \,2},x\right\rangle \,e_{1}$. Observe that $\limfunc{Pf}\left( \left[
E_{1},E_{3}\right] \right) =\limfunc{Pf}\left( E_{4}\right) =1\neq 0$. Thus $%
\Delta _{o\,\#}\left( \left[ \widetilde{y}_{2}\right] \right) \in \mathsf{H}%
^{2}\left( A\right) $ is a nontrivial secondary characteristic class for%
\emph{\ }$\left( TM\oplus \limfunc{End}\left( \mathbb{R}^{2}\right)
,TM\oplus \limfunc{Sk}\left( \mathbb{R}^{2}\right) ,\limfunc{id}\right) $ of
even rank.
\end{proof}

\end{document}